\newtheorem{theorem}{Theorem}[section]
\newtheorem{lemma}{Lemma}[section]
\newtheorem{proposition}{Proposition}[section]
\newtheorem{remark}{Remark}[section]
\theoremstyle{definition}
\newtheorem{definition}{Definition}[section]
\numberwithin{equation}{section}
\journal{xxx}
\begin{document}

\begin{frontmatter}
\title{Local intricacy and average sample complexity for amenable group actions}

\author{Jinna Huang}
\ead{huangjinna63@gmail.com}
\address{Department of mathematics, Soochow University, Suzhou 215006, People's Republic of China}
\author{Zubiao Xiao\corref{cor1}}
\ead{xzb2020@fzu.edu.cn}
\cortext[cor1]{Corresponding author}
\address{School of Mathematics and Statistics, Fuzhou University, Fuzhou 350116, People's Republic of China}

%%%%%%%%%%%%%%%%%%%%%%%%%%%%%%
\begin{abstract}
Let $(X,G)$, $(Y,G)$ be two $G$-systems, where $G$ is an infinite countable discrete amenable group and $X$, $Y$ are compact metric spaces. Suppose that $\mathcal{U}$ is a cover of $X$. We first introduce the conditional local topological intricacy $\mathrm{Int}_\mathrm{top} (G,\mathcal{U}|Y)$ and average sample complexity $\mathrm{Asc}_\mathrm{top} (G,\mathcal{U}|Y)$. Given an invariant measure $\mu$ of $X$, we study the conditional local measure-theoretical intricacy $\mathrm{Int}_\mu^\pm(G,\mathcal{U}|Y)$ and average sample complexity $\mathrm{Asc}_\mu^\pm(G,\mathcal{U}|Y)$. For any F{\o}lner sequence $\{F_n\}_{n\in\mathbb{N}}$, we take  $\{c^{F_n}_S\}_{S\subseteq F_n}$ to be the uniform system of coefficients. We establish the equivalence of $\mathrm{Asc}_\mu^-(G,\mathcal{U}|Y)$ and $\mathrm{Asc}_\mu^+(G,\mathcal{U}|Y)$ when $G=\mathbb{Z}$. Furthermore, we verified that  $\mathrm{Asc}_\mu^-(G,\mathcal{U})$ is equal to $\mathrm{Asc}_\mu^+(G,\mathcal{U})$ in general case. Finally, we give a local variational principle of average sample complexity.
\end{abstract}

\begin{keyword}
amenable groups, local intricacy, local average sample complexity, local variational principle

\medskip
\MSC[2020]  37B99 $\cdot$ 54H15
\end{keyword}
\end{frontmatter}

%%% ----------------------------------------------------------------------
%%% ----------------------------------------------------------------------
%\tableofcontents
%%%%%%%%%%%%%%%%%%%%%%%%%%%%%%%%%%%%%%%%%%
%%%%%%%%%%%%%%%%%%%%%%%%%%%%%%%%%%%%%%%%%%
\section{Introduction}
In the study of \textit{high-level neural networks}, Edelman et al. \cite{tononi1994measure} introduced a quantitative measure which is called \textit{neural complexity}. Neural complexity is used to capture the interplay between two fundamental aspects of brain partition organization. Other works like \cite{eickhoff2018imaging, fornito2016fundamentals} also give further investigations. In \cite{buzzi2012approximate, buzzi2012mean}, Buzzi et al. placed neural complexity within a natural mathematical function. They found that this function satisfies exchangeability and sub-additivity. Moreover, they presented a unified mathematical expression for these functions, which they called \textit{intricacy}.

Based on Buzzi's results and the definition of entropy, Peterson et al. \cite{petersen2018dynamical} further investigated the intricacy in dynamical systems. They introduced new \textit{intricacy} and \textit{average sample complexity} in topological dynamical systems, which are closely similar to entropy. They found that the maximum value of the average sample complexity of covers is equivalent to the topological entropy.

Li et al. \cite{li2021dynamical} extended the average sample complexity and intricacy to the case of countable amenable groups based on Peterson's work. They defined the \textit{intricacy} and \textit{the average sample complexity} for amenable group actions. Through their research, they stated that many properties of intricacy defined by Peterson still hold with the assistance of F{\o}lner sequences of amenable groups. They also established the relationship between intricacy and the classical topological entropy. Yang et al. \cite{yang2022dynamical} defined random intricacy and average sample complexity by using random covers, and conclude that their suprema over open random covers are equal to the random topological entropy. In \cite{xiao2024pressure}, Xiao et al. proposed the concept of \textit{the pressure of intricacy and the average sample complexity} for amenable group actions and demonstrated its variational principle for general dynamical systems.

The local entropy concept are mentioned both in topological and measure-theoretical cases, as discussed in \cite{ye2007entropy} and related results can be found in \cite{adler1965topological,blanchard1997variation,bowen1975equilibrium,danilenko2001entropy,keller1998equilibrium,walters1982introduction}.  Assume that $(X,T)$ is a topological system, $\mathcal{U}$ is a cover and $\mu$ is an invariant measure. Romagnoli \cite{romagnoli2018local} gave the definition of two kinds of \textit{local measure-theoreticial entropy} $h_\mu^+(T,\mathcal{U})$ and $h_\mu^-(T,\mathcal{U})$. He proved that for each invariant measure $\mu_0$,  $h_\mathrm{top}(T,\mathcal{U})=\max_{\mu}h_{\mu_0}^+(T,\mathcal{U})$. In \cite{glasner2003ergodic}, Glanser et al. showed that for each invariant measure $\mu$, $h_\mathrm{top}(T,\mathcal{U})=\max_{\mu}h_{\mu_0}^-(T,\mathcal{U})$. Huang et al. \cite{huang2004entropy} showed that if there exist a system $(X,T)$ and an invariant measure $\mu$ with $h_\mu^+(T,\mathcal{U})> h_\mu^-(T,\mathcal{U})$ then there is a uniquely ergodic $\mathbb{Z}$-systems with the same property. Therefore, for each invariant
measure $\mu$, $h_\mu^-(T,\mathcal{U})=h_\mu^+(T,\mathcal{U})$. 

Huang et al. \cite{HUANG_YE_ZHANG_2006} introduced two definitions of measure-theoretical conditional entropy for covers of $\mathbb{Z}$-actions in 2006. Furthermore, they extended these definitions to amenable groups in \cite{huang2011local} and proved the equivalence between two types of measure-theoretical entropy.

Li et al. raised an open question: How does the construction of intricacy and average sample complexity apply to other type notions? Inspired by this question, this paper investigate local intricacy and average sample complexity for amenable group actions. We note that these definitions are well-defined and many properties still hold when compared to the local entropy. Due to the relationship between intricacy and average sample complexity, this paper will mainly investigate the properties of average sample complexity and its variational principles.

The outline of this paper is as follow. In Section \ref{sec2}, we give a review of the basical concept and propositions of amenable groups and the orbital theory, which will be necessary for the subsequent sections. In Section \ref{sec3}, we introduce the definitions of the conditional measure theoretical intricacy and average sample complexity of a cover $\mathcal{U}$. That is
$$\mathrm{Asc} _\mu ^-(G,\mathcal{U}|Y ):=\lim_{n \to \infty} \frac{1}{|F_n|}\sum_{S\subseteq {F_n}}c_S^{F_n}\inf_{\alpha \in \mathcal{P}(X),\alpha \succeq \mathcal{U}  }H_\mu(\mathcal{\alpha}_S|Y )\text{ and }$$

$$\mathrm{Asc} _\mu ^+(G,\mathcal{U}|Y ):=\inf _{\alpha \in \mathcal{P}(X),\alpha \succeq \mathcal{U}  }\lim_{n \to \infty} \frac{1}{|F_n|}\sum_{S\subseteq {F_n}}c_S^{F_n}H_\mu(G,\alpha|Y ).  $$
We shall show some propositions about these conditional average sample complexity. In Section \ref{sec4}, we discuss the equivalence of two kinds of measure-theoretic average sample complexity both under $\mathbb{Z}$-actions and amenable group actions, i.e. for each cover $\mathcal{U}$ and invariant measure $\mu$ of $X$,
$
\mathrm{Asc} _\mu ^-(G,\mathcal{U})=\mathrm{Asc} _\mu ^+(G,\mathcal{U}).
$
In Section \ref{sec5}, we give the local variational principle for the average sample complexity.
\section{Preliminary}\label{sec2}
\subsection{The amenable group}
In this subsection, we shall recall some notations and theorems about amenable groups from Ornstein and Weiss \cite{ornstein1987entropy}, which we shall use in the following sections. The proofs of some theorems can also be located in Chapter 4 of \cite{kerr2016ergodic}.

Let $\mathcal{F}(G)$ be the family of all finite non-empty subsets of $G$ and $|*|$ the cardinality of the set.

\begin{definition}\label{def2.1}Let $G$ be a countable discrete infinite group. 
	\begin{itemize}
		\item[(1)] $G$ is called \textit{amenable} if for each $K\in \mathcal{F}(G)$ and $\delta>0$, there exists $F\in \mathcal{F}(G)$ such that $\frac{|F\Delta KF|}{|F|}<\delta.$
		\item[(2)] Let $F$, $A\in\mathcal{F}(G)$ and $\varepsilon>0$.
		\begin{itemize}
			\item[(i)]  We denote \textit{$F$-boundary of $A$} by $\partial_FA:=\{s\in G:Fs\cap A\neq\emptyset\,and\,Fs\cap (G\setminus A)\neq\emptyset\}.$
			\item[(ii)] $A$ is \textit{$(F,\varepsilon)$-invariant} if $|\partial_FA|\leq\varepsilon|A|$.
		\end{itemize}
		\item[(3)] A sequence $\{F_n\}_{n\in\mathbb{N}}\subseteq\mathcal{F}(G)$  is called a \textit{F{\o}lner sequence} if for every $F\in\mathcal{F}(G)$ and $\varepsilon>0$, there exists $N\in\mathbb{N}$ such that $\{F_n\}_{n\in\mathbb{N}}$ is $(F,\varepsilon)$-invariant for all $n\geq N$.
	\end{itemize}
\end{definition}

For a more detailed explanation of amenable groups, one can see \cite{kerr2016ergodic,li2021dynamical,ornstein1987entropy,weiss2001monotileable}. In this paper, the groups that we consider are always assumed to be  countable discrete infinite amenable groups.

The following definitions and theorems are due to Ornstein and Weiss \cite{ornstein1987entropy}. 

\begin{definition}\label{def2.5}
	Let $G$ be a countable amenable group and $A\in\mathcal{F}(G)$. We say that a collection $\{A_i\}_{i\in I}\subseteq\mathcal{F}(G)$ \textit{$\lambda$-covers} $A$ if
	$$ \frac{\left | \bigcup_{i\in I}A_i  \right |}{\left | A \right | } \ge \lambda .$$
	$\{A_i\}_{i\in I}$ is \textit{$\varepsilon$-disjoint} if there are pairwise disjoint sets $B_i\subset A_i$ such that for any $i\in I$,
	$$|B_i|\geq(1-\varepsilon)|A_i|.$$
	For $\delta \in[0,1)$, we say $\left \{ A_1,\dots ,A_k \right \}$ $\delta$\textit{- even covers A} if 
	\begin{itemize}
		\item[(i)] $A_i\subseteq A$ for $i=1,\dots,k$,
		\item[(ii)] there exists $M>0 $ such that 
		$\sum\limits_{i=1}^{k} 1_{A_i}(g)\le M \text{ and }\sum\limits_{i=1}^{k}\left | A_i \right | \ge (1-\delta )M\left | A \right |.$
	\end{itemize}
\end{definition}

\begin{definition}\label{def2.6}
	Let $A\in\mathcal{F}(G)$ and $\varepsilon>0$. A finite collection $\{F_1,F_2,\cdots,F_n\}\subseteq\mathcal{F}(G)$  is said to \textit{$\varepsilon$-quasitile} $A$, if there exist $C_1,C_2,\cdots,C_n\in\mathcal{F}(G)$ such that
	\begin{itemize}
		\item[(i)] $\bigcup^n_{k=1}F_kC_k\subset A$ , $F_iC_i\cap F_jC_j=\emptyset$ for $1\leq i<j\leq n$;
		\item[(ii)] $\{F_kC_k:k=1,\cdots,n\}$ $(1-\varepsilon)$-covers A;
		\item[(iii)] For $k=1,2,\cdots,n$, $\{F_kc:c\in C_k\}$ is $\varepsilon$-disjoint.
	\end{itemize}
	The subsets $C_1,C_2,\cdots,C_n$ are called the \textit{tiling centers}.
\end{definition}

\begin{definition}\label{def2.7}
	A set function $\varphi:\mathcal{F}(G)\rightarrow\mathbb{R}$ is
	\begin{itemize}
		\item[(a)] \textit{monotone} if $\varphi(E)\leq\varphi(F)$ for any $E,\,F\in\mathcal{F}(G)$ with $E\subset F$;
		\item[(b)] \textit{non-negative} if $\varphi (E)\ge 0$ for any $E\in\mathcal{F}(G)$;
		\item[(c)] \textit{$G$-invariant} if $\varphi(Eg)=\varphi(E)$ for any $g\in G$ and $E\in\mathcal{F}(G)$;
		\item[(d)] \textit{sub-additive} if $\varphi(E\cup F)\leq\varphi(E)+\varphi(F)$ for any $E\text{, }F\in\mathcal{F}(G)$ with $E\cap F=\emptyset$.
	\end{itemize}
\end{definition}

\begin{theorem}\label{ow}(Ornstein-Weiss \cite{ornstein1987entropy})
	Suppose that $G$ is an amenable group. Let $\varphi:\mathcal{F}(G)\rightarrow[0,\infty)$ be a monotone, non-negative, $G$-invariant and sub-additive function (for short m.n.i.s.a.). Then there exists $\lambda=\lambda(G,\varphi)\in[0,+\infty)$ depended only on $G$ and $\varphi$ such that
	$$\lim\limits_{n\to \infty}\frac{\varphi(F_n)}{|F_n|}=\lambda,$$
	for all F{\o}lner sequences $\{F_n\}_{n\in\mathbb{N}}$ of $G$.
	
\end{theorem}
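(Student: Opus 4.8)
The plan is to reproduce the classical Ornstein--Weiss argument, whose heart is the quasitiling machinery packaged in Definitions~\ref{def2.5}--\ref{def2.6}. In fact I would prove the slightly stronger statement that for \emph{any} two F{\o}lner sequences $\{F_n\}_{n\in\mathbb{N}}$ and $\{F'_n\}_{n\in\mathbb{N}}$ of $G$,
$$\limsup_{n\to\infty}\frac{\varphi(F_n)}{|F_n|}\ \le\ \liminf_{m\to\infty}\frac{\varphi(F'_m)}{|F'_m|}.$$
Taking $\{F'_m\}=\{F_n\}$ then shows that the limit along every F{\o}lner sequence exists; feeding an arbitrary pair of F{\o}lner sequences back into the inequality forces all these limits to coincide with a single constant $\lambda$ depending only on $G$ and $\varphi$, and non-negativity of $\varphi$ gives $\lambda\ge 0$. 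So the whole theorem reduces to the displayed inequality.

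Two preliminary observations streamline the estimate. First, combining sub-additivity with monotonicity gives $\varphi(E\cup F)\le\varphi(E)+\varphi(F\setminus E)\le\varphi(E)+\varphi(F)$ for \emph{arbitrary} $E,F\in\mathcal{F}(G)$, so $\varphi$ is sub-additive on all finite unions; iterating over singletons and using $G$-invariance ($\varphi(\{g\})=\varphi(\{e\})$ for every $g$) yields a linear bound $\varphi(E)\le c_0|E|$ with $c_0:=\varphi(\{e\})$. Second, $G$-invariance means every right-translate of a set carries the same $\varphi$-value.

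Now fix $\varepsilon\in(0,\tfrac12)$. Pass to a subsequence of $\{F'_m\}$ along which $\varphi(F'_m)/|F'_m|$ converges to $\underline\lambda':=\liminf_m\varphi(F'_m)/|F'_m|$; this subsequence is again a F{\o}lner sequence, and after discarding an initial segment we may assume every term $T$ of it satisfies $\varphi(T)/|T|<\underline\lambda'+\varepsilon$. By the Ornstein--Weiss quasitiling theorem \cite{ornstein1987entropy} (see also \cite{kerr2016ergodic}), finitely many terms $A_1,\dots,A_N$ of this subsequence can be chosen so that $\{A_1,\dots,A_N\}$ $\varepsilon$-quasitiles every sufficiently invariant finite subset of $G$ in the sense of Definition~\ref{def2.6}. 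For $n$ large, $F_n$ is that invariant, so there are tiling centres $C_1,\dots,C_N$ with $\bigsqcup_{i=1}^{N}A_iC_i\subseteq F_n$, the blocks $A_iC_i$ pairwise disjoint and $(1-\varepsilon)$-covering $F_n$, and each family $\{A_ic:c\in C_i\}$ $\varepsilon$-disjoint. Applying sub-additivity across the disjoint blocks, then general sub-additivity within each block, $G$-invariance $\varphi(A_ic)=\varphi(A_i)$, the $\varepsilon$-disjointness inequality $\sum_{c\in C_i}|A_ic|\le(1-\varepsilon)^{-1}|A_iC_i|$, disjointness of the blocks (so $\sum_i|A_iC_i|\le|F_n|$), and the linear bound $c_0$ on the uncovered remainder (of size $\le\varepsilon|F_n|$), one obtains
$$\varphi(F_n)\ \le\ \sum_{i=1}^{N}|C_i|\,\varphi(A_i)+c_0\varepsilon|F_n|\ <\ \frac{\underline\lambda'+\varepsilon}{1-\varepsilon}\,|F_n|+c_0\varepsilon|F_n|.$$
Dividing by $|F_n|$, letting $n\to\infty$ and then $\varepsilon\to0$ gives $\limsup_n\varphi(F_n)/|F_n|\le\underline\lambda'$, which is exactly the inequality sought.

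The one genuinely hard input is the quasitiling theorem invoked above: that for each $\varepsilon$ there exist finitely many tiles, drawn from a prescribed F{\o}lner sequence and therefore with controlled value of $\varphi/|\cdot|$, which $\varepsilon$-quasitile all sufficiently invariant sets. This is the structural theorem of Ornstein and Weiss; its proof chooses the tiles at widely separated scales and, at each scale, uses a maximal $\varepsilon$-disjoint packing of translates of the current tile inside the target set, the invariance hypotheses guaranteeing that a definite proportion of what remains gets covered so that after boundedly many scales a $(1-\varepsilon)$-cover is produced. Granting this, everything else above is a bookkeeping computation with the four defining properties of $\varphi$.
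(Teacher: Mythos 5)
Your proposal is correct: it is the standard Ornstein--Weiss argument, with the quasitiling theorem invoked as the one deep input and the remaining estimate (sub-additivity across disjoint blocks, monotonicity-derived sub-additivity within blocks, $G$-invariance, the $\varepsilon$-disjointness factor $(1-\varepsilon)^{-1}$, and the linear bound $\varphi(E)\le\varphi(\{e\})|E|$ on the uncovered remainder) carried out correctly. The paper itself gives no proof --- it quotes the theorem from Ornstein--Weiss and points to Chapter 4 of Kerr--Li --- so your write-up simply reconstructs the referenced classical proof, which is entirely appropriate here.
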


The above limit theorem plays an important role in defining some dynamical invariants, such as the topological entropy and the intricacy of amenable groups in this paper.

\subsection{The neural complexity}

In \cite{tononi1994measure}, Tononi et al. introduced neural complexity in the context of high-level neural networks. To see what neural complexity means, we recall two classical definitions. 

\begin{definition}
	If $X$ and $Y$ are two random variables taking value in a finite space $E$ and $F$, respectively. We define \textit{the mutual information} between $X$ and $Y$ by
	$$
	I(X;Y):=H(X)+H(Y)-H(X,Y),
	$$
	where $$H(X)=-\sum_{x\in E}P(X=x)\log P(X=x),$$ $$H(X,Y)=-\sum_{x\in E, y\in F}P(X=x,Y=y)\log P(X=x,Y=y).$$
\end{definition}

Tononi et al.considered the system formed by a finite family of random variables $\mathcal{X} := (X_i)_{i\in E}$.
For any $S\subset E$, they divided the system into two separated parts $X_S=\{X_i:i\in S\}$, $X_{E\setminus S}:=\{X_i:i\in E\setminus S\}$.
\begin{definition}\label{def2.8}
	The \textit{neural complexity} is defined by
	$$
	\mathcal{I}(X_E):=\frac{1}{|E|+1}\sum_{S\subset E }\frac{1}{\binom{|E|}{|S|}}I(X_S;X_{E\setminus S}),
	$$
	where $I(X_\emptyset;X_{E})=I(X_{E};X_\emptyset)=0$.
\end{definition}

Tononi et al. \cite{tononi1994measure} used this notion to capture the interplay between the two fundamental aspects of brain organization.

Now we review the system of coefficients for $\mathbb{Z}$-actions and amenable group actions. One can refer to \cite{petersen2018dynamical,li2021dynamical} for details.
\begin{definition}
	Let $n^*=\{0,1,\cdots,n-1\}$ and $S=\{s_0,s_1,\cdots,s_{|S|-1}\}$ be a subset of $n^*$. A \textit{system of coefficients} for $\mathbb{Z}$-actions is defined to be a family of real numbers $\{c^n_S:S\subseteq n^*\}$ satisfying that $c^n_S\geq0$, $\sum_{S\subseteq n^*}c^n_S=1$ and $c^n_S=c^n_{X\setminus S}$, for all $n\in\mathbb{N}$ and $S\subseteq n^*$.
	
	More generally, if $G$ is a countable discrete group, a system of coefficients for $G$-actions is defined to be a family of numbers $\{c^A_S: S\subseteq A\}$ where $A\subseteq\mathcal{F}(G)$,  $c^A_S\geq0$, $\sum_{S\subseteq A}c^A_S=1$ and $c^A_S=c^A_{A\backslash S}$. 
\end{definition}

\begin{remark}
	\begin{itemize}
		\item[$\mathrm{(1)}$] $\left\{c^n_S:c^n_S=2^{-n},S\subseteq n^*\right\}$ and $\left\{c^A_S:c^A_S=2^{-|A|},{S\subseteq A}\right\}$ are the systems of coefficients. We call these two cases, respectively the uniform system of coefficients for $\mathbb{Z}$-actions and  $G$-actions.
		\item[$\mathrm{(2)}$] In this paper, we always assume that a system of coefficients for $G$-actions is \[c^A_S=\int_{[0,1]}x^{|S|}(1-x)^{|A|-|S|}\lambda(dx),\] where $\lambda$ is a symmetric probability measure on $[0,1]$ and some main theorems hold when $c^A_S=2^{-|A|}$.
		\item[$\mathrm{(3)}$] $\frac{1}{|A|+1}\frac{1}{\binom{|A|}{|S|}}$ is a special example of the system of coefficients.
	\end{itemize}
\end{remark}

\subsection{Orbital theory}
In this subsection, we shall introduce orbital theory, which can be referred to \cite{feldman1977ergodic,golodets1994classification} for more details. Let $(X,\mathcal{B}_X,\mu)$ be a Lebesgue space and $\mathcal{R}\subseteq X\times X$ a Borel equivalence relation on $X$. The group of $\mu$-preserving invertible transformations of $X$ is denoted by $Aut(X,\mu)$. We equip it with the (Polish) weak topology, which makes the following unitary representation continuous: \[Aut(X,\mu)\ni\gamma\mapsto U_\gamma\in\mathcal{U}(L^2(X,\mu)),\] where $U_\gamma f=f\circ\gamma^{-1}$ and $\mathcal{U}(L^2(X,\mu))$ is endowed with the strong operator topology.

Denote $\mathcal{R}(X):=\{y\in X:(x,y)\in \mathcal{R}\}$ for each $x\in X$. We say that $\mathcal{R}$ is \textit{of type I} if $|\mathcal{R}(x)|<+\infty$ for $\mu$-a.e. $x\in X$, that is, there exists $B\in\mathcal{B}_X$ such that $|B\cap\mathcal{R}(x)|=1$ for $\mu$-a.e. $x\in X$. We call $B$ a $\mathcal{R}$-\textit{fundamental domain}.

\begin{definition}
	Let $(X,\mathcal{B}_X,\mu)$ be a Lebesgue space and $\mathcal{R}\subseteq X\times X$ a Borel equivalence relation on $X$. We call $\mathcal{R}$ is
	\begin{itemize}
		\item[(a)] \textit{measure preserving} if it is generated by some countable sub-group $G\subseteq Aut(X,\mu)$ which is highly non-unique;
		\item[(b)] \textit{ergodic} if A belongs to the trivial  sub-$\sigma$-algebra of $\mathcal{B}_X$ when $A\in\mathcal{B}_X$ is $\mathcal{R}$-invariant (i.e.  $A=\bigcup_{x\in A}\mathcal{R}(X)$) ;
		\item[(c)] \textit{discrete} if $|\mathcal{R}(x)|\le |\mathbb{Z}|$ for $\mu$-a.e. $x\in X$;
		\item[(d)] \textit{hyperfinite} if there exists a sequence $R_1\subseteq R_2\subseteq \cdots$ of type I sub-relations of $\mathcal{R}$ with $\bigcup_{n\in\mathbb{N}} R_n = R$.
	\end{itemize}
\end{definition}

	Let $\mathcal{R}$ be measure preserving. We can observe that 
	$\mathcal{R}$ is countable if and only if it is conservative (i.e. $\mathcal{R}\cap (B\times B)\setminus \{(x,x):x\in X\}\neq \emptyset$ for each $B\in\mathcal{B}_X$ with $\mu(B)>0$ ). 
	 A measure preserving and discrete relation $\mathcal{R}$ is hyperfinite if and only if it is generated by a single transformation. 
	The orbit equivalence relation of a measure preserving action of a countable amenable group is hyperfinite.

The equivalence relations discussed in the following is measure preserving and discrete on $X$ where $(X,\mathcal{B}_X,\mu)$ is a Lebesgue space. Set
$$
[\mathcal{R}]:=\{\gamma\in Aut(X,\mu):(x,\gamma y)\in\mathcal{R}\text{ for } \mu\text{-}a.e.\text{ } x\in X\},
$$
$$
N[\mathcal{R}]:=\{\theta \in Aut(X,\mu):\theta\mathcal{R}(x)=\mathcal{R}(\theta x)\text{ for } \mu\text{-}a.e.\text{ } x\in X\}.
$$
\begin{definition}
	Let $A$ be a Polish group and $\gamma\in[\mathcal{R}]$. We call a Borel map $\phi:\mathcal{R}\to A$  cocycle if
	$\phi(x,z)=\phi(x,y)\phi(y,z)\text{ for all }(x,y),(y,z)\in\mathcal{R}$. Define
	\begin{itemize} 
		\item[(i)] A \textit{cocycle} $\phi\circ\theta$ by setting $\phi\circ\theta(x,y)=\phi(\theta x,\theta y)$ for all $(x,y)\in\mathcal{R}$;
		\item[(ii)] $\phi$-\textit{skew product extension} of $\mathcal{R}$ by
		$\mathcal{R}(\phi):=\{((x,y),(x',y'))\in (X\times Y)\times  (X\times Y):(x,x')\in\mathcal{R},y' = \phi(x, x')y\};$
		\item[(iii)] $\gamma_\phi$ by setting $\gamma(x,y)=(\gamma x,\phi(\gamma x,x)y)\text{ for each }(x, y)\in X\times Y$. $\gamma_\phi$ is called $\phi$-skew product extension of $\gamma$.
	\end{itemize}
\end{definition}

It is easy to show that $\mathcal{R}(\phi)$ is a measure preserving discrete equivalence relation on $(X\times Y, \mathcal{B}_X\times\mathcal{B}_Y,\mu\times\nu)$.

\subsection{$G$-systems}
\begin{definition}\label{def3.1}
	A pair $(X, G)$ is called a \textit{$G$-system} if $X$ is a compact metric space and $\psi:G\times X\rightarrow X$ is a continuous map with the following conditions:
	\begin{itemize}
		\item[(i)] $\psi(e_G,x)=x$ for every $x\in X$ and the identity $e_G\in G$;
		\item[(ii)] $\psi(g_1,\psi(g_2,x))=\psi(g_1g_2,x)$ for every $g_1,\, g_2\in G$ and $x\in X$.
	\end{itemize}
	In general, we write $gx:=\psi(g,x)$. Let $T:X\to X$ and $R:Y\to Y$ be two invertible measure-preserving transformations. When $G=\mathbb{Z}$, set $\psi(i,x)=T^ix$ for each $i\in\mathbb{Z}$ and $x\in X$. We denote $(X,G)$ and $(Y,G)$ by $(X,T)$ and $(Y,R)$. We call $(X,T)$ and $(Y,R)$ $\mathbb{Z}$\textit{-systems}.
\end{definition}

Let $(X, G)$ be a $G$-system. A \textit{cover} of $X$ is a family of Borel subsets of $X$ whose union is $X$. An \textit{open cover} of $X$ is a cover that consists of open sets. A \textit{partition} of $X$ is a cover of $X$ consisting of pairwise disjoint sets. Denote the set of finite covers by $\mathcal{C}_X$, the set of finite open covers by $\mathcal{C}^o_X$, and the set of finite partitions by $\mathcal{P}_X$. Given two covers of $\mathcal{U},\,\mathcal{V}\in\mathcal{C}_X$, $\mathcal{U}$ is said to be \textit{finer} than $\mathcal{V}$ (write $\mathcal{U}\succeq\mathcal{V}$) if each member of $\mathcal{U}$ is contained in some member of  $\mathcal{V}$. Let $\mathcal{U}\vee\mathcal{V}=\{U\cap V:U\in\mathcal{U},\,V\in\mathcal{V}\}$. Given $S\in\mathcal{F}(G)$ and $\mathcal{U}\in\mathcal{C}_X$, set $\mathcal{U}_S=\bigvee\limits_{g\in S}g^{-1}\mathcal{U}$ . 

Let $(X,G)$ and $(Y,G)$ be two $G$-systems. A onto map $\varphi :(X,G)\to (Y,G)$ is called a \textit{factor map} if $\varphi$ is a continuous and satisfies $\varphi g=g\varphi $ for $g\in G$. When $G=\mathbb{Z}$, we note that a onto map $\varphi :(X,T)\to (Y,R)$ is a factor map if $\varphi$ is a continuous and satisfies $\varphi R=T\varphi $.

\section{Topological and measure-theoretic intricacy }\label{sec3}
\subsection{Topological intricacy }

Let $\varphi :(X,G)\to (Y,G)$ be a factor map between two $G$-systems. For a subset $K\subseteq X$ and $\mathcal{U}$, $\mathcal{W}\in \mathcal{C} _X$, set $$
N\left ( \mathcal{U}|K  \right ) =\min\left \{ |\mathcal{W}|:\mathcal{W}\subseteq \mathcal{U}\text{ and }K\subseteq \bigcup_{W\in\mathcal{W}}W  \right \}\text{ and }N\left ( \mathcal{U} |\mathcal{W} \right ) =\sup_{W\in\mathcal{W}  } N\left ( \mathcal{U} |W \right )$$ $\left ( \text{letting } N(\mathcal{U}):=N(\mathcal{U}|\{X\}\right) )$.
For $y\in Y$, we define 
$$N(\mathcal{U}|y)=N(\mathcal{U}|\varphi^{-1} (y))\text{ and }N(\mathcal{U}|Y)=\sup_{y\in Y} N(\mathcal{U}|y).$$

%我们将intricacy函数中的$\frac{1}{|A|+1}\frac{1}{\biom{|A|}{|S|}}$替换成$c^A_S$，并且互信息函数替换为$ \log\left( \frac{N(\mathcal{U}_{S}|Y)N(\mathcal{U}_{F_n\setminus S}|Y)}{N(\mathcal{U}_{F_n}|Y)} \right)\text{and }$.
We replace the term $\frac{1}{|E|+1}\frac{1}{\binom{|E|}{|S|}}$ in Definition \ref{def2.8} with $c^E_S$, and substitute the mutual information function with $\log N(\mathcal{U}_{S}|Y)+\log N(\mathcal{U}_{E\setminus S}|Y)-\log N(\mathcal{U}_{E}|Y)$. Then we introduce the following definitions.
\begin{definition}\label{def3.1}
	Let $\left \{ F_n \right \} _{n\in\mathbb{N} }$ be a F\o lner sequence of $G$. For $\mathcal{U}\in \mathcal{C}_X$ and $F\in \mathcal{F}(G)$, we define the \textit{conditional topological intricacy} and \textit{average sample complexity} of $\mathcal{U}$ with respect to $(Y,G)$, respectively, by
	$$\mathrm{Int}_{\mathrm{top} }\left ( G,\mathcal{U}|Y \right ) :=\lim_{n \to \infty} \frac{1}{\left | F_n \right | }\sum_{S\subseteq F_n}c^{F_n}_S \log\left( \frac{N(\mathcal{U}_{S}|Y)N(\mathcal{U}_{F_n\setminus S}|Y)}{N(\mathcal{U}_{F_n}|Y)} \right),$$
	$$\mathrm{Asc}_{\mathrm{top} }\left ( G,\mathcal{U}|Y \right ) :=\lim_{n \to \infty} \frac{1}{\left | F_n \right | }\sum_{S\subseteq F_n}c^{F_n}_S \log N(\mathcal{U}_{S}|Y)  ,$$
	where $\mathrm{Int}_{\mathrm{top} }\left ( G,\mathcal{U} |Y \right )$ and $\mathrm{Asc}_{\mathrm{top} }\left ( G,\mathcal{U} |Y \right )$ are independent of the choice of F{\o}lner sequences.
	The \textit{conditional intricacy} and \text{average sample complexity} of $(X,G)$ with respect to $(Y,G)$ are defined, respectively, by
	$$\mathrm{Int}_{\mathrm{top} }\left ( G,X|Y  \right ) :=\sup _{\mathcal{U}\in\mathcal{C}_X^o  } \mathrm{Int}_{\mathrm{top} }\left ( G,\mathcal{U}|Y  \right )\text{, }\mathrm{Asc}_{\mathrm{top} }\left ( G,X|Y  \right ) :=\sup _{\mathcal{U}\in\mathcal{C}_X^o  } \mathrm{Asc}_{\mathrm{top} }\left ( G,\mathcal{U}|Y  \right ).$$
	
\end{definition}

\begin{remark}\label{rmk3.1}$\empty$
	\begin{itemize}
		\item[$\mathrm{(1)}$] Similar to the proof of Theorem 3.4 in \cite{xiao2024pressure}, we can observe that $F_n \mapsto \sum\limits_{S \subseteq F_n} c^{F_n}_S \log N(\mathcal{U}_S | Y)$ satisfies the conditions of Theorem \ref{ow}. Therefore, the definitions are well-defined.
		\item[$\mathrm{(2)}$] When $G=\mathbb{Z}$, we can also define the conditional topological intricacy and average sample complexity of $\mathcal{U}$ with respect to $(Y,R)$, respectively, by
		
		$$\mathrm{Int}_{\mathrm{top} }\left ( T,\mathcal{U}|Y \right ) :=\lim_{n \to \infty} \frac{1}{n }\sum\limits_{S\subseteq n^*}c^{n}_S\log\left( \frac{N(\mathcal{U}_{S}|Y)N(\mathcal{U}_{n^*\setminus S}|Y)}{N(\mathcal{U}_{n^*}|Y)} \right) ,$$
		
		$$\mathrm{Asc}_{\mathrm{top} }\left ( T,\mathcal{U}|Y \right ) :=\lim_{n \to \infty} \frac{1}{n }\sum_{S\subseteq n^*}c^{n}_S\log  N(\mathcal{U}_{S}|Y) . $$
		
		The conditional topological intricacy and average sample complexity of $(X,T)$ with respect to $(Y,R)$ are defined, respectively, by
		$$\mathrm{Int}_{\mathrm{top} }\left (T,X|Y  \right ) :=\sup\limits _{\mathcal{U}\in\mathcal{C}_X^o  } \mathrm{Int}_{\mathrm{top} }\left ( T,\mathcal{U}|Y  \right ), \mathrm{Asc}_{\mathrm{top} }\left (T,X|Y  \right ) :=\sup\limits _{\mathcal{U}\in\mathcal{C}_X^o  } \mathrm{Asc}_{\mathrm{top} }\left ( T,\mathcal{U}|Y  \right ).$$
		\item[$\mathrm{(3)}$] It is easy to obtain that 
		$\mathrm{Int}_{\mathrm{top} }\left ( G,\mathcal{U}|Y \right )=2\mathrm{Asc}_{\mathrm{top} }\left ( G,\mathcal{U}|Y \right )-h_\mathrm{top}\left ( G,\mathcal{U}|Y \right )$, where $h_\mathrm{top}\left ( G,\mathcal{U}|Y \right )=\lim\limits_{n\to\infty}\frac{1}{|F_n|}\log N(\mathcal{U}_{F_n}|Y)$.
		Therefore, for the topological case, the following sections will focus only on the average sample complexity.
		\item[$\mathrm{(4)}$] If $(Y,G)$ is a trivial system, the conditional average sample complexity is the same as the average sample complexity in \cite{petersen2018dynamical}. That is, $\mathrm{Asc}_{\mathrm{top} }\left ( G,\mathcal{U}|\{\emptyset, Y\}  \right ) =\mathrm{Asc}_{\mathrm{top} }\left ( G,\mathcal{U} \right ) $ and $\mathrm{Asc}_{\mathrm{top} }\left ( G,X|\{\emptyset, Y\}  \right ) =\mathrm{Asc}_{\mathrm{top} }\left ( G,X \right )$.
		
		%改成example 直接把例子单独写出来
	\end{itemize}
\end{remark}

\subsection{Measure-theoretic intricacy }

Denote the set of all Borel probability measures on $X$ by $\mathcal{M}(X)$, the set of all $G$-invariant Borel probability measures on $X$ by $\mathcal{M}(X,G)$ and the set of all ergodic measures by $\mathcal{M}^e(X,G)$. Note that $\mathcal{M}(X,G)\ne \emptyset$ when $G$ is an amenable group, and both $\mathcal{M}(X)$ and $\mathcal{M}(X,G)$ are convex compact metric spaces when they are endowed with weak $^*$-topology.

Let $(X,G)$ and $(Y,G)$ be two $G$-systems and $\pi :(X,G)\to(Y,G) $ is a factor map between two $G$-systems. Given 
$\alpha \in \mathcal{P} _X$, define 

$$H_{\mu }(\alpha |Y)=\sum_{A\in \alpha }\int_{X}-\mathbb{E} (1_A|\pi^{-1}(\mathcal{B}(Y)))\log  \mathbb{E} (1_A|\pi^{-1}(\mathcal{B}(Y)))d\mu, $$
where $\mathbb{E} (1_A|\pi^{-1}(\mathcal{B}(Y)))$ is the expectation of $1_A$ with respect to $\pi^{-1}(\mathcal{B}(Y))$. It is not hard to show that $H_{\mu }(\alpha |\pi^{-1}(\mathcal{B}(Y)))$ increases with respect to $\alpha$ and decreases with respect to $\pi^{-1}(\mathcal{B}(Y))$.

\begin{definition}\label{def3.2}
	Let $\mu \in \mathcal{M}(X,G) $. For $\alpha\in\mathcal{P}_X$, we define  the \textit{conditional measure-theoretic intricacy} and \textit{average sample complexity} of $\alpha$ with respect to $(Y,G)$, respectively, by
	$$\mathrm{Int}_{\mu }\left ( G,\alpha|Y \right ) :=\lim_{n \to \infty} \frac{1}{\left | F_n \right | }\sum_{S\subseteq F_n}c^{F_n}_S \left[H_\mu(\alpha_{S}|Y)+H_\mu(\alpha_{F_n\setminus S}|Y)-H_\mu(\alpha_{F_n}|Y) \right],$$
	$$\mathrm{Asc}_{\mu } (G,\alpha|Y ):=\lim_{n \to \infty} \frac{1}{|F_n|} \sum_{S\subseteq F_n}c^{F_n}_SH_\mu (\alpha _S|Y)\left (= \inf _{n\in \mathbb{N} }\frac{1}{|F_n|}  \sum_{S\subseteq F_n}c^{F_n}_SH_\mu (\alpha _S|Y) \right ).$$
	where $\mathrm{Int}_{\mu}\left ( G,\alpha |Y \right )$ and $\mathrm{Asc}_{\mu }\left ( G,\alpha |Y \right )$ are independent of the choice of F{\o}lner sequences.
	
	The conditional measure-theoretic intricacy and average sample complexity for $(X,G)$ with respect to $(Y,G)$ can be defined, respectively, by 
	$$\mathrm{Int}_{\mu } (G,X|Y):=\sup _{\alpha \in \mathcal{P}(X) }\mathrm{Int}_{\mu } (G,\alpha|Y )\text{, }\mathrm{Asc}_{\mu } (G,X|Y):=\sup _{\alpha \in \mathcal{P}(X) }\mathrm{Asc}_{\mu } (G,\alpha|Y ).$$
	
\end{definition}

\begin{remark}\label{rmk3.2}$\empty$
	\begin{itemize}
		\item[$\mathrm{(1)}$] Similar to the proof of \cite[Theorem 3.4]{xiao2024pressure}, we can check that the definition is well-defined.
		\item[$\mathrm{(2)}$] If $(Y,G)$ is a trivial system, the conditional measure-theoretical average sample complexity is the same as the definition in \cite{petersen2018dynamical}. That is, $\mathrm{Asc}_{\mu}\left ( G,\alpha|\{\emptyset, Y\}  \right ) =\mathrm{Asc}_{\mu }\left ( G,\alpha \right ) $ and $\mathrm{Asc}_{\mu}\left ( G,X|\{\emptyset, Y\}  \right ) =\mathrm{Asc}_{\mathrm{\mu} }\left ( G,X \right ) $ .
		\item[$\mathrm{(3)}$] Similar to the topological case, the following sections will also focus only on the measure-theoretic average sample complexity.
		\item[$\mathrm{(4)}$] We can find $\mu\in\mathcal{M}(X)$ and $\mathcal{U}\in\mathcal{C}_X$ such that $\mathrm{Asc}_\mu(T,\mathcal{U})<h_\mu(T,\mathcal{U})$. One can refer to \cite[Example 4.5]{petersen2018dynamical} for details.
		
	\end{itemize}
\end{remark}
Following the idea proposed by Romagnoli \cite{danilenko2001entropy}, we now introduce a new concept of the conditional $\mu$-measure-theoretical average sample complexity for covers. 

Let $\pi: (X,G) \to (Y,G)$ be a factor map between two $G$-systems and $\mu\in\mathcal{M}(X)$. For $\mathcal{U} \in \mathcal{C}_X$, we define
$H_\mu (\mathcal{U}|Y)=\inf\limits _{\alpha \in \mathcal{P}_X,\alpha \succeq \mathcal{U}}H_\mu(\alpha |\pi ^{-1}\mathcal{B}(Y)). $ The following lemma can be referred to \cite[Lemma 2.2]{HUANG_YE_ZHANG_2006} for a detailed proof.
%huang2011文献
\begin{lemma}\label{lem3.1}  If $\mathcal{U},\mathcal{V}\in\mathcal{C}_X$, then the following holds:
	
	\begin{itemize}
		\item[$\mathrm{(1)}$] $0\le H_\mu(\mathcal{U}|Y )\le \log N(\mathcal{U}|Y );$
		
		\item[$\mathrm{(2)}$] If $\mathcal{U}\succeq \mathcal{V}$ ,then $H_\mu (\mathcal{U}|Y)\ge H_\mu (\mathcal{V}|Y);$
		\item[$\mathrm{(3)}$] $\mathrm{Asc} _\mathrm{top} (\mathcal{U}\vee \mathcal{V} |Y )\le\mathrm{Asc} _\mathrm{top} (\mathcal{U}|Y )+\mathrm{Asc} _\mathrm{top} (\mathcal{V} |Y )$;
		
		\item[$\mathrm{(4)}$] $\mathrm{Asc} _\mathrm{top} (g^{-1}\mathcal{U} |Y )=\mathrm{Asc} _\mathrm{top} (\mathcal{U} |Y ) \text{ for all }g\in G.$
	\end{itemize}

\end{lemma}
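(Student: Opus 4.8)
The plan is to reduce each of the four assertions to the corresponding classical fact about $N(\cdot\,|Y)$ and $H_\mu(\cdot\,|Y)$, exploiting that $\mathrm{Asc}_{\mathrm{top}}(\cdot\,|Y)$ is built from $\log N(\cdot_S\,|Y)$ via the fixed system of coefficients $\{c_S^{F_n}\}$ and the Ornstein--Weiss limit of Theorem \ref{ow}. For (1), I would first observe the pointwise estimate $0\le H_\mu(\alpha\,|\pi^{-1}\mathcal{B}(Y))\le \log N(\alpha\,|Y)$ for any partition $\alpha\succeq\mathcal{U}$: the lower bound is clear since conditional entropy is non-negative, and the upper bound follows because on each fiber the conditional entropy of a partition into at most $N(\mathcal{U}|Y)$ atoms is at most $\log N(\mathcal{U}|Y)$; since one can choose a partition $\alpha\succeq\mathcal{U}$ with $|\alpha|=N(\mathcal{U}|Y)$ realizing the cover, taking the infimum over such $\alpha$ gives $H_\mu(\mathcal{U}|Y)\le\log N(\mathcal{U}|Y)$. (This is exactly \cite[Lemma 2.2]{HUANG_YE_ZHANG_2006}, so I would cite it and only sketch the argument.)

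For (2), if $\mathcal{U}\succeq\mathcal{V}$ then every partition $\alpha$ refining $\mathcal{U}$ also refines $\mathcal{V}$, so the infimum defining $H_\mu(\mathcal{U}|Y)$ is taken over a smaller set than the one defining $H_\mu(\mathcal{V}|Y)$, hence $H_\mu(\mathcal{U}|Y)\ge H_\mu(\mathcal{V}|Y)$; monotonicity of $H_\mu(\cdot\,|\pi^{-1}\mathcal{B}(Y))$ under refinement of the first argument is used here. For (3) and (4) the key input is the subadditivity $N(\mathcal{W}_1\vee\mathcal{W}_2\,|Y)\le N(\mathcal{W}_1|Y)\,N(\mathcal{W}_2|Y)$ and the identity $N(g^{-1}\mathcal{U}|Y)=N(\mathcal{U}|Y)$. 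For (4), since $(g^{-1}\mathcal{U})_S=\bigvee_{h\in S}h^{-1}g^{-1}\mathcal{U}=\mathcal{U}_{Sg}$ and $|Sg|=|S|$, a change of variables $S\mapsto Sg$ together with the translation-invariance of $N(\cdot\,|Y)$ identifies the two averages over subsets of $F_n$ up to a boundary term of size $o(|F_n|)$ (because $F_n$ and $F_ng$ differ in a Følner-small set), which vanishes in the limit; alternatively, and more cleanly, invoke that $F_n\mapsto\sum_{S\subseteq F_n}c_S^{F_n}\log N(\mathcal{U}_S|Y)$ is a m.n.i.s.a.\ function (Remark \ref{rmk3.1}(1)) and $G$-invariance of that set function gives (4) directly from Theorem \ref{ow}. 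For (3), I would show the finitary inequality $\sum_{S\subseteq F_n}c_S^{F_n}\log N((\mathcal{U}\vee\mathcal{V})_S|Y)\le\sum_{S\subseteq F_n}c_S^{F_n}\log N(\mathcal{U}_S|Y)+\sum_{S\subseteq F_n}c_S^{F_n}\log N(\mathcal{V}_S|Y)$ termwise, using $(\mathcal{U}\vee\mathcal{V})_S=\mathcal{U}_S\vee\mathcal{V}_S$ and the submultiplicativity of $N$, then divide by $|F_n|$ and pass to the limit.

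The main obstacle is the bookkeeping in (4): making sure the substitution $S\mapsto Sg$ inside $\sum_{S\subseteq F_n}$ is handled correctly, since $Sg$ need not be a subset of $F_n$. The honest route is to note that $\mathrm{Asc}_{\mathrm{top}}(\mathcal{U}|Y)$ is independent of the Følner sequence (established in Remark \ref{rmk3.1}(1) via Theorem \ref{ow}), so one may compute it along $\{F_ng\}_{n\in\mathbb{N}}$, which is again a Følner sequence; then $\sum_{S\subseteq F_ng}c_S^{F_ng}\log N(\mathcal{U}_S|Y)$ matched against $\sum_{S\subseteq F_n}c_S^{F_n}\log N((g^{-1}\mathcal{U})_S|Y)$ under $S\mapsto Sg$ gives equality on the nose, and both limits equal $\mathrm{Asc}_{\mathrm{top}}(\mathcal{U}|Y)$. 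Everything else is routine and parallels \cite[Lemma 2.2]{HUANG_YE_ZHANG_2006} and the standard properties of $N(\cdot\,|Y)$.
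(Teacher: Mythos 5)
Your overall strategy coincides with what the paper actually does: the paper offers no argument of its own and simply refers to \cite[Lemma 2.2]{HUANG_YE_ZHANG_2006}, and your sketches of (1)--(4) are the standard arguments behind that citation (monotonicity of the defining infimum for (2), termwise submultiplicativity $N(\mathcal{U}_S\vee\mathcal{V}_S|Y)\le N(\mathcal{U}_S|Y)N(\mathcal{V}_S|Y)$ for (3), translation invariance plus F{\o}lner-independence via Theorem \ref{ow} for (4)). Two details in your sketch are, however, stated incorrectly and should be repaired. First, in (1) you cannot in general ``choose a partition $\alpha\succeq\mathcal{U}$ with $|\alpha|=N(\mathcal{U}|Y)$'': any partition finer than $\mathcal{U}$ has at least $N(\mathcal{U})$ atoms, and $N(\mathcal{U}|Y)$ (a fiberwise count) can be strictly smaller than $N(\mathcal{U})$. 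The correct argument, which is the actual content of \cite[Lemma 2.2]{HUANG_YE_ZHANG_2006}, builds a measurable $\alpha\succeq\mathcal{U}$ such that for ($\nu$-a.e.) $y$ at most $N(\mathcal{U}|Y)$ atoms of $\alpha$ meet $\pi^{-1}(y)$, and then bounds $H_\mu(\alpha|\pi^{-1}\mathcal{B}(Y))=\int_Y H_{\mu_y}(\alpha)\,d\nu(y)\le\log N(\mathcal{U}|Y)$ fiberwise; the measurable fiberwise selection is where the work lies, so leaning on the citation here is the right call, but the one-atom-count sentence as written is false.

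Second, in (4) you have a left/right slip that matters for non-abelian $G$: with the paper's convention $\mathcal{U}_S=\bigvee_{h\in S}h^{-1}\mathcal{U}$ one has $(g^{-1}\mathcal{U})_S=\bigvee_{h\in S}(gh)^{-1}\mathcal{U}=\mathcal{U}_{gS}$, whereas it is $g^{-1}(\mathcal{U}_S)$ that equals $\mathcal{U}_{Sg}$. Consequently your proposed matching of $\sum_{S\subseteq F_n}c^{F_n}_S\log N((g^{-1}\mathcal{U})_S|Y)$ with $\sum_{S\subseteq F_ng}c^{F_ng}_S\log N(\mathcal{U}_S|Y)$ under $S\mapsto Sg$ does not give equality on the nose: that substitution produces $N(\mathcal{U}_{Sg}|Y)=N(g^{-1}(\mathcal{U}_S)|Y)=N(\mathcal{U}_S|Y)$, which is the right-invariance needed for the m.n.i.s.a.\ claim in Remark \ref{rmk3.1}, not the quantity $N(\mathcal{U}_{gS}|Y)$ you need. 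The fix is immediate: substitute $S\mapsto gS$ instead, note $c^{F_n}_S=c^{gF_n}_{gS}$ since the coefficients depend only on cardinalities, and compute along the F{\o}lner sequence $\{gF_n\}_{n\in\mathbb{N}}$ (which is again F{\o}lner), so that $\sum_{S\subseteq F_n}c^{F_n}_S\log N((g^{-1}\mathcal{U})_S|Y)=\sum_{S'\subseteq gF_n}c^{gF_n}_{S'}\log N(\mathcal{U}_{S'}|Y)$ and both normalized limits equal $\mathrm{Asc}_{\mathrm{top}}(G,\mathcal{U}|Y)$ by Theorem \ref{ow}. With these two corrections your proposal is sound and matches the paper's (citation-based) treatment.
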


\begin{definition}\label{def3.3}
	Let $\mu\in\mathcal{M}(X,G)$. Given $\mathcal{U}\in\mathcal{C}_X$, we define the $\mu^-$\textit{-conditional average sample complexity} of $\mathcal{U}$ with respect to $(Y,G)$  and $\mu^+$\textit{-conditional average sample complexity} of $\mathcal{U}$ with respect to $(Y,G)$, respectively, by
	$$\mathrm{Asc} _\mu ^-(G,\mathcal{U}|Y ):=\lim_{n \to \infty} \frac{1}{|F_n|}\sum_{S\subseteq {F_n}}c^{F_n}_SH_\mu(\mathcal{U}_S|Y )\left(=\inf _{n\ge 1}\frac{1}{|F_n|}\sum_{S\subseteq F_n}c^n_SH_\mu(\mathcal{U}_S|Y )\right),$$
	
	$$\mathrm{Asc} _\mu ^+(G,\mathcal{U}|Y ):=\inf _{\alpha \in \mathcal{P}(X),\alpha \succeq \mathcal{U}  }\mathrm{Asc}_\mu(G,\alpha|Y ).  $$
	where $\mathrm{Asc}_{\mu }^-\left ( G,\mathcal{U} |Y \right )$ and $\mathrm{Asc}_{\mu }^+\left ( G,\mathcal{U} |Y \right )$ are independent of the choice of F{\o}lner sequences.
	
	We define $\mu^-$\textit{-conditional average sample complexity} of $(X,G)$ with respect to $(Y,G)$ and $\mu^+$\textit{-conditional average sample complexity} of $(X,G)$ with respect to $(Y,G)$, respectively, by
	$$\mathrm{Asc} _\mu ^-(G,X|Y ):=\sup_{\mathcal{U}\in\mathcal{C}_X}\mathrm{Asc} _\mu ^-(G,\mathcal{U}|Y )\text{ , }\mathrm{Asc} _\mu ^+(G,X|Y ):=\sup_{\mathcal{U}\in\mathcal{C}_X}\mathrm{Asc} _\mu ^+(G,\mathcal{U}|Y ).$$
	
\end{definition}

In this subsection, we aim to present some propositions with regard to two types of conditional local average sample complexity of covers.
\begin{proposition}\label{prop3.2}
	Let $\pi:(X,T)\to(Y,R)$ and $\varphi:(Z,S)\to(X,T)$ be two factor maps. If $\nu\in\mathcal{M}(Z,S)$, $\mu=\varphi\nu$ and $\mathcal{U}\in\mathcal{C}_X$, then 
	\begin{itemize}
		\item[$\mathrm{(1)}$] $\mathrm{Asc}_\mu ^-(T,\mathcal{U}|Y )\le \mathrm{Asc}_\mu ^+(T,\mathcal{U}|Y )$;
		
		\item[$\mathrm{(2)}$] $\mathrm{Asc}_\mathrm{top}  (S,\varphi ^{-1}\mathcal{U} |Y)=\mathrm{Asc}_\mathrm{top}  (T,\mathcal{U} |Y)$;
		\item[$\mathrm{(3)}$] $\mathrm{Asc}_\nu  (S,\varphi^{-1}\mathcal{U} |Y)=\mathrm{Asc}_\mu  (T, \mathcal{U} |Y)$;  \end{itemize}
		If the systems of coefficients in the following $\mathrm{Asc}_\mu ^-(T,\mathcal{U}|Y )$, $ \mathrm{Asc}_\mu ^+(T^n,\mathcal{U}_S|Y )$ and $\mathrm{Asc}_{\mathrm{top}}(T,\mathcal{U}|Y )$ are uniform, then we have
		\begin{itemize}\item[$\mathrm{(4)}$]  $\mathrm{Asc}_\mu ^-(T,\mathcal{U}|Y )=\lim\limits_{n\to\infty}\frac{1}{n}\sum\limits_{0\in S\subseteq n^*}\frac{1}{2^{n-1}}H_\mu(\mathcal{U}_S|Y)$;
		\item[$\mathrm{(5)}$] $\mathrm{Asc}_\mu ^-(T,\mathcal{U}|Y )=\frac{1}{M}\sum\limits_{0\in B\subseteq M^*} \frac{1}{2^{M-1}}\mathrm{Asc}_\mu ^-(T^M,\mathcal{U}_B|Y )$ for each $M\in \mathbb{N}$;
		\item[$\mathrm{(6)}$]   $\mathrm{Asc}_\mu ^-(T,\mathcal{U}|Y )=\lim\limits_{n \to \infty} \frac{1}{n}\sum\limits_{0\in S\subseteq n^*}\frac{1}{2^{n-1}} \mathrm{Asc}_\mu ^+(T^n,\mathcal{U}_S|Y )$;
		\item[$\mathrm{(7)}$]  $\mathrm{Asc}_{\mathrm{top}}(T,\mathcal{U}|Y )=\frac{1}{M}\sum\limits_{0\in B\subseteq M^*} \frac{1}{2^{M-1}}\mathrm{Asc}_{\mathrm{top}}(T^M,\mathcal{U}_B|Y )$ for each $M\in \mathbb{N}$.
	\end{itemize}
	
\end{proposition}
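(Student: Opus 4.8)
The plan is to dispatch (1)--(3) directly from the definitions, prove (4) by a short recursion, and treat (5)--(7) as ``power rules'' that all run through one block-decomposition argument. For (1): if $\alpha\in\mathcal{P}_X$ and $\alpha\succeq\mathcal{U}$, then $\alpha_S\succeq\mathcal{U}_S$ for every $S\in\mathcal{F}(G)$, so Lemma~\ref{lem3.1}(2) gives $H_\mu(\mathcal{U}_S|Y)\le H_\mu(\alpha_S|Y)$; multiplying by $c^{F_n}_S$, summing over $S\subseteq F_n$, dividing by $|F_n|$ and letting $n\to\infty$ yields $\mathrm{Asc}_\mu^-(T,\mathcal{U}|Y)\le\mathrm{Asc}_\mu(T,\alpha|Y)$, and the infimum over $\alpha$ gives (1). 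For (2)--(3) I would use two elementary facts: first, $(\varphi^{-1}\mathcal{U})_S=\varphi^{-1}(\mathcal{U}_S)$, which follows from $\varphi g=g\varphi$ (hence $g^{-1}\varphi^{-1}=\varphi^{-1}g^{-1}$) and the fact that $\varphi^{-1}$ commutes with $\vee$; second, for a factor map $\varphi$, a cover $\mathcal{V}$ of $X$ and $K\subseteq X$, the identity $N(\varphi^{-1}\mathcal{V}\mid\varphi^{-1}K)=N(\mathcal{V}\mid K)$, since a subfamily of $\mathcal{V}$ covers $K$ exactly when the corresponding subfamily of $\varphi^{-1}\mathcal{V}$ covers $\varphi^{-1}K$ (here surjectivity of $\varphi$ is used). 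Combining these gives $N((\varphi^{-1}\mathcal{U})_S\mid Y)=N(\mathcal{U}_S\mid Y)$ for all $S$, whence (2). For (3), with $\mu=\varphi\nu$ and $\mathcal{A}=\pi^{-1}\mathcal{B}(Y)$, the pushforward identity $\mathbb{E}_\nu(1_{\varphi^{-1}A}\mid\varphi^{-1}\mathcal{A})=\mathbb{E}_\mu(1_A\mid\mathcal{A})\circ\varphi$ together with the change-of-variables formula for $\varphi$ gives $H_\nu((\varphi^{-1}\alpha)_S\mid Y)=H_\mu(\alpha_S\mid Y)$ for every partition $\alpha$ and every $S$, and summing against $c^{F_n}_S$ as before yields (3).

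For (4), take the uniform coefficients and set $A_n=\sum_{S\subseteq n^*}2^{-n}H_\mu(\mathcal{U}_S|Y)$ and $C_n=\sum_{0\in S\subseteq n^*}2^{-n}H_\mu(\mathcal{U}_S|Y)$. Splitting $A_n$ according to whether $0\in S$ and re-indexing each $S\subseteq\{1,\dots,n-1\}$ as $S=S'+1$ with $S'\subseteq(n-1)^*$ — using $\mathcal{U}_{S'+1}=T^{-1}\mathcal{U}_{S'}$ and the shift-invariance $H_\mu(T^{-1}\mathcal{U}_{S'}|Y)=H_\mu(\mathcal{U}_{S'}|Y)$, valid because $\mu$ and $\pi^{-1}\mathcal{B}(Y)$ are $T$-invariant (cf.\ Lemma~\ref{lem3.1}(4)) — gives the recursion $A_n=C_n+\tfrac12 A_{n-1}$. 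Since $A_n/n\to L:=\mathrm{Asc}_\mu^-(T,\mathcal{U}|Y)$, also $A_{n-1}/n\to L$, so $C_n/n\to\tfrac12 L$, i.e.\ $\tfrac1n\sum_{0\in S\subseteq n^*}\tfrac1{2^{n-1}}H_\mu(\mathcal{U}_S|Y)=2C_n/n\to L$, which is (4).

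For (5)--(7), fix $M$ (or $n$) and, using that all the quantities involved are Følner-independent (Theorem~\ref{ow}; cf.\ Remarks~\ref{rmk3.1}, \ref{rmk3.2}), work with the Følner set $(Mm)^*$ for $\langle T\rangle$. It splits into $m$ consecutive blocks $I_j=\{Mj,Mj+1,\dots,Mj+M-1\}$, $j=0,\dots,m-1$, so a subset $S\subseteq(Mm)^*$ corresponds bijectively to a tuple $(B_0,\dots,B_{m-1})$ with $B_j\subseteq M^*$ via $S\cap I_j=Mj+B_j$, and then $\mathcal{U}_S=\bigvee_{j=0}^{m-1}(T^M)^{-j}\mathcal{U}_{B_j}$. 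Substituting this into the sum defining $\mathrm{Asc}_\mu^-(T,\mathcal{U}|Y)$, and on the other side expanding each $\mathrm{Asc}_\mu^-(T^M,\mathcal{U}_B|Y)$ over the Følner set $m^*$ of $\langle T^M\rangle$ (where $\bigvee_{a\in A}(T^M)^{-a}\mathcal{U}_B=\mathcal{U}_{MA+B}$), one reduces (5) to a combinatorial identity between two weighted averages of the numbers $H_\mu(\mathcal{U}_F|Y)$, $F\in\mathcal{F}(\mathbb{Z})$, which is then settled using (4), the shift-invariance, and the sub-additivity $H_\mu(\mathcal{V}\vee\mathcal{W}|Y)\le H_\mu(\mathcal{V}|Y)+H_\mu(\mathcal{W}|Y)$ together with monotonicity. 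Part (7) is the verbatim topological analogue, with $H_\mu(\cdot|Y)$ replaced by $\log N(\cdot|Y)$ and Lemma~\ref{lem3.1}(2)--(4) used in place of the entropy estimates; part (6) follows by combining (5) (with $M$ in the role of $n$), part (1), and the fact that $\tfrac1n\,\mathrm{Asc}_\mu^+(T^n,\mathcal{U}_S|Y)$ is squeezed between $H_\mu$-type quantities, so that the averaged right-hand side again collapses to $L$.

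The hard part will be this last comparison in (5)--(7): $\mathrm{Asc}_\mu^-(T^M,\mathcal{U}_B|Y)$ encodes $T^M$-towers built from one fixed cover $\mathcal{U}_B$, whereas the window $(Mm)^*$ produces $T^M$-towers $\bigvee_j(T^M)^{-j}\mathcal{U}_{B_j}$ whose block sets $B_j$ vary freely with $j$; getting the normalizing constants to line up exactly, and justifying the interchange of the limit in $m$ with the finite averages over $B$ (respectively with the infima over $\alpha\succeq\mathcal{U}_S$ defining $\mathrm{Asc}_\mu^+$ in (6)), is where the real work lies. A secondary point to watch is that $\mathrm{Asc}_\mu$ is defined on partitions, so in (3) one reads $\mathcal{U}$ as a partition (and, if wanted, extends to covers through $H_\mu(\cdot|Y)$).
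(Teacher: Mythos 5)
Your handling of (1)--(4) is correct and consistent with the paper: (1)--(3) are the routine verifications the paper dismisses as easy, and for (4) you actually supply the recursion $A_n=C_n+\tfrac12A_{n-1}$ (splitting on whether $0\in S$ and shifting), which is a clean, self-contained proof of the statement the paper simply cites from Petersen--Wilson. Likewise, your route to (6) --- combine (5), part (1) applied to $T^M$, the bound $\mathrm{Asc}_\mu^+(T^M,\mathcal{U}_B|Y)\le H_\mu(\mathcal{U}_B|Y)$, and then squeeze using (4) as $M\to\infty$ --- is exactly the paper's argument for (6).

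The genuine gap is in (5) (hence also (7), and (6) inherits it since your (6) presupposes (5)). You reduce (5) to ``a combinatorial identity between two weighted averages'' via the block decomposition of $(Mm)^*$, and then explicitly defer that identity as ``where the real work lies'' --- but that comparison \emph{is} the content of (5), and it is never carried out. Concretely, expanding the left side along $(Mm)^*$ (via (4)) averages $H_\mu(\mathcal{U}_S|Y)$ over \emph{all} $S\subseteq(Mm)^*$ with $0\in S$, i.e.\ towers $\bigvee_{j}(T^M)^{-j}\mathcal{U}_{B_j}$ with the blocks $B_j\subseteq M^*$ varying independently, whereas the right side of (5) involves only product-form sets $MA+B$ with a single fixed $B$. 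The tools you invoke do not close this: sub-additivity gives $H_\mu\bigl(\mathcal{U}_{\bigcup_j(Mj+B_j)}|Y\bigr)\le\sum_j H_\mu(\mathcal{U}_{B_j}|Y)$, which destroys the tower structure and cannot produce the exact constant $\frac1M\sum_{0\in B\subseteq M^*}2^{-(M-1)}\mathrm{Asc}_\mu^-(T^M,\mathcal{U}_B|Y)$, while monotonicity only yields one-sided estimates; no argument is offered for the reverse inequality or for interchanging the $m$-limit with the averages over $B$ (and, in (6), with the infima defining $\mathrm{Asc}_\mu^+$). The paper's own proof of (5) proceeds differently: it applies (4) with $n=MN$ and re-indexes the sum directly over the sets $SM+B$, $0\in S\subseteq N^*$, $0\in B\subseteq M^*$, with reweighted coefficients $2^{-(M+N-2)}$, so that each inner sum over $S$ converges to $\mathrm{Asc}_\mu^-(T^M,\mathcal{U}_B|Y)$ (admittedly the paper states this re-indexing tersely, so your instinct that a real comparison is needed here is fair --- but your proposal still leaves it unproved). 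To complete your route you would need an exchangeability/symmetrization argument in the spirit of Buzzi--Zambotti (or the corresponding power-rule lemma of Petersen--Wilson) showing the full average over varying blocks coincides asymptotically with the fixed-block one; the same missing step recurs verbatim in (7) with $\log N(\cdot|Y)$ in place of $H_\mu(\cdot|Y)$.
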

\begin{proof}
	(1), (2) and (3) are easy to prove. The proof of (4) can be refer to \cite[Proposition 2.5]{petersen2018dynamical}. So we shall prove (5) and (6). By the definition of $\mathrm{Asc}_\mu ^-(T,\mathcal{U}|Y )$, we have
	%验证（2）是不是对的
	$$
	\begin{aligned}
		&\mathrm{Asc}_\mu ^-(T,\mathcal{U}|Y )=\lim_{N\to\infty}\frac{1}{MN}\sum_{0\in L\subseteq (MN)^*}\frac{1}{2^{MN-1}}H_\mu(\mathcal{U}_L|Y)\\
		=&\frac{1}{M}\lim_{N\to\infty}\frac{1}{N}\frac{1}{2^{MN-1}}\sum_{0\in S\subseteq N^*}\sum_{0\in B\subseteq M^*}\frac{2^{MN-1}}{2^{M+N-2}}H_\mu(\mathcal{U}_{SM+B}|Y)=\frac{1}{M}\sum_{0\in B\subseteq M^*} \frac{1}{2^{M-1}}\mathrm{Asc}_\mu ^-(T^M,\mathcal{U}_B|Y ).
	\end{aligned}
	$$
	We prove (6). For any $M\in\mathbb{N}$,
	$$
	\begin{aligned}
		\mathrm{Asc}_\mu ^-(T,\mathcal{U}|Y )&=\frac{1}{M}\sum_{0\in B\subseteq M^*} \frac{1}{2^{M-1}}\mathrm{Asc}_\mu ^-(T^M,\mathcal{U}_B|Y )\le \frac{1}{M}\sum_{0\in B\subseteq M^*} \frac{1}{2^{M-1}}\mathrm{Asc}_\mu ^+(T^M,\mathcal{U}_B|Y ) \\
		&\le \frac{1}{M}\sum_{0\in B\subseteq M^*} \frac{1}{2^{M-1}}H_\mu(\mathcal{U}_B|Y).
	\end{aligned} 
	$$
	Letting $M\to\infty$, we have $\mathrm{Asc}_\mu ^-(T,\mathcal{U}|Y )=\lim\limits_{M \to \infty} \frac{1}{M}\sum\limits_{0\in B\subseteq M^*}\frac{1}{2^{M-1}} \mathrm{Asc}_\mu ^+(T^M,\mathcal{U}_B|Y )$. 
	
	The proof of (7) is similar to (4).
	
\end{proof}

\begin{proposition}\label{equation}
	Let $(X,G)$ and $(Y,G)$ be two $G$-systems. Assume that $d>0$ is a fixed integrate number, for each $\varepsilon>0$, there exists $\delta>0$ such that if $\mathcal{U}=\{U_1,\dots,U_d\}$ and $\mathcal{V}=\{V_1,\dots,V_d\}$ are any two covers of $X$ with $\sum\limits_{i=1}^d\mu(U_i\bigtriangleup V_i)<\delta$ then 
	$$
	\left|  \mathrm{Asc}_\mu ^+(G,\mathcal{V}|Y)-\mathrm{Asc}_\mu ^+(G,\mathcal{U}|Y)\right|<\varepsilon.
	$$
\end{proposition}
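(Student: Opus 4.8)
The plan is to reduce the statement to a quantitative continuity estimate for the conditional entropy $H_\mu(\cdot\,|Y)$ on partitions with a bounded number of atoms, and then transfer it through the infimum defining $\mathrm{Asc}_\mu^+$. First I would recall the classical fact (the conditional analogue of the estimate used in \cite{romagnoli2018local,huang2011local}): if $\alpha=\{A_1,\dots,A_d\}$ and $\beta=\{B_1,\dots,B_d\}$ are two partitions with $\sum_{i=1}^d\mu(A_i\triangle B_i)$ small, then $|H_\mu(\alpha|Y)-H_\mu(\beta|Y)|$ is small, with a modulus of continuity depending only on $d$ (not on the $\sigma$-algebra we condition on, since conditioning can only decrease the Rokhlin-type distance). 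The natural way to see this is via the common refinement $\alpha\vee\beta$: one controls $H_\mu(\alpha\vee\beta|Y)-H_\mu(\alpha|Y)=H_\mu(\beta|Y,\alpha)\le H_\mu(\beta|\alpha)$ by the unconditioned estimate $H_\mu(\beta|\alpha)\le\sum_i\mu(A_i)\,h\!\left(\tfrac{\mu(A_i\triangle B_i)}{\mu(A_i)}\right)+(\text{small})$, which tends to $0$ as $\sum_i\mu(A_i\triangle B_i)\to 0$ uniformly over $d$-atom partitions; symmetrically for $\alpha$.

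Next I would pass from covers to partitions. Given covers $\mathcal{U}=\{U_1,\dots,U_d\}$ and $\mathcal{V}=\{V_1,\dots,V_d\}$ with $\sum_i\mu(U_i\triangle V_i)<\delta$, the key observation is that every partition $\alpha\succeq\mathcal{U}$ can be matched with a partition $\beta\succeq\mathcal{V}$ that is close to it in the above sense. Concretely, if $\alpha=\{A_1,\dots,A_k\}$ with each $A_j$ contained in some $U_{i(j)}$, I would set $\beta$ to be the partition generated by the sets $A_j\cap V_{i(j)}$ together with a remainder atom $X\setminus\bigcup_j(A_j\cap V_{i(j)})$; then $\beta\succeq\mathcal{V}$ and $\mu$ of the symmetric differences between corresponding atoms is controlled by $\sum_i\mu(U_i\triangle V_i)<\delta$. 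Applying the continuity estimate of the previous paragraph gives $H_\mu(\beta_S|Y)\le H_\mu(\alpha_S|Y)+\rho(\delta)$ for each $S\subseteq F_n$, where $\rho(\delta)\to 0$ and $\rho$ depends only on $d$ (here one uses that $\alpha_S$ and $\beta_S$, being joins over $S$ of matched pairs, again satisfy a symmetric-difference bound controlled by $|S|\cdot\delta$ — so one must be a little careful and instead estimate $H_\mu(\beta_S|Y)-H_\mu(\alpha_S|Y)\le H_\mu(\beta_S|\alpha_S)\le\sum_{g\in S}H_\mu(g^{-1}\beta|g^{-1}\alpha)=|S|\,H_\mu(\beta|\alpha)$, and then the $c_S^{F_n}$-weighted sum of $|S|$, divided by $|F_n|$, is bounded by a constant independent of $n$, by the coefficient normalization $\sum_S c_S^{F_n}=1$ and symmetry). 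Summing against $\{c_S^{F_n}\}$, dividing by $|F_n|$ and letting $n\to\infty$ yields $\mathrm{Asc}_\mu(G,\beta|Y)\le\mathrm{Asc}_\mu(G,\alpha|Y)+C\rho(\delta)$ for a constant $C$, hence $\mathrm{Asc}_\mu^+(G,\mathcal{V}|Y)\le\mathrm{Asc}_\mu^+(G,\mathcal{U}|Y)+C\rho(\delta)$ after taking the infimum over $\alpha$. By symmetry of the hypothesis in $\mathcal{U}$ and $\mathcal{V}$, the reverse inequality holds as well, and choosing $\delta$ so that $C\rho(\delta)<\varepsilon$ completes the proof.

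The main obstacle I anticipate is exactly the bookkeeping in the middle step: one wants the final error to depend only on $d$ and $\delta$, not on the (unbounded) number of atoms $k$ of the competing partition $\alpha$, nor on $|S|$ or $n$. This is why it is essential to bound $H_\mu(\beta_S|Y)-H_\mu(\alpha_S|Y)$ by the \emph{super-additive-in-}$S$ quantity $|S|\,H_\mu(\beta|\alpha)$ rather than by applying the $d$-atom continuity estimate directly to $\alpha_S$ (whose number of atoms blows up), and then to observe that $H_\mu(\beta|\alpha)$ is itself small — uniformly in $\alpha$ — because $\beta$ refines $\alpha$ only through the single coordinate of the $V_i$'s and $\sum_i\mu(U_i\triangle V_i)<\delta$. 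Once this uniformity is isolated, the remaining manipulations (moving the estimate through $\sum_S c_S^{F_n}$, through $\frac1{|F_n|}$, through $\lim_{n\to\infty}$, and finally through $\inf_\alpha$) are routine and use only the normalization properties of the system of coefficients and the subadditivity/monotonicity already recorded in Lemma \ref{lem3.1}.
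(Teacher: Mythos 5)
Your proposal takes the same route the paper itself gestures at: the paper's proof is only a citation of the Rokhlin-metric continuity estimate (Walters, Theorem 4.15), and what you wrote is that estimate carried out in the conditional setting together with the cover-to-partition matching needed to push it through the infimum defining $\mathrm{Asc}_\mu^+$; the key points (bounding $H_\mu(\beta_S|Y)-H_\mu(\alpha_S|Y)$ by $|S|\,H_\mu(\beta|\alpha)$, averaging against $c^{F_n}_S/|F_n|$, and symmetry in $\mathcal{U},\mathcal{V}$) are all correct. One step needs a small repair: the partition $\beta$ you build from the sets $A_j\cap V_{i(j)}$ together with the \emph{single} remainder atom $R=X\setminus\bigcup_j\bigl(A_j\cap V_{i(j)}\bigr)$ need not refine $\mathcal{V}$, since $R$ need not lie inside any one $V_i$, and $\beta\succeq\mathcal{V}$ is exactly what lets you compare $\mathrm{Asc}_\mu(G,\beta|Y)$ with $\mathrm{Asc}_\mu^+(G,\mathcal{V}|Y)$. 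The fix is immediate: split $R$ subordinately to $\mathcal{V}$ (say $R\cap V_1$, $R\cap V_2\setminus V_1$, etc.), so that each atom $A_j$ of $\alpha$ is cut by $\beta$ into at most $d+1$ pieces, one of relative measure $1-\epsilon_j$ with $\epsilon_j=\mu(A_j\setminus V_{i(j)})/\mu(A_j)$ and the rest of total relative measure $\epsilon_j$; then with $h(t)=-t\log t-(1-t)\log(1-t)$, concavity and $\sum_j\mu(A_j)\epsilon_j<\delta$ give $H_\mu(\beta|\alpha)\le\sum_j\mu(A_j)\bigl(h(\epsilon_j)+\epsilon_j\log d\bigr)\le h(\delta)+\delta\log d$, which is precisely the uniform-in-$\alpha$ smallness your argument needs, and the rest of your proof goes through unchanged.
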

\begin{proof}
	The proof of this lemma is similar to \cite[Theorem 4.15]{walters1982introduction}.
\end{proof}

The following lemma is used to prove theorems in Subsection 3.4.
The proof of the following lemma can be refer to \cite{huang2004entropy}.
\begin{lemma}\label{lem3.2}
	Let $(X,G)$ be a $G$-system, $\mu\in\mathcal{M}(X)$ and $\alpha\in\mathcal{P}(X)$. If $\{\mathcal{A}_n\}_{n\in\mathbb{N}}$ is an increasing sequence of measurable sub-$\sigma$-algebras with $\mathcal{A}_n\nearrow\mathcal{A}$, then $H_\mu (\alpha |\mathcal{A}_n)\nearrow  H_\mu (\alpha |\mathcal{A})$.
\end{lemma}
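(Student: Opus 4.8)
The plan is to deduce the statement from the increasing martingale convergence theorem together with bounded convergence. Write $\phi\colon[0,1]\to[0,e^{-1}]$ for the continuous concave function $\phi(t)=-t\log t$ with the convention $\phi(0)=0$, so that for any sub-$\sigma$-algebra $\mathcal{C}$ of $\mathcal{B}_X$ one has $H_\mu(\alpha|\mathcal{C})=\sum_{A\in\alpha}\int_X\phi\big(\mathbb{E}(1_A|\mathcal{C})\big)\,d\mu$, a finite sum since $\alpha\in\mathcal{P}(X)$. For the monotonicity of the sequence $\{H_\mu(\alpha|\mathcal{A}_n)\}_{n\in\mathbb{N}}$, fix $m\le n$; as $\mathcal{A}_m\subseteq\mathcal{A}_n$, the tower property gives $\mathbb{E}\big(\mathbb{E}(1_A|\mathcal{A}_n)\,\big|\,\mathcal{A}_m\big)=\mathbb{E}(1_A|\mathcal{A}_m)$, and the conditional Jensen inequality applied to the concave $\phi$ yields $\mathbb{E}\big(\phi(\mathbb{E}(1_A|\mathcal{A}_n))\,\big|\,\mathcal{A}_m\big)\le\phi\big(\mathbb{E}(1_A|\mathcal{A}_m)\big)$ $\mu$-a.e.; integrating and summing over $A\in\alpha$ shows that $n\mapsto H_\mu(\alpha|\mathcal{A}_n)$ is monotone, which is precisely the monotonicity of $H_\mu(\alpha|\cdot)$ in the $\sigma$-algebra recorded right after the definition of $H_\mu(\cdot|Y)$.

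For the convergence, fix $A\in\alpha$. By the increasing martingale convergence theorem (Lévy's upward theorem), since $\mathcal{A}_n\nearrow\mathcal{A}$ and $1_A\in L^1(X,\mu)$, we get $\mathbb{E}(1_A|\mathcal{A}_n)\to\mathbb{E}(1_A|\mathcal{A})$ $\mu$-a.e. (and in $L^1$). Because $\phi$ is continuous on $[0,1]$, it follows that $\phi(\mathbb{E}(1_A|\mathcal{A}_n))\to\phi(\mathbb{E}(1_A|\mathcal{A}))$ $\mu$-a.e., and since $0\le\phi(\mathbb{E}(1_A|\mathcal{A}_n))\le e^{-1}$ for every $n$, the bounded convergence theorem gives $\int_X\phi(\mathbb{E}(1_A|\mathcal{A}_n))\,d\mu\to\int_X\phi(\mathbb{E}(1_A|\mathcal{A}))\,d\mu$. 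Summing over the finitely many $A\in\alpha$ yields $H_\mu(\alpha|\mathcal{A}_n)\to H_\mu(\alpha|\mathcal{A})$, and combining this with the monotonicity established above gives the monotone convergence claimed.

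The only step that needs a word of care is the passage to the limit under the integral sign, and it causes no real difficulty here: the function $t\mapsto-t\log t$ extends continuously, indeed uniformly continuously, to $t=0$, so there is no singularity of the logarithm to control, and the integrands are uniformly bounded by $e^{-1}$, so bounded convergence applies verbatim. No property of the $G$-action or of amenability enters; the lemma is a purely measure-theoretic statement about conditional expectations, so I do not anticipate a genuine obstacle — essentially all of the content is the martingale convergence theorem transported through the continuous map $\phi$.
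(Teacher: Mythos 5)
Your argument is correct, and it is essentially the only argument available: the paper itself gives no proof of this lemma, only a citation to \cite{huang2004entropy}, and the proof there is exactly the combination you use, namely L\'evy's upward martingale convergence theorem applied to $\mathbb{E}(1_A|\mathcal{A}_n)$, continuity and boundedness of $\phi(t)=-t\log t$ so that bounded convergence passes the limit through the integral, and conditional Jensen for the monotonicity. One point you should not gloss over, however: the monotonicity your Jensen step actually produces is the \emph{decreasing} one, $H_\mu(\alpha|\mathcal{A}_n)\le H_\mu(\alpha|\mathcal{A}_m)$ for $m\le n$, so your proof establishes $H_\mu(\alpha|\mathcal{A}_n)\searrow H_\mu(\alpha|\mathcal{A})$. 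This is the correct statement, and it agrees with the paper's own remark after the definition of $H_\mu(\cdot|Y)$ that conditional entropy decreases as the conditioning $\sigma$-algebra increases; the arrow $\nearrow$ in the printed lemma is a misprint, and the increasing version is false in general. Your closing sentence, which combines your (decreasing) monotonicity with the convergence to obtain ``the monotone convergence claimed,'' silently identifies the two directions; you should instead state explicitly that you have proved the corrected, decreasing form of the lemma.
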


\begin{lemma}\label{lem3.3}
	Let $\varphi :(X,G)\to (Y,G)$ be a factor map between two $G$-systems and $\alpha\in\mathcal{P}(X)$. Then the following holds:
	\begin{itemize}
		\item[$\mathrm{(1)}$] The function $H_* (\alpha|Y)$ is concave on $\mathcal{M}(X)$;
		
		\item[$\mathrm{(2)}$]  The function $\mathrm{Asc} _* (G,\alpha|Y) $ and $\mathrm{Asc} _* (G,X|Y) $ are affine on $\mathcal{M}(X,G)$.
		
	\end{itemize}
\end{lemma}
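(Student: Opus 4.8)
The plan is to prove (1) by a direct computation with conditional expectations, and then to deduce (2) from (1) together with the classical convexity estimates for entropy, Lemma \ref{lem3.2}, and a refining-partition argument. For (1), write $\mathcal C:=\pi^{-1}\mathcal B(Y)$ and $\phi(t):=-t\log t$, so that $H_\mu(\alpha|Y)=\sum_{A\in\alpha}\int_X\phi(\mathbb E_\mu(1_A|\mathcal C))\,d\mu$ for every $\mu\in\mathcal M(X)$. Fix $\mu_1,\mu_2\in\mathcal M(X)$ and $p\in(0,1)$, and set $\mu=p\mu_1+(1-p)\mu_2$. Since $\mu_i\ll\mu$, the densities $f_i=d\mu_i/d\mu$ exist and $pf_1+(1-p)f_2=1$ $\mu$-a.e. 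Testing against bounded $\mathcal C$-measurable functions gives the $\mu$-a.e.\ identity $\mathbb E_\mu(1_Af_i|\mathcal C)=\mathbb E_{\mu_i}(1_A|\mathcal C)\,\mathbb E_\mu(f_i|\mathcal C)$, so, expanding $\mathbb E_\mu(1_A|\mathcal C)=\mathbb E_\mu(1_A(pf_1+(1-p)f_2)|\mathcal C)$ and putting $\lambda_1:=p\,\mathbb E_\mu(f_1|\mathcal C)$, $\lambda_2:=(1-p)\,\mathbb E_\mu(f_2|\mathcal C)$ (these are $\mathcal C$-measurable, nonnegative, and $\lambda_1+\lambda_2=1$ $\mu$-a.e.),
$$\mathbb E_\mu(1_A|\mathcal C)=\lambda_1\,\mathbb E_{\mu_1}(1_A|\mathcal C)+\lambda_2\,\mathbb E_{\mu_2}(1_A|\mathcal C)\qquad\mu\text{-a.e.}$$
Applying the concavity of $\phi$ pointwise, integrating against $\mu$, and using $\int_X g\,\mathbb E_\mu(f_i|\mathcal C)\,d\mu=\int_X g\,d\mu_i$ for bounded $\mathcal C$-measurable $g$ (with $g=\phi(\mathbb E_{\mu_i}(1_A|\mathcal C))$) yields $\int_X\phi(\mathbb E_\mu(1_A|\mathcal C))\,d\mu\ge p\int_X\phi(\mathbb E_{\mu_1}(1_A|\mathcal C))\,d\mu_1+(1-p)\int_X\phi(\mathbb E_{\mu_2}(1_A|\mathcal C))\,d\mu_2$; summing over $A\in\alpha$ gives $H_\mu(\alpha|Y)\ge pH_{\mu_1}(\alpha|Y)+(1-p)H_{\mu_2}(\alpha|Y)$, which is (1).

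For (2), I first treat $\mu\mapsto\mathrm{Asc}_\mu(G,\alpha|Y)$ on $\mathcal M(X,G)$. By (1) each $\mu\mapsto H_\mu(\alpha_S|Y)$ ($S\subseteq F_n$) is concave, hence so is each average $\tfrac1{|F_n|}\sum_{S\subseteq F_n}c^{F_n}_SH_\mu(\alpha_S|Y)$, and therefore so is their infimum over $n$, namely $\mathrm{Asc}_\mu(G,\alpha|Y)$; this gives the ``$\ge$'' half of affineness. For the ``$\le$'' half, choose finite partitions $\beta_m$ of $Y$ with $\beta_m\nearrow\mathcal B(Y)$, so $\pi^{-1}\beta_m\nearrow\mathcal C$. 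For each $S$ and $m$ one has $H_\nu(\alpha_S|\pi^{-1}\beta_m)=H_\nu(\alpha_S\vee\pi^{-1}\beta_m)-H_\nu(\pi^{-1}\beta_m)$ for $\nu\in\{\mu,\mu_1,\mu_2\}$; combining the classical bound $H_\mu(\xi)\le pH_{\mu_1}(\xi)+(1-p)H_{\mu_2}(\xi)+H(p,1-p)$ (with $H(p,1-p)=-p\log p-(1-p)\log(1-p)$) for $\xi=\alpha_S\vee\pi^{-1}\beta_m$ with the concavity of $\nu\mapsto H_\nu(\pi^{-1}\beta_m)$ gives $H_\mu(\alpha_S|\pi^{-1}\beta_m)\le pH_{\mu_1}(\alpha_S|\pi^{-1}\beta_m)+(1-p)H_{\mu_2}(\alpha_S|\pi^{-1}\beta_m)+H(p,1-p)$, and letting $m\to\infty$ with Lemma \ref{lem3.2} yields $H_\mu(\alpha_S|Y)\le pH_{\mu_1}(\alpha_S|Y)+(1-p)H_{\mu_2}(\alpha_S|Y)+H(p,1-p)$. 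Since $H(p,1-p)$ is independent of $S$, after weighting by $c^{F_n}_S$, summing (the weights sum to $1$), and dividing by $|F_n|$ it contributes only $H(p,1-p)/|F_n|\to0$; passing to the limit gives $\mathrm{Asc}_\mu(G,\alpha|Y)\le p\,\mathrm{Asc}_{\mu_1}(G,\alpha|Y)+(1-p)\,\mathrm{Asc}_{\mu_2}(G,\alpha|Y)$. Hence $\mu\mapsto\mathrm{Asc}_\mu(G,\alpha|Y)$ is affine on $\mathcal M(X,G)$.

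For $\mathrm{Asc}_\mu(G,X|Y)=\sup_{\alpha\in\mathcal P(X)}\mathrm{Asc}_\mu(G,\alpha|Y)$, the inequality $\mathrm{Asc}_\mu(G,\alpha|Y)\le\mathrm{Asc}_\mu(G,X|Y)$ together with the affineness just proved gives, after taking the supremum over $\alpha$, the ``$\le$'' half. For the reverse, given $\alpha_1,\alpha_2\in\mathcal P(X)$ put $\alpha=\alpha_1\vee\alpha_2\succeq\alpha_i$; since $H_\nu(\cdot|Y)$ is non-decreasing in the partition one has $\mathrm{Asc}_{\mu_i}(G,\alpha|Y)\ge\mathrm{Asc}_{\mu_i}(G,\alpha_i|Y)$, so $\mathrm{Asc}_\mu(G,X|Y)\ge\mathrm{Asc}_\mu(G,\alpha|Y)$, which by affineness equals $p\,\mathrm{Asc}_{\mu_1}(G,\alpha|Y)+(1-p)\,\mathrm{Asc}_{\mu_2}(G,\alpha|Y)$, and this is $\ge p\,\mathrm{Asc}_{\mu_1}(G,\alpha_1|Y)+(1-p)\,\mathrm{Asc}_{\mu_2}(G,\alpha_2|Y)$; taking the supremum first over $\alpha_1$ and then over $\alpha_2$ completes the proof of affineness.

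I expect the main obstacle to be the ``$\le$'' half of the affineness in (2): unlike ordinary (conditional) entropy, which is only concave, $\mathrm{Asc}$ is affine precisely because the convexity defect $H(p,1-p)$ — which enters once the non-atomic $\sigma$-algebra $\pi^{-1}\mathcal B(Y)$ is replaced by a finite one and Lemma \ref{lem3.2} is invoked — does not depend on $S\subseteq F_n$ and is therefore annihilated by the normalization $1/|F_n|$ (using $|F_n|\to\infty$ for a F\o lner sequence of an infinite group). Once this observation and the concavity from (1) are in hand, the remaining steps (part (1) itself and the supremum manipulations for $\mathrm{Asc}_\mu(G,X|Y)$) are routine.
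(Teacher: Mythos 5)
Your proof is correct, and it follows essentially the same route the paper intends: the paper simply defers to \cite{HUANG_YE_ZHANG_2006}, and your argument (the density/conditional-expectation decomposition for concavity of $H_\ast(\alpha|Y)$, then the finite-partition approximation via Lemma \ref{lem3.2} with the convexity defect $-p\log p-(1-p)\log(1-p)$ killed by the normalization $1/|F_n|$, plus the join trick $\alpha_1\vee\alpha_2$ for the supremum) is exactly the standard argument from that reference adapted to $\mathrm{Asc}$. No gaps worth noting beyond the routine choice of refining partitions $\beta_m$ of $Y$ with diameters tending to $0$, which you already indicate.
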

\begin{proof}
	The proof of (1) can be refer to \cite{HUANG_YE_ZHANG_2006} and the proof of (2) is similar to \cite{HUANG_YE_ZHANG_2006}.
\end{proof}
Let $X$ be a compact metric space. A real-valued function $f:X\to\mathbb{R}$ is called \textit{upper semi-continuous} (for short u.s.c.), if one of the following conditions holds:

\begin{itemize}
	\item[(i)] $\limsup\limits_{x'\to x}f(x')\le f(x)$ for each $x\in X$.
	
	\item[(ii)]  The set $\{x\in X:f(x)\ge a\}$ is closed for each $a\in\mathbb{R}$.
	
\end{itemize}

\begin{proposition}\label{thm3.1}
	Let $\pi:(X,G)\to(Y,G)$ be a factor map between two $G$-systems and $\mathcal{U}\in \mathcal{C}^o_X $. Then $\mathrm{Asc} _* ^+(G,\mathcal{U}|Y):\mathcal{M} (X,G)\to \mathbb{R} _+$ and $\mathrm{Asc} _* ^-(G,\mathcal{U}|Y):\mathcal{M} (X,G)\to \mathbb{R} _+$ are  u.s.c. on $\mathcal{M} (X,G)$.
\end{proposition}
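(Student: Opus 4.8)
The plan is to establish upper semi‑continuity pointwise. Fix $\mu_0\in\mathcal M(X,G)$ and $\varepsilon>0$; the goal is a weak$^*$-neighbourhood $V$ of $\mu_0$ in $\mathcal M(X,G)$ on which $\mathrm{Asc}^+_\nu(G,\mathcal U|Y)<\mathrm{Asc}^+_{\mu_0}(G,\mathcal U|Y)+4\varepsilon$, and likewise for $\mathrm{Asc}^-$. In both cases the quantity is an infimum over $n$ of a finite average $\tfrac1{|F_n|}\sum_{S\subseteq F_n}c_S^{F_n}H_\nu(\beta_S|\pi^{-1}\mathcal B(Y))$ of conditional entropies, and by the martingale‑type Lemma \ref{lem3.2} each $H_\nu(\beta_S|\pi^{-1}\mathcal B(Y))$ is itself an infimum over finite partitions $\eta$ of $Y$ of $H_\nu(\beta_S|\pi^{-1}\eta)$. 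Hence, after fixing a single Følner set $F_{n_0}$ and finitely many partitions realising the relevant inner infima up to $\varepsilon$ at $\mu_0$, it remains to see that $\nu\mapsto H_\nu(\beta_S|\pi^{-1}\eta)$ is continuous \emph{at the point} $\mu_0$ once $\beta_S$ and $\eta$ consist of $\mu_0$- (resp. $\pi\mu_0$-) continuity sets: indeed $H_\nu(\beta_S|\pi^{-1}\eta)=\sum_{E\in\eta}\psi\big((\nu(B\cap\pi^{-1}E))_{B\in\beta_S}\big)$, where $\psi(s_1,\dots,s_r)=-\sum_i s_i\log(s_i/\sum_k s_k)$ is continuous on $[0,1]^r$, and $\nu\mapsto\nu(B\cap\pi^{-1}E)$ is continuous at $\mu_0$ whenever $\mu_0(\partial B)=0$ and $(\pi\mu_0)(\partial E)=0$ (portmanteau, using $\partial(\pi^{-1}E)\subseteq\pi^{-1}(\partial E)$). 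Crucially, since $\mu_0$ is $G$-invariant, $\mu_0(\partial(g^{-1}B))=\mu_0(\partial B)=0$ for every $g$, so a $\mu_0$-null-boundary partition of $X$ remains one under all the translates appearing in $\beta_S=\bigvee_{g\in S}g^{-1}\beta$.

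For $\mathrm{Asc}^+$: since coarsening a partition does not increase $\mathrm{Asc}_{\mu_0}(G,\cdot|Y)$, there is $\alpha=\{A_1,\dots,A_d\}$ with $A_i\subseteq U_i$ and $\mathrm{Asc}_{\mu_0}(G,\alpha|Y)<\mathrm{Asc}^+_{\mu_0}(G,\mathcal U|Y)+\varepsilon$. Using inner regularity, the fact that metric balls with $\mu_0$-null boundary form a basis, and that the (small) leftover set can be re‑partitioned subordinately to $\mathcal U$ by such balls, I would replace $\alpha$ by $\alpha'=\{A_1',\dots,A_d'\}$ with $A_i'\subseteq U_i$, $\mu_0(\partial A_i')=0$ and Rokhlin distance $\rho_{\mu_0}(\alpha,\alpha')=H_{\mu_0}(\alpha|\alpha')+H_{\mu_0}(\alpha'|\alpha)$ as small as desired; then $\alpha'$ is still $\varepsilon$-optimal, because $|\mathrm{Asc}_{\mu_0}(G,\alpha|Y)-\mathrm{Asc}_{\mu_0}(G,\alpha'|Y)|\le\tfrac12\rho_{\mu_0}(\alpha,\alpha')$, which follows from $H_{\mu_0}(\alpha_S|Y)\le H_{\mu_0}(\alpha'_S|Y)+|S|H_{\mu_0}(\alpha|\alpha')$ together with $\sum_{S\subseteq F_n}c_S^{F_n}|S|=|F_n|/2$ (symmetry of the coefficients). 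Choose $n_0$ with $\tfrac1{|F_{n_0}|}\sum_S c_S^{F_{n_0}}H_{\mu_0}(\alpha'_S|Y)<\mathrm{Asc}_{\mu_0}(G,\alpha'|Y)+\varepsilon$ and, by Lemma \ref{lem3.2}, a single finite partition $\eta$ of $Y$ with $(\pi\mu_0)(\partial\eta)=0$ and $H_{\mu_0}(\alpha'_S|\pi^{-1}\eta)<H_{\mu_0}(\alpha'_S|Y)+\varepsilon$ for all $S\subseteq F_{n_0}$ (finitely many conditions; a $\pi\mu_0$-null-boundary $\eta$ close to a nearly optimal one exists because conditional entropy is continuous in the conditioning partition for the Rokhlin metric of $\pi\mu_0$). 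By the first paragraph, $\nu\mapsto H_\nu(\alpha'_S|\pi^{-1}\eta)$ is continuous at $\mu_0$ for each of the finitely many $S$, so on a suitable $V$ one has $H_\nu(\alpha'_S|\pi^{-1}\eta)<H_{\mu_0}(\alpha'_S|\pi^{-1}\eta)+\varepsilon$ for all $S\subseteq F_{n_0}$; chaining $\mathrm{Asc}^+_\nu(G,\mathcal U|Y)\le\mathrm{Asc}_\nu(G,\alpha'|Y)\le\tfrac1{|F_{n_0}|}\sum_S c_S^{F_{n_0}}H_\nu(\alpha'_S|Y)\le\tfrac1{|F_{n_0}|}\sum_S c_S^{F_{n_0}}H_\nu(\alpha'_S|\pi^{-1}\eta)$ with the previous estimates (and $\sum_S c_S^{F_{n_0}}=1$) yields $\mathrm{Asc}^+_\nu(G,\mathcal U|Y)<\mathrm{Asc}^+_{\mu_0}(G,\mathcal U|Y)+4\varepsilon$, as wanted.

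The case of $\mathrm{Asc}^-$ runs \emph{mutatis mutandis}, except that the auxiliary partitions are chosen per $S$: write $\mathrm{Asc}^-_{\mu_0}(G,\mathcal U|Y)=\inf_n\tfrac1{|F_n|}\sum_{S\subseteq F_n}c_S^{F_n}H_{\mu_0}(\mathcal U_S|Y)$ and $H_{\mu_0}(\mathcal U_S|Y)=\inf\{H_{\mu_0}(\gamma|\pi^{-1}\eta):\gamma\succeq\mathcal U_S,\ \eta\in\mathcal P_Y\}$; fix $n_0$ nearly realising the outer infimum, and for each of the finitely many $S\subseteq F_{n_0}$ fix $\gamma_S\succeq\mathcal U_S$ with $\mu_0(\partial\gamma_S)=0$ and $\eta_S\in\mathcal P_Y$ with $(\pi\mu_0)(\partial\eta_S)=0$ nearly realising $H_{\mu_0}(\mathcal U_S|Y)$ — here one uses that $\mathcal U_S=\bigvee_{g\in S}g^{-1}\mathcal U$ is again a finite \emph{open} cover, so the same cleaning‑up construction produces such a $\gamma_S$ refining it. Then $\nu\mapsto H_\nu(\gamma_S|\pi^{-1}\eta_S)$ is continuous at $\mu_0$, so on a suitable $V$ and for all $S\subseteq F_{n_0}$ one gets $H_\nu(\mathcal U_S|Y)\le H_\nu(\gamma_S|\pi^{-1}\eta_S)<H_{\mu_0}(\mathcal U_S|Y)+3\varepsilon$, hence $\mathrm{Asc}^-_\nu(G,\mathcal U|Y)\le\tfrac1{|F_{n_0}|}\sum_S c_S^{F_{n_0}}H_\nu(\mathcal U_S|Y)<\mathrm{Asc}^-_{\mu_0}(G,\mathcal U|Y)+4\varepsilon$.

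The main obstacle, and the step that must be carried out with care, is the approximation underlying both cases: that the infimum defining $\mathrm{Asc}^+_{\mu_0}(G,\mathcal U|Y)$ (resp. $H_{\mu_0}(\mathcal U_S|Y)$) is still nearly attained after restricting to partitions that simultaneously refine the given \emph{open} cover and consist of $\mu_0$-continuity sets, together with the corresponding statement for partitions of $Y$ via Lemma \ref{lem3.2}. This is precisely where openness of $\mathcal U$ (and of each $\mathcal U_S$) enters — it lets a partition subordinate to the cover be ``cleaned up'' into one with $\mu_0$-null boundaries by shrinking its atoms inside the open members and re‑partitioning the small leftover by null‑boundary balls — and it is the analogue, in the present conditional amenable‑group setting, of the approximation lemmas of \cite{huang2004entropy,HUANG_YE_ZHANG_2006}. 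Once it is in place, everything else reduces, via $G$-invariance of $\mu_0$ and the portmanteau theorem, to the elementary continuity of the finite‑dimensional entropy expression recorded in the first paragraph.
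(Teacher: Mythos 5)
Your proof is correct and is essentially the argument the paper itself invokes only by citation (``similar to the case of local entropy in Huang--Ye--Zhang''): fix a near-optimal Følner index and near-optimal partitions at $\mu_0$, replace them by partitions with $\mu_0$-null-boundary atoms subordinate to the open cover (using $G$-invariance so all translates keep null boundaries), approximate $\pi^{-1}\mathcal{B}(Y)$ by a finite null-boundary partition of $Y$ via the martingale lemma, and conclude by weak$^*$ continuity of $\nu\mapsto H_\nu(\cdot|\pi^{-1}\eta)$ at $\mu_0$, with the Rokhlin-distance estimate $\bigl|\mathrm{Asc}_{\mu_0}(G,\alpha|Y)-\mathrm{Asc}_{\mu_0}(G,\alpha'|Y)\bigr|\le\tfrac12\rho_{\mu_0}(\alpha,\alpha')$ (valid since $\sum_{S\subseteq F_n}c^{F_n}_S|S|=|F_n|/2$ by symmetry of the coefficients). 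The one step you leave schematic, cleaning a partition finer than the open cover into a null-boundary one at small Rokhlin distance, is the standard approximation from that local entropy theory and poses no gap.
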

\begin{proof}
	The  proof of this proposition is similar to the case of local entropy in \cite{huang2011local}.
\end{proof}

\subsection{The relationship between topological and measure-theoretical average sample complexity }
In this subsection, we shall present the proofs of the variational principle for two kinds of conditional average sample complexity of covers.

The following lemma plays an important role in the proof of conditional average sample complexity, which can be referred to \cite{HUANG_YE_ZHANG_2006}.
\begin{lemma}\label{lem3.4}
	Let $\mathcal{U}\in\mathcal{C}^o_X$ and $\alpha_l\in\mathcal{P}_X$ with $\alpha_l\succeq \mathcal{U}$, $1\le l\le K$. Then for each $S\in\mathcal{F}(G)$ there exists a finite subset $B_S\subseteq X$ such that each element of $(\alpha_l)_S$ contains at most one point of $B_S$, $l=1,\dots,K$ and $|B_S|\ge \frac{N(\mathcal{U}_S)}{|K|}$.
\end{lemma}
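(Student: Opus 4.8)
The plan is to reduce the statement to a counting argument about subcovers. First I would observe that for each $l$ we may pick a minimal subfamily $\mathcal{W}_l\subseteq (\alpha_l)_S$ covering $X$; since $\alpha_l\succeq\mathcal{U}$, we have $(\alpha_l)_S\succeq\mathcal{U}_S$, so each member of $\mathcal{W}_l$ sits inside some member of $\mathcal{U}_S$, and replacing each such member by the $\mathcal{U}_S$-element containing it produces a subcover of $\mathcal{U}_S$; hence $N(\mathcal{U}_S)\le |\mathcal{W}_l|=N((\alpha_l)_S)$ (strictly, $N(\mathcal{U}_S)\le N((\alpha_l)_S)$, but the point is that the partitions $(\alpha_l)_S$ are fine enough to realize $N(\mathcal{U}_S)$). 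The target is a set $B_S$ meeting each atom of each $(\alpha_l)_S$ in at most one point, of size at least $N(\mathcal{U}_S)/K$.

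Next I would set up the selection of $B_S$ greedily. Fix a minimal subcover $\mathcal{W}\subseteq\mathcal{U}_S$ of $X$ with $|\mathcal{W}|=N(\mathcal{U}_S)$; minimality means every $W\in\mathcal{W}$ contains a point $x_W$ lying in no other member of $\mathcal{W}$. Consider the common refinement $\beta:=(\alpha_1)_S\vee\cdots\vee(\alpha_K)_S$. Each atom of $\beta$ is contained in a unique atom of each $(\alpha_l)_S$, so it suffices to choose $B_S$ meeting each atom of $\beta$ at most once — and in fact I would choose one representative point from each atom of $\beta$ that we actually use. The key combinatorial claim is: among the points $\{x_W:W\in\mathcal{W}\}$, the number of distinct $\beta$-atoms they occupy is at least $N(\mathcal{U}_S)/K$. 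To see this, note that if an atom $A$ of $\beta$ contains $x_W$ and $x_{W'}$ for $W\neq W'$, then $A$ lies in a single atom of each $(\alpha_l)_S$; since $(\alpha_l)_S\succeq\mathcal{U}_S$, that atom of $(\alpha_l)_S$ lies in some member of $\mathcal{U}_S$, and by the distinguishing property of $x_W$ this forces that member to be $W$, and likewise $W'$ — contradiction unless the atom of $(\alpha_l)_S$ is not uniquely determined. The correct bound comes from a pigeonhole over the $K$ partitions: each $\beta$-atom can "absorb" at most $K$ of the distinguished points $x_W$ (one coming from the ambiguity in each of the $K$ coordinates), whence the number of occupied $\beta$-atoms is $\ge |\mathcal{W}|/K=N(\mathcal{U}_S)/K$. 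Then $B_S:=\{$one point per occupied $\beta$-atom$\}$ works: it meets each atom of $\beta$, hence of each $(\alpha_l)_S$, at most once, and $|B_S|\ge N(\mathcal{U}_S)/K$.

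The main obstacle will be making the pigeonhole in the previous step precise: one has to argue carefully why a single atom of $\beta$ cannot contain more than $K$ of the distinguished points $x_W$, i.e. why the "forbidden" coincidences are controlled by the number of partitions rather than being unbounded. The clean way is to fix, for each $\beta$-atom $A$ that is used, an injection from the set $\{W\in\mathcal{W}:x_W\in A\}$ into $\{1,\dots,K\}$ by assigning to $W$ an index $l$ for which the $(\alpha_l)_S$-atom containing $A$ fails to pin down $W$ uniquely among $\mathcal{W}$; showing such an assignment can be made injective on each $A$ is exactly the content of the bound and uses minimality of $\mathcal{W}$ essentially. Once that injectivity is established the counting is immediate. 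I would also remark that taking $B_S$ finite is automatic since $\mathcal{W}$ and $\beta$ are finite, and that the construction is uniform in $S\in\mathcal{F}(G)$, which is all that is needed for the applications in Subsection 3.4.
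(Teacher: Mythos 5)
There is a genuine gap, and it sits exactly where you yourself flagged it: the pigeonhole claim that an atom of $\beta=(\alpha_1)_S\vee\cdots\vee(\alpha_K)_S$ can contain at most $K$ of the distinguished points $x_W$ is false, and the heuristic you give for it is circular. Your contradiction argument assumes that the member of $\mathcal{U}_S$ containing the relevant atom of $(\alpha_l)_S$ belongs to the chosen minimal subcover $\mathcal{W}$; in general it does not, and then the ``distinguishing property'' of the $x_W$'s (which is only relative to $\mathcal{W}$) gives nothing. Concretely, take $X=\{1,2,3\}$ discrete, $S=\{e_G\}$, $K=1$, $\mathcal{U}=\{\{1,2\},\{2,3\},\{1,3\}\}$, so $N(\mathcal{U}_S)=2$, and $\alpha_1=\{\{1,3\},\{2\}\}\succeq\mathcal{U}$. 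The minimal subcover $\mathcal{W}=\{\{1,2\},\{2,3\}\}$ forces $x_{\{1,2\}}=1$ and $x_{\{2,3\}}=3$, and both lie in the single $\beta$-atom $\{1,3\}$: one atom absorbs $2>K$ distinguished points, the number of occupied atoms is $1<N(\mathcal{U}_S)/K$, and the proposed injection into $\{1,\dots,K\}$ cannot exist. Rescuing your top-down construction would require a simultaneous clever choice of $\mathcal{W}$ and of the representatives $x_W$, which you neither specify nor prove possible, so as written the proof does not go through.

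The lemma is proved (this is the argument in the cited reference \cite{HUANG_YE_ZHANG_2006}) by going in the opposite direction, bottom-up rather than top-down: choose $B_S\subseteq X$ of maximal cardinality subject to the separation property that each atom of each $(\alpha_l)_S$, $1\le l\le K$, contains at most one point of $B_S$; such a maximal set exists and is finite since its cardinality is bounded by $\min_l|(\alpha_l)_S|$. Maximality means that every $x\in X$ lies in the $(\alpha_l)_S$-atom of some $y\in B_S$ for some $l$ (otherwise $B_S\cup\{x\}$ would still have the separation property), so the atoms $(\alpha_l)_S(y)$, $y\in B_S$, $1\le l\le K$, cover $X$. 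Since $\alpha_l\succeq\mathcal{U}$ implies $(\alpha_l)_S\succeq\mathcal{U}_S$, replacing each such atom by a member of $\mathcal{U}_S$ containing it produces a subcover of $\mathcal{U}_S$ of cardinality at most $K\,|B_S|$, whence $N(\mathcal{U}_S)\le K\,|B_S|$, i.e. $|B_S|\ge N(\mathcal{U}_S)/K$. This avoids any choice of minimal subcover, and hence the problematic counting, entirely.
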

\begin{theorem}\label{thm3.2}
	Let $\pi:(X,G)\to(Y,G)$ be a factor map between two $G$-systems, $A\subseteq G$ , $\{c_S^A=\frac{1}{2^{|A|}}:S\subseteq A\}$ and $\mathcal{U}\in \mathcal{C}^o_X $. Then there exists $\mu\in\mathcal{M}(X,G)$ such that $$\mathrm{Asc}_\mathrm{top} (G,\mathcal{U}|Y)\le \mathrm{Asc}_\mu^+ (G,\mathcal{U}|Y).$$
\end{theorem}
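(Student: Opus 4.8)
The plan is to prove the nontrivial half of a local variational principle, adapting the construction of Romagnoli \cite{romagnoli2018local} and Huang--Ye--Zhang \cite{HUANG_YE_ZHANG_2006,huang2011local} to the amenable, conditional, uniform-coefficient setting, using Lemma \ref{lem3.4} as the combinatorial engine and the Ornstein--Weiss quasitiling (Definition \ref{def2.6}) in place of interval tiling. Write $a=\mathrm{Asc}_{\mathrm{top}}(G,\mathcal{U}|Y)$ and fix a F{\o}lner sequence $\{F_n\}_{n\in\mathbb{N}}$ with $e_G\in F_n$. First I would fix a countable family $\{\alpha^{(l)}\}_{l\ge 1}\subseteq\mathcal{P}_X$ of partitions refining $\mathcal{U}$, each of cardinality $\le|\mathcal{U}|$, dense among all such partitions uniformly in $\mu$ (such a family exists because $X$ is second countable). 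By a continuity argument as in Proposition \ref{equation}, applied to $\mathrm{Asc}_\mu(G,\cdot|Y)$ on partitions of bounded cardinality, it then suffices to exhibit one $\mu\in\mathcal{M}(X,G)$ with $\mathrm{Asc}_\mu(G,\alpha^{(l)}|Y)\ge a$ for every $l$; indeed then $\mathrm{Asc}_\mu^+(G,\mathcal{U}|Y)=\inf_{\alpha\succeq\mathcal{U}}\mathrm{Asc}_\mu(G,\alpha|Y)=\inf_{l\ge 1}\mathrm{Asc}_\mu(G,\alpha^{(l)}|Y)\ge a$.

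To build $\mu$, pick $K_n\nearrow\infty$ with $\log K_n/|F_n|\to0$. For each $n$, choose $y_n\in Y$ realizing $N(\mathcal{U}_{F_n}|Y)=N(\mathcal{U}_{F_n}|\pi^{-1}(y_n))$ (the supremum is attained, being a positive integer $\le|\mathcal{U}|^{|F_n|}$), and apply Lemma \ref{lem3.4} to $\alpha^{(1)},\dots,\alpha^{(K_n)}$ inside the fibre $\pi^{-1}(y_n)$ to obtain $B_n\subseteq\pi^{-1}(y_n)$ with $|B_n|\ge N(\mathcal{U}_{F_n}|Y)/K_n$ such that each atom of $(\alpha^{(l)})_{F_n}$, $1\le l\le K_n$, contains at most one point of $B_n$. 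Let $\nu_n$ be the uniform measure on $B_n$ and $\mu_n=|F_n|^{-1}\sum_{g\in F_n}g\nu_n$; since $\pi$ is equivariant and $\{F_n\}$ is F{\o}lner, every weak $^*$ limit point $\mu$ of $(\mu_n)_n$ lies in $\mathcal{M}(X,G)$. As $\nu_n$ is carried by a single fibre, $H_{\nu_n}\bigl((\alpha^{(l)})_{F_n}|Y\bigr)=\log|B_n|\ge\log N(\mathcal{U}_{F_n}|Y)-\log K_n$ whenever $l\le K_n$.

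It remains to transfer these estimates to $\mu$. Fixing $l$ and $m$, a quasitiling of a large $F_n$ by translates of $F_m$, combined with the concavity of $H_\bullet(\cdot|Y)$ in the measure (Lemma \ref{lem3.3}(1)), the monotone convergence of conditional entropy along increasing $\sigma$-algebras (Lemma \ref{lem3.2}), and a conditional upper-semicontinuity argument as in \cite{huang2011local}, gives a lower bound on $|F_m|^{-1}H_\mu\bigl((\alpha^{(l)})_{F_m}|Y\bigr)$. Since the quantity I actually need, $\mathrm{Asc}_\mu(G,\alpha^{(l)}|Y)=\lim_m|F_m|^{-1}\sum_{S\subseteq F_m}c^{F_m}_SH_\mu\bigl((\alpha^{(l)})_S|Y\bigr)$, averages conditional entropies over all subsets $S\subseteq F_m$ and not merely the term $S=F_m$, I would feed the single-scale estimate into the self-similar structure of the uniform-coefficient average sample complexity (reflected in Proposition \ref{prop3.2} and in the Ornstein--Weiss subadditivity underlying Definition \ref{def3.2}), applying Lemma \ref{lem3.4} also at the subscales $\mathcal{U}_S$; the slow growth of $K_n$ makes the losses $\log K_n/|F_n|$ and the quasitiling boundary errors vanish in the limit. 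Doing this for every $l$ yields $\mathrm{Asc}_\mu(G,\alpha^{(l)}|Y)\ge a$, and the first paragraph completes the proof. (Alternatively, one could aim for the stronger $\mathrm{Asc}_{\mathrm{top}}(G,\mathcal{U}|Y)\le\mathrm{Asc}_\mu^-(G,\mathcal{U}|Y)$, in the spirit of \cite{glasner2003ergodic}, and then conclude via Proposition \ref{prop3.2}(1).)

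The decisive obstacle, I expect, is precisely this reconciliation of scales. In the local entropy variational principle one only needs to bound $|F_m|^{-1}H_\mu(\alpha_{F_m}|Y)$ from below, whereas here the relevant quantity averages $H_\mu(\alpha_S|Y)$ over \emph{all} $S\subseteq F_m$, and no finite set can be separated with respect to $\alpha_S$ simultaneously for every $S$ (for singleton $S$ this would force it to have at most $|\mathcal{U}|$ points). Matching the single-scale combinatorial lemma to the sum over subsets, while keeping a single measure $\mu$ good for all partitions refining $\mathcal{U}$ -- balancing the factor $1/K_n$ in Lemma \ref{lem3.4} against the rate $\log K_n/|F_n|\to0$ -- and handling the conditional upper semicontinuity over $Y$, is where the substantive work lies; the remaining ingredients are routine manipulations of conditional entropy and F{\o}lner averages.
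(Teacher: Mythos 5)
There is a genuine gap, and it sits exactly where you yourself flag it: the passage from a single-scale separated set to the subset-averaged quantity $\mathrm{Asc}$. Your construction builds, for each $n$, \emph{one} set $B_n$ separated only with respect to $(\alpha^{(l)})_{F_n}$, which is the classical entropy scheme; as you correctly observe, such a set gives no lower bound on $H_\mu(\alpha_S|Y)$ for proper subsets $S\subseteq F_m$, and no single finite set can be $(\alpha_S)$-separated for all $S$ simultaneously. You then defer the resolution to ``the self-similar structure of the uniform-coefficient average sample complexity (Proposition \ref{prop3.2})'', but this is not an argument: items (4)--(7) of Proposition \ref{prop3.2} are stated and proved only for $\mathbb{Z}$-actions (they are what the paper uses later, in Theorem \ref{thm3.3}, for the $\mathbb{Z}$-case), so they are unavailable for general amenable $G$, and nothing in your sketch produces the needed bound on $\sum_{S\subseteq F_m}c^{F_m}_S H_\mu(\alpha_S|Y)$ from the single estimate $H_{\nu_n}((\alpha^{(l)})_{F_n}|Y)\ge \log N(\mathcal{U}_{F_n}|Y)-\log K_n$; at best that route yields an entropy-type inequality $h_\mu\ge h_{\mathrm{top}}(\mathcal{U})$, which does not imply anything about $\mathrm{Asc}_\mu$ since $\mathrm{Asc}_\mu(G,\alpha|Y)$ can be strictly smaller than $h_\mu(G,\alpha|Y)$.

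The paper's proof overcomes precisely this obstacle by a different construction: for \emph{every} subset $S\subseteq F_n$ it applies Lemma \ref{lem3.4} to get a separated set $B_{F_n,S}$ adapted to $\mathcal{U}_S$ (with loss $\log n$, playing the role of your $\log K_n$), forms the empirical measures $\nu_{S,n}$ on each $B_{F_n,S}$, and defines $\mu_n=\frac{1}{|F_n|}\sum_{S\subseteq F_n}2^{-(|F_n|-1)}\sum_{g\in S}g\,\nu_{S,n}$, i.e.\ the coefficient-weighted average over all $S$ of the $S$-adapted empirical measures and their translates. A double-averaging/subadditivity inequality (the display \eqref{eq5.2}) then bounds $\frac{1}{|F_n|}\sum_S 2^{-|F_n|}H_{\nu_{S,n}}((\alpha_l)_S)$ by $\frac{1}{|B|}\sum_{A\subseteq B}c^B_A H_{\mu_n}((\alpha_l)_A)$ up to a $\frac{\log d}{|B|}$ error, and weak$^*$ convergence along clopen partitions (after a Step 2 reduction of general $X$ to a zero-dimensional extension) finishes the proof; no self-similarity or $\mathbb{Z}$-specific identity is needed. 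Two secondary remarks: your appeal to a countable family of partitions refining $\mathcal{U}$ that is $L^1(\mu)$-dense \emph{for every} $\mu$ simultaneously is not justified for general compact $X$ (boundaries need not be null for all measures); the paper avoids this by the zero-dimensional reduction, where clopen partitions in $\mathcal{U}^*$ do the job. Also note that the paper's own argument actually establishes the unconditional inequality (it works with $N(\mathcal{U}_S)$ and unconditional entropies), so your fibrewise choice of $y_n$ is an attempt at something the paper does not carry out, and transferring conditional entropy bounds through the weak$^*$ limit would require additional semicontinuity work you have not supplied.
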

\begin{proof}
	$\mathbf{Step 1:}$ Assume that $X$ is a zero-dimensional space.
	
	Let $\mathcal{U}=\{U_1,\dots,U_d\}$. We set $\mathcal{U}^*=\{\alpha=\{A_1,\dots,A_d\}\in\mathcal{P}(X): A_i\subseteq U_i, i\in\{1,\dots,d\}\}$ and $\{\alpha_l\}_{l\in\mathbb{N}}\subseteq\mathcal{U}^*$ an enumeration of the family of partitions which consists of clopen subsets. Then it is easy to show that $\mathrm{Asc}_\nu ^+(G,\mathcal{U})=\inf\limits_{l\in\mathbb{N}}\mathrm{Asc}_\nu(G,\alpha_l)$ for each $\nu\in\mathcal{M}(X,G)$.
	
	Since $G$ is infinite, let $\{F_n\}_{n\in\mathbb{N}}$ be a F{\o}ner sequence of $G$ with $|F_n|\ge n$. By Lemma \ref{lem3.4}, for each $n\in\mathbb{N}$ and $S\subseteq F_n$ there exists a finite subset $B_{{F_n},S}$ such that 
	$
	|B_{{F_n},S}|\ge \frac{N(\mathcal{U}_S)}{|K|},
	$
	and each element of $(\alpha_l)_S$ contains most one point of $S\subseteq F_n$, for $l=1,\dots,n$. Let
	$$
	\nu_{S,n}=\frac{1}{|B_{{F_n},S}|}\sum_{x\in B_{{F_n},S}}\delta _x\text{ and }\mu_{n}=\frac{1}{|F_n|}\sum_{S\subseteq F_n}\frac{1}{2^{|F_n|-1}}\sum_{g\in S}g\nu_{S,n}.
	$$
	It is easy to show that $\nu_{S,n}\in \mathcal{M}(X)$ for each $n\in\mathbb{N}$ and $S\subseteq F_n$.  We choose a sub-sequence $\{n_j\}_{j\in\mathbb{N}}$ such that $\mu_{n_j}\to\mu$ in weak$^*$-topology of $\mathcal{M}(X)$ as $j\to\infty$. It is not hard to check that $\mu\in\mathcal{M}(X,G)$. Now we shall prove that $\mu$ satisfies $ \mathrm{Asc}_{top} (G,\mathcal{U})\le\mathrm{Asc}_\mu ^+(G,\mathcal{U})$.
	
	Fix an $l\in\mathbb{N}$ and each $n\ge l$. By the construction of $B_{{F_n},S}$, we know that 
	\begin{equation}\label{eq5.1}
		\begin{aligned}
			\log N(\mathcal{U}_S)-\log n \le \log |B_{{F_n},S}|&=\sum_{x\in B_{{F_n},S}}-\nu_{S,n}(\{x\})\log \nu_{S,n}(\{x\})=H_{\nu_{S,n}}((\alpha_l)_S).
		\end{aligned}
	\end{equation}
	For each $B\in\mathcal{F}(G)$, we have 
	\begin{equation}\label{eq5.2}
		\begin{aligned}
			&\frac{1}{|F_n|}\sum _{S\subseteq F_n} \frac{1}{2^{|F_n|}}H_{\nu_{S,n}}((\alpha_l)_S)\le \frac{1}{|F_n|}\sum _{S\subseteq F_n} \frac{1}{2^{|F_n|}}\sum_{g\in S}\frac{1}{|B|}\sum_{A\subseteq B}c^B_AH_{\nu_{S,n}}((\alpha_l)_{Ag})\\+& \frac{1}{|F_n|}\sum _{S\subseteq F_n} \frac{1}{2^{|F_n|}}\sum_{g\in S}\frac{1}{|B|}\sum_{A\subseteq B} c^B_A\left|S\setminus \{g\in G:A^{-1}g\subseteq S\}\right| \cdot \log |\alpha_l|\\
			\le &\frac{1}{|F_n|\cdot|B|}\sum _{S\subseteq F_n}\frac{1}{2^{|F_n|}}\sum_{g\in S}\sum_{A\subseteq B} c^B_AH_{g\nu_{S,n}}((\alpha_l)_{A})+\frac{1}{|B|}\cdot \log d\le \frac{1}{|B|}\sum_{A\subseteq B}c^B_AH_{\mu_n}((\alpha_l)_{A})+\frac{1}{|B|}\cdot \log d.
		\end{aligned}
	\end{equation}
	By \eqref{eq5.1} and \eqref{eq5.2}, we obtain
	\begin{equation}\label{eq5.3}
		\begin{aligned}
			\frac{1}{|F_n|}\sum _{S\subseteq F_n}c^{F_n}_S\log N(\mathcal{U}_S)&\le\frac{1}{|B|}\sum_{A\subseteq B} c^B_AH_{\mu_n}((\alpha_l)_{A})+\frac{\log n}{|F_n|}+\frac{1}{|B|}\cdot \log d.
		\end{aligned}
	\end{equation}
	Noting that $\lim\limits_{j\to\infty}H_{\mu_{n_j}}((\alpha_l)_A)=H_{\mu}((\alpha_l)_A)$, by substituting $n$ with $n_j$ in \eqref{eq5.3}, we have 
	$$
	\begin{aligned}
		\mathrm{Asc}_{top} (G,\mathcal{U})\le\frac{1}{|B|}\sum_{A\subseteq B} c^B_AH_{\mu}((\alpha_l)_{A})+\frac{1}{|B|}\cdot \log d.
	\end{aligned}
	$$
	Replacing $B$ by $F_m$ where $\{F_m\}_{m\in\mathbb{N}}$ is a F{\o}lner sequence with $|F_m|\ge m$. Taking $m\to\infty$, we obtain $ \mathrm{Asc}_{top} (G,\mathcal{U})\le\mathrm{Asc}_\nu ^+(G,\mathcal{U})$ for some $\nu\in\mathcal{M}(X,G)$. 
	
	$\mathbf{Step 2:}$ The general case.
	
	It is well known that there exists a surjective continuous map $\phi_1:C\to X$, where $C$ is a cantor set. Let $C^G$ be the product space equiped with the $G$-shift $G\times C^G\to C^G$. We define 
	$
	Z=\{(z_g)_{g\in G}\in C^G:\phi_1(z_{g_1g_2})=g_1\phi_1(z_{g_2})\text{ }\mathrm{for}\text{ each } g_1,g_2\in G\}
	$	and $\varphi:Z\to X$ with $(z_g)_{g\in G}\in C^G\mapsto\phi_1(z_{e_G})$. It is not hard to obtain that $Z\subseteq C^G$ is a closed invariant subset under the $G$-shift. Additionally, $\varphi:(Z,G)\to(X,G)$ is a factor map between two $G$-systems. According to Step 1, there exists $\nu\in\mathcal{M}(X,G)$ such that $ \mathrm{Asc}_{top} (G,\mathcal{U})\le\mathrm{Asc}_\nu ^+(G,\mathcal{U})$. Let $\mu=\varphi\nu\in\mathcal{M}(X,G)$. Then 
	$$
	\begin{aligned}
		\mathrm{Asc}_\mu ^+(G,\mathcal{U})&=\inf_{\alpha\in\mathcal{P}(X),\alpha\succeq\mathcal{U}}\mathrm{Asc}_\mu (G,\alpha)=\inf_{\alpha\in\mathcal{P}(X),\alpha\succeq\mathcal{U}}\mathrm{Asc}_\nu (G,\varphi^{-1}(\alpha))\\
		&\ge\inf_{\alpha\in\mathcal{P}(X),\alpha\succeq\mathcal{U}}\mathrm{Asc}_\nu ^+(G,\varphi^{-1}(\mathcal{U}))\ge \mathrm{Asc}_{top} (G,\mathcal{U}).
	\end{aligned}
	$$	
\end{proof}

\begin{theorem}\label{thm3.3}
	Let $\pi:(X,T)\to(Y,R)$ be a factor map between two $\mathbb{Z}$-systems and  $\{c^n_S=2^{-n}:S\subseteq n^*\}$ the uniform system of coefficients. Then for every $\mathcal{U}\in \mathcal{C}^o_X $, there exists $\mu\in\mathcal{M}(X,T)$ such that $\mathrm{Asc}_\mathrm{top} (T,\mathcal{U}|Y)= \mathrm{Asc}_\mu^- (T,\mathcal{U}|Y)$.
\end{theorem}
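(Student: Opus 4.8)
The plan is to establish the two inequalities separately. One of them is free: by Lemma~\ref{lem3.1}(1) we have $H_\mu(\mathcal{U}_S|Y)\le\log N(\mathcal{U}_S|Y)$ for every finite $S\subseteq n^*$, so comparing the defining averages term by term gives $\mathrm{Asc}_\mu^-(T,\mathcal{U}|Y)\le\mathrm{Asc}_\mathrm{top}(T,\mathcal{U}|Y)$ for \emph{every} $\mu\in\mathcal{M}(X,T)$. Hence it suffices to produce one $\mu$ with $\mathrm{Asc}_\mu^-(T,\mathcal{U}|Y)\ge\mathrm{Asc}_\mathrm{top}(T,\mathcal{U}|Y)$; that the supremum over such $\mu$ is a maximum then follows from the upper semi-continuity of $\mu\mapsto\mathrm{Asc}_\mu^-(T,\mathcal{U}|Y)$ in Proposition~\ref{thm3.1}.

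For the nontrivial inequality I would first reduce to the case that $X$ is zero-dimensional, exactly as in Step~2 of the proof of Theorem~\ref{thm3.2}: pass to the Cantor extension $\varphi:(Z,T)\to(X,T)$ with $Z\subseteq C^{\mathbb Z}$ closed and $T$ the shift. By Proposition~\ref{prop3.2}(2) one has $\mathrm{Asc}_\mathrm{top}(T,\varphi^{-1}\mathcal{U}|Y)=\mathrm{Asc}_\mathrm{top}(T,\mathcal{U}|Y)$, and if $\nu\in\mathcal{M}(Z,T)$ realizes the desired equality for $\varphi^{-1}\mathcal{U}$, then $\mu=\varphi\nu\in\mathcal{M}(X,T)$ realizes it for $\mathcal{U}$: every partition $\beta\succeq\mathcal{U}_S$ pulls back to $\varphi^{-1}\beta\succeq\varphi^{-1}(\mathcal{U}_S)=(\varphi^{-1}\mathcal{U})_S$ with $H_\nu(\varphi^{-1}\beta|Y)=H_\mu(\beta|Y)$, so $H_\nu((\varphi^{-1}\mathcal{U})_S|Y)\le H_\mu(\mathcal{U}_S|Y)$, whence $\mathrm{Asc}_\nu^-(T,\varphi^{-1}\mathcal{U}|Y)\le\mathrm{Asc}_\mu^-(T,\mathcal{U}|Y)$, and together with the easy inequality this forces $\mathrm{Asc}_\mu^-(T,\mathcal{U}|Y)=\mathrm{Asc}_\mathrm{top}(T,\mathcal{U}|Y)$.

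With $X$ zero-dimensional and $\mathcal{U}=\{U_1,\dots,U_d\}$, the heart of the proof adapts the construction in Step~1 of Theorem~\ref{thm3.2}, the essential new point being that for the $\mathrm{Asc}^-$-quantity one must control $H_\mu(\beta|Y)$ for \emph{every} clopen partition $\beta$ refining each cover $\mathcal{U}_S$, not merely those of the form $\alpha_S$ with $\alpha\succeq\mathcal{U}$. So I would fix an enumeration $\{\beta_l\}_{l\ge1}$ of all finite clopen partitions of $X$, take $F_n=n^*$, and for each $n$ and each $S\subseteq F_n$ apply (the conditional form of) Lemma~\ref{lem3.4} to $\mathcal{U}_S$ and to the partitions among $\beta_1,\dots,\beta_n$ that refine $\mathcal{U}_S$, obtaining a finite $B_{n,S}\subseteq X$ with $|B_{n,S}|\ge N(\mathcal{U}_S|Y)/n$ whose points lie in distinct atoms of each such $\beta_l$. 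Putting $\nu_{n,S}=|B_{n,S}|^{-1}\sum_{x\in B_{n,S}}\delta_x$ and $\mu_n=\frac1{|F_n|}\sum_{S\subseteq F_n}2^{-(|F_n|-1)}\sum_{i\in S}T^i\nu_{n,S}$, a weak$^*$-limit $\mu=\lim_j\mu_{n_j}$ lies in $\mathcal{M}(X,T)$ by the F{\o}lner property (as in Theorem~\ref{thm3.2}); running the cocycle/shift estimates \eqref{eq5.1}--\eqref{eq5.3}, together with the concavity of $\mu\mapsto H_\mu(\alpha|Y)$ from Lemma~\ref{lem3.3}(1), then shows that for each fixed $l$ the value $H_\mu(\beta_l|Y)$ is at least $\log N(\mathcal{U}_S|Y)$ up to an additive $\log n$ (negligible after division by $|F_n|$) whenever $\beta_l\succeq\mathcal{U}_S$. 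Since the $\{\beta_l\}$ exhaust all clopen refinements of each $\mathcal{U}_S$, and these suffice for the infimum defining $H_\mu(\mathcal{U}_S|Y)$, feeding this back into the defining averages yields $\mathrm{Asc}_\mathrm{top}(T,\mathcal{U}|Y)\le\mathrm{Asc}_\mu^-(T,\mathcal{U}|Y)$.

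I expect the main obstacle to be precisely this passage from ``finitely many partitions per stage'' to the full infimum over all $\beta\succeq\mathcal{U}_S$: in Theorem~\ref{thm3.2} a single enumeration of partitions refining $\mathcal{U}$ was enough because $\mathrm{Asc}_\mu^+$ is an infimum over exactly such partitions, whereas here the relevant family genuinely depends on $S$, so the enumeration must run over all clopen partitions of $X$ and the stage-$n$ truncation must be balanced against the loss $1/n$ in $|B_{n,S}|\ge N(\mathcal{U}_S|Y)/n$. One must also set up the conditional versions of Lemma~\ref{lem3.4} and of the entropy estimates with care (the displays in Theorem~\ref{thm3.2} are written unconditionally). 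Note that one cannot shortcut this by averaging the measures furnished by Theorem~\ref{thm3.2} over the covers $\mathcal{U}_S$, since the concavity of $\mathrm{Asc}^+$ makes that estimate hopelessly lossy; the restriction to $G=\mathbb Z$ enters through the clean window $F_n=n^*$ with uniform coefficients, which keep the above bookkeeping tractable and make the power-type identities of Proposition~\ref{prop3.2}(4)--(7) available.
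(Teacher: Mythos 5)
Your easy inequality and the reduction to the zero-dimensional case are fine, but the core of your argument has a genuine gap. The estimates \eqref{eq5.1}--\eqref{eq5.3} you plan to ``run'' are not available for an arbitrary clopen partition $\beta_l\succeq\mathcal{U}_S$: inequality \eqref{eq5.2} works only because the partition there has the product form $(\alpha_l)_S=\bigvee_{g\in S}g^{-1}\alpha_l$, which lets one split its entropy over translated subwindows $Ag\subseteq S$ and transport the empirical measures by $T^i$; this translation structure is exactly what allows the lower bound on $H_{\nu_{n,S}}((\alpha_l)_S)$ to survive the passage to $\mu_n$ and then to the weak$^*$ limit $\mu$. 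A general refinement $\beta_l$ of $\mathcal{U}_S$ does not decompose along the orbit, so even after your (yet-to-be-proved conditional) version of Lemma \ref{lem3.4} gives $H_{\nu_{n,S}}(\beta_l)\ge\log N(\mathcal{U}_S|Y)-\log n$ for the \emph{empirical} measures, there is no mechanism converting this into the statement you actually need, namely $H_\mu(\beta_l|Y)\gtrsim\log N(\mathcal{U}_S|Y)$ for the \emph{limit} measure $\mu$ and for every fixed $S$. This is precisely why $\mathrm{Asc}^-$ (like $h_\mu^-$ in local entropy theory) is not reached by the direct Misiurewicz-type construction, and your own closing paragraph flags this step without resolving it.

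The route you explicitly discard is in fact the one that works, and it is the paper's proof: since $\mathrm{Asc}_*(G,\alpha|Y)$ is \emph{affine} on $\mathcal{M}(X,T)$ (Lemma \ref{lem3.3}(2)), averaging is not lossy. One applies Theorem \ref{thm3.2} to the systems $(X,T^k)$ and the covers $\mathcal{U}_S$, averages the translates $\mu_k\circ T^{-i}$ to land in the sets
\[
M(k,s)=\Bigl\{\mu:\tfrac{1}{k}\sum_{0\in S\subseteq k^*}\tfrac{1}{2^{k-1}}\mathrm{Asc}_\mu\bigl(T^k,\textstyle\bigvee_{i\in S}T^{-i}\alpha_{s(i)}\big|Y\bigr)\ge\mathrm{Asc}_\mathrm{top}(T,\mathcal{U}|Y)\Bigr\},
\]
which are closed by the upper semi-continuity of Proposition \ref{thm3.1}, shows the nesting $M(kk_1)\subseteq M(k_1)$, extracts by compactness a single $\mu\in\bigcap_k M(k)$, and then converts the resulting bounds $\frac{1}{k}\sum_{0\in S\subseteq k^*}2^{-(k-1)}\mathrm{Asc}^+_\mu(T^k,\mathcal{U}_S|Y)\ge\mathrm{Asc}_\mathrm{top}(T,\mathcal{U}|Y)$ into the desired $\mathrm{Asc}^-_\mu(T,\mathcal{U}|Y)\ge\mathrm{Asc}_\mathrm{top}(T,\mathcal{U}|Y)$ via the identity of Proposition \ref{prop3.2}(6). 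So the fix is not better bookkeeping in your enumeration of partitions, but replacing the direct construction by this powers-plus-compactness argument (or supplying an entirely new mechanism to pass lower entropy bounds for arbitrary refinements of $\mathcal{U}_S$ through the weak$^*$ limit, which your sketch does not provide).
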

\begin{proof}
	
	First, we assume that $X$ is a zero-dimension space. 
	
	Let $\mathcal{U}=\{U_1,\dots,U_d\}\in \mathcal{C}^o_X $. For any $k\in\mathbb{N}$, $S\subseteq k^*$ and $\mu\in\mathcal{M}(X,T)$, it is easy to show that 
	$$
	\mathrm{Asc}_\mu^+ (T^k,\mathcal{U}_S|Y)=\inf_{s\in\mathbb{N}^{S}}\mathrm{Asc}_\mu \left(T^k,\bigvee_{i\in S}T^{-i}{\alpha_{s}}_{(i)}|Y\right).
	$$
	For any $k\in\mathbb{N}$, $S\subseteq k^*$ and $s\in\mathbb{N}^{S}$, we set 
	$$
	\begin{aligned}
		M(k,s)&:=\left\{\mu:\frac{1}{k}\sum_{0\in S\subseteq k^*}\frac{1}{2^{k-1}}\mathrm{Asc}_\mu \left(T^k,\bigvee_{i\in S}T^{-i}{\alpha_{s}}_{(i)}|Y\right)\ge\mathrm{Asc}_\mathrm{top} (T,\mathcal{U}|Y) \right\},
	\end{aligned}
	$$
	where $\mu\in\mathcal{M}(X,T)$.
	By Theorem \ref{thm3.2}, we shall obtain that there exists $\mu_k\in\mathcal{M}(X,T^k)$ such that $\mathrm{Asc}_{\mu_k}^+ (T^k,\mathcal{U}_S|Y)\ge\mathrm{Asc}_\mathrm{top} (T^k,\mathcal{U}_S|Y)$. Since $\bigvee\limits_{i\in S}T^{-i}{\alpha_{s}}_{(i)}\succeq\mathcal{U}_S$ for each $s\in\mathbb{N}^{S}$, we have 
	$$
	\mathrm{Asc}_{\mu_k}\left(T^k,\bigvee_{i\in S}T^{-i}{\alpha_{s}}_{(i)}|Y\right)\ge\mathrm{Asc}_\mathrm{top} (T^k,\mathcal{U}_S|Y).
	$$
	Let $\nu_{k,S}=\frac{1}{|S|}\sum\limits_{i\in S}\mu_k\circ T^{-i}$. Then $\mu_k\circ T^{-i}\in\mathcal{M}(X,T^k)$ for each $i\in S$. Let $s=({ s}(0),\dots,{ s}(|S|-1))$. We define 
	$$
	P^j_{s}=\underbrace{ s(|S|- j) s(|S|- j+1)\cdots s(|S|- 1)}_{j}\underbrace{ s(0)s(1)\cdot s(|S|-j-1)}_{|S|-j} .
	$$
	Then $P^j_{s}\in\mathbb{N}^{S}$. Set $P^0_{s}=s$. Combining Proposition \ref{prop3.2}, it is not hard to show that 
	$$
	\begin{aligned}
		\mathrm{Asc}_{{\mu_{k}}\cdot T^{-s(j)}} \left(T^k,\bigvee_{i\in S}T^{-i}{\alpha_{s}}_{(i)}|Y\right)&=\mathrm{Asc}_{\mu_{k}} \left(T^k,\bigvee_{i\in S}T^{-i}{\alpha_{P^j_{s}}}_{(i)}|Y\right)\ge\mathrm{Asc}_\mathrm{top} (T^k,\mathcal{U}_S|Y),
	\end{aligned}
	$$
	$$
	\begin{aligned}
		\mathrm{Asc}_{\nu_{k,S}} \left(T^k,\bigvee_{i\in S}T^{-i}{\alpha_{s}}_{(i)}|Y\right)&=\frac{1}{|S|}\sum\limits_{i\in S}\mathrm{Asc}_{\mu_k\circ T^{-i}}\left(T^k,\bigvee_{i\in S}T^{-i}{\alpha_{s}}_{(i)}|Y\right)\ge\mathrm{Asc}_\mathrm{top} (T^k,\mathcal{U}_S|Y).
	\end{aligned}
	$$
	
	This implies that $\nu_{k,S}\in\bigcap\limits_{s\in\mathbb{N}^{S}}M(k,s)$. Write $M(k)=\bigcap\limits_{s\in\mathbb{N}^{S}}M(k,s)\neq\emptyset$ is a subset of $\mathcal{M}(X,T)$.
	
	For each $s\in\mathbb{N}^{S}$, $\bigvee\limits_{i\in S}T^{-i}{\alpha_{s}}_{(i)}$ is a clopen  cover. So the map $\mu\mapsto\mathrm{Asc}_{\mu}\left(T^k,\bigvee\limits_{i\in S}T^{-i}{\alpha_{s}}_{(i)}|Y\right)$ is a u.s.c. function on $\mathcal{M}(X,T^k)$. Since $\mathcal{M}(X,T)\subseteq\mathcal{M}(X,T^k)$, $\mathrm{Asc}_* \left(T^k,\bigvee\limits_{i\in S}T^{-i}{\alpha_{s}}_{(i)}|Y\right)$ is also a u.s.c. function on $\mathcal{M}(X,T)$ by Theorem \ref{thm3.1}. So
	$M(k,s)$ is closed in $\mathcal{M}(X,T)$ for each $s\in\mathbb{N}^{S}$ and $M(k,s)\neq\emptyset$ is closed in $\mathcal{M}(X,T)$.
	
	Now let $k,k_1,k_2\in\mathbb{N}$ with $k_2=kk_1$ and $\mu\in M(k_2)$. Let $B\subseteq k_2^*$. Then for each $i\in B$, there exist $ E\subseteq k^*$, $j\in E$,$b\in\mathbb{N}^B$ and $i_0\in S$ such that $i=k_1j+i_0$ and $ b(i)=s(i_0)$. So
	$$
	\begin{aligned}
		\frac{2}{k_1}&\sum_{0\in S\subseteq{k_1}^*}\frac{1}{2^{k_1}}\mathrm{Asc}_{\mu}\left(T^{k_1},\bigvee_{i\in S}T^{-i}{\alpha_{s}}_{(i)}|Y\right)\\
		&\ge\frac{1}{k_2}\sum_{0\in B\subseteq{k_2}^*}\frac{1}{2^{k_2-1}}\mathrm{Asc}_{\mu} \left(T^{k_2},\bigvee_{i\in B}T^{-i}{\alpha_{b_{(i)}}}|Y\right)\ge\mathrm{Asc}_\mathrm{top}(T,\mathcal{U}|Y).
	\end{aligned}
	$$
	Hence $\mu \in M(k_1,b)$ for each $b\in\mathbb{N}^B$. That is, $\mu\in M(k_1)$. Then $M(k_2)\subseteq M(k_1)$. Since $M(k_1k_2)\subseteq M(k_1)\cap M(k_2)$ for any $k_1,k_2\in \mathbb{N}$, we have $\cap_{k\in\mathbb{N}}M(k)\neq\emptyset$. Take $\mu\in\cap_{k\in\mathbb{N}}M(k)$. For any $s\in\mathbb{N}^S$, we have
	$$\begin{aligned}
	\frac{1}{k}\sum_{0\in S\subseteq k^*}\frac{1}{2^{k-1}}\mathrm{Asc}_\mu^+ \left(T^k,\mathcal{U}_S|Y\right)&=\inf_{s\in\mathbb{N}^S}\frac{1}{k}\sum_{0\in S\subseteq k^*}\frac{1}{2^{k-1}}\mathrm{Asc}_\mu\left(T^k,\bigvee_{i\in S}T^{-i}{\alpha_{s}}_{(i)}|Y\right)\ge\mathrm{Asc}_\mathrm{top}(T,\mathcal{U}|Y).	\end{aligned}$$
	By Proposition \ref{prop3.2}, we have 
	$$
	\mathrm{Asc}_\mu^- \left(T,\mathcal{U}|Y\right)=\lim_{k\to\infty}\frac{1}{k}\sum_{0\in S\subseteq k^*}\frac{1}{2^{k-1}}\mathrm{Asc}_\mu^+ \left(T^k,\mathcal{U}_S|Y\right)\ge\mathrm{Asc}_\mathrm{top}(T,\mathcal{U}|Y).
	$$
	Thus, we have $\mathrm{Asc}_\mu^-\left(T,\mathcal{U}|Y\right)=\mathrm{Asc}_\mathrm{top}(T,\mathcal{U}|Y)$.
	
	Similar to the general case in Theorem \ref{thm3.2}, we finish the proof.

\end{proof}

\subsection{Ergodic decompositions}
This section will demonstrate the ergodic decompositions of two kinds of local measure-theoretic average sample complexity. First, we will present the following lemma based on the idea introduced in \cite{huang2011local}. It plays an important role in the proof of ergodic decompositions.

\begin{lemma}\label{lem3.5}
	Let $\pi:(X,G)\to (Y,G)$ be a factor map between two $G$-system and $\alpha,\beta\in\mathcal{P}(X)$. Assume that $S=\{g_1,\cdots, g_{|S|}\}$ for each $S\subseteq F_n$, then 
	$$\begin{aligned}
\mathrm{Asc} _\mu (G,\alpha \vee \beta )&=\mathrm{Asc} _\mu (G,\beta)+\lim_{n\to\infty}\frac{1}{|F_n|}\sum_{S\subseteq F_n}c^{F_n}_S\sum_{j=2}^{|S|}H_\mu\left(g_j^{-1}\alpha|\bigvee_{i\in S\setminus\{g_j,\cdots,g_{|S|}\}}g_i^{-1}\alpha\vee\beta_S\right).
\end{aligned}$$
\end{lemma}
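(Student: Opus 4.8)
The plan is to reduce the identity to two successive applications of Shannon's chain rule for conditional entropy, followed by one elementary F{\o}lner-averaging estimate that disposes of a single leading summand. Throughout, $\mu$ is the $G$-invariant measure implicit in the notation $\mathrm{Asc}_\mu$, cf.\ Definition \ref{def3.2}. First I would use the identity $(\alpha\vee\beta)_S=\alpha_S\vee\beta_S$, valid for every $S\in\mathcal{F}(G)$, together with $\alpha_S=\bigvee_{j=1}^{|S|}g_j^{-1}\alpha$ for the fixed enumeration $S=\{g_1,\dots,g_{|S|}\}$. Applying the chain rule twice then gives, for each $S\subseteq F_n$,
\begin{align*}
H_\mu\big((\alpha\vee\beta)_S\big)
&=H_\mu(\beta_S)+H_\mu(\alpha_S\mid\beta_S)\\
&=H_\mu(\beta_S)+H_\mu\big(g_1^{-1}\alpha\mid\beta_S\big)
+\sum_{j=2}^{|S|}H_\mu\Big(g_j^{-1}\alpha\ \Big|\ \bigvee_{i\in S\setminus\{g_j,\dots,g_{|S|}\}}g_i^{-1}\alpha\vee\beta_S\Big),
\end{align*}
where I use the convention that an empty join is the trivial partition, which is precisely why the term for $j=1$, namely $H_\mu(g_1^{-1}\alpha\mid\beta_S)$, is singled out. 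If the version conditioned on $(Y,G)$ is wanted, one simply adjoins $\pi^{-1}\mathcal{B}(Y)$ to every conditioning $\sigma$-algebra above; nothing else changes and the estimates below are unaffected.

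Next I would multiply through by $c_S^{F_n}/|F_n|$ and sum over $S\subseteq F_n$. Since $\sum_{S\subseteq F_n}c_S^{F_n}=1$ and, by $G$-invariance of $\mu$ together with monotonicity of conditional entropy in the conditioning $\sigma$-algebra, $H_\mu(g_1^{-1}\alpha\mid\beta_S)\le H_\mu(g_1^{-1}\alpha)=H_\mu(\alpha)\le\log|\alpha|$, the contribution of the $j=1$ terms is controlled by
$$0\le\frac{1}{|F_n|}\sum_{S\subseteq F_n}c_S^{F_n}H_\mu\big(g_1^{-1}\alpha\mid\beta_S\big)\le\frac{\log|\alpha|}{|F_n|}\longrightarrow 0,$$
because $|F_n|\to\infty$ along any F{\o}lner sequence. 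By Remark \ref{rmk3.2}(1) (equivalently, Theorem \ref{ow} applied to the associated m.n.i.s.a.\ set function), the limits defining $\mathrm{Asc}_\mu(G,\alpha\vee\beta)$ and $\mathrm{Asc}_\mu(G,\beta)$ exist, so that $\frac{1}{|F_n|}\sum_{S\subseteq F_n}c_S^{F_n}H_\mu\big((\alpha\vee\beta)_S\big)\to\mathrm{Asc}_\mu(G,\alpha\vee\beta)$ and $\frac{1}{|F_n|}\sum_{S\subseteq F_n}c_S^{F_n}H_\mu(\beta_S)\to\mathrm{Asc}_\mu(G,\beta)$. Hence the remaining sum $\frac{1}{|F_n|}\sum_{S\subseteq F_n}c_S^{F_n}\sum_{j=2}^{|S|}H_\mu(\cdots)$ converges as well, and passing to the limit in the displayed decomposition and transposing $\mathrm{Asc}_\mu(G,\beta)$ yields the asserted formula.

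I do not expect a genuine obstacle here: the lemma is a reorganization of Shannon's chain rule adapted to the F{\o}lner average. The two points needing a little care are keeping the enumeration of each $S$ consistent between the two chain-rule expansions and the statement, and the convergence bookkeeping at the end — one cannot interchange $\lim_n$ with the finite $j$-sum termwise, so I deduce convergence of the $j\ge 2$ aggregate by subtraction from the three quantities already known to converge. Alternatively one could verify directly that $F_n\mapsto\sum_{S\subseteq F_n}c_S^{F_n}\sum_{j=2}^{|S|}H_\mu(\cdots)$ is monotone, non-negative, $G$-invariant and sub-additive and invoke Theorem \ref{ow}, but the subtraction route is shorter. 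The chain rule and the elementary monotonicity properties of conditional entropy are used exactly as in \cite{HUANG_YE_ZHANG_2006,walters1982introduction}.
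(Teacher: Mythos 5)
Your proof is correct and follows essentially the same route as the paper's: the identical chain-rule decomposition $H_\mu(\alpha_S\vee\beta_S)=H_\mu(\beta_S)+H_\mu(g_1^{-1}\alpha\mid\beta_S)+\sum_{j=2}^{|S|}H_\mu\bigl(g_j^{-1}\alpha\mid\bigvee_{i\in S\setminus\{g_j,\dots,g_{|S|}\}}g_i^{-1}\alpha\vee\beta_S\bigr)$, followed by averaging with $c_S^{F_n}/|F_n|$ and killing the $j=1$ term via $G$-invariance since $\sum_{S\subseteq F_n}c_S^{F_n}=1$ forces its contribution below $H_\mu(\alpha)/|F_n|\to0$. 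Your extra care about deducing convergence of the $j\ge2$ aggregate by subtraction just makes explicit what the paper leaves implicit.
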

\begin{proof}
	Fix $S\subseteq F_n$. By the definition, we note that 
	\begin{equation}\label{eq3.4}
		\begin{aligned}
			H_\mu\left(\alpha_S\vee\beta_S\right)=H_\mu(\beta_S)+H_\mu\left(g^{-1}_1\alpha\right|\beta_S)+\sum_{j=2}^{|S|}H_\mu\left(g_j^{-1}\alpha|\bigvee_{i\in S\setminus\{g_j,\cdots,g_{|S|}\}}g_i^{-1}\alpha \vee\beta_S\right).
		\end{aligned}
	\end{equation}
	Since $\lim\limits_{n\to\infty}\frac{1}{|F_n|}\sum\limits_{S\subseteq F_n}c^{F_n}_S H_\mu\left(g^{-1}\alpha\right)=0$ for each $g\in G$, we multiply \eqref{eq3.4} on both side by $c^{F_n}_S$, divide by $|F_n|$ and sum over $S\subseteq F_n$. Then, by letting $n\to\infty$, we can complete the proof.
\end{proof}
%pinsker公式得写一下（经典结论得看一下成不成立）写成G系统
\begin{theorem}\label{thm3.4}
	Let $\mu\in\mathcal{M}(X,G)$ and $\mathcal{U}\in\mathcal{C}_X$. If $\mu =\int_{\Omega }\mu _{\omega }dm(\omega )$ is the ergodic decomposition of $\mu$ then
	$$\mathrm{Asc} _\mu ^+(G,\mathcal{U}|Y )=\int_{\Omega }\mathrm{Asc}_{\mu _{\omega}} ^+(G,\mathcal{U}|Y )dm(\omega ).$$
\end{theorem}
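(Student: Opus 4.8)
The plan is to adapt the proof of the ergodic decomposition of local measure-theoretic entropy (\cite{huang2011local,HUANG_YE_ZHANG_2006}) to the averaged functional $\mathrm{Asc}$. Write $\mathcal{U}=\{U_1,\dots,U_d\}$. I would start with two reductions. First, in the infimum defining $\mathrm{Asc}_\mu^+(G,\mathcal{U}|Y)$ it is enough to range over partitions \emph{subordinate} to $\mathcal{U}$, i.e.\ of the form $\{A_1,\dots,A_d\}$ with $A_i\subseteq U_i$: given $\alpha\succeq\mathcal{U}$, group its atoms according to the least index $i$ with $A\subseteq U_i$ to obtain such a partition, which is coarser than $\alpha$ and hence has no larger $\mathrm{Asc}_\mu(G,\cdot|Y)$ because $H_\mu((\cdot)_S|Y)$ is monotone. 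Second, using Proposition~\ref{equation}, fix once and for all a \emph{countable} family $\mathcal{D}=\{\beta^{(1)},\beta^{(2)},\dots\}$ of partitions subordinate to $\mathcal{U}$ (with atoms from a fixed countable generating algebra of $\mathcal{B}_X$) such that $\mathrm{Asc}_\nu^+(G,\mathcal{U}|Y)=\inf_{j\ge1}\mathrm{Asc}_\nu(G,\beta^{(j)}|Y)$ for \emph{every} $\nu\in\mathcal{M}(X,G)$. Since $\nu\mapsto\mathrm{Asc}_\nu(G,\beta^{(j)}|Y)$ is Borel (an $\inf_n$ of the upper semi-continuous maps $\nu\mapsto\frac1{|F_n|}\sum_{S\subseteq F_n}c_S^{F_n}H_\nu(\beta^{(j)}_S|Y)$, cf.\ \cite{HUANG_YE_ZHANG_2006}), the function $\omega\mapsto\mathrm{Asc}_{\mu_\omega}^+(G,\mathcal{U}|Y)$ is Borel, and it is bounded by $\tfrac12\log d$, so the integral on the right is well defined.

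The inequality ``$\ge$'' is easy: for a partition $\beta\succeq\mathcal{U}$ the functional $\mathrm{Asc}_\ast(G,\beta|Y)$ is affine on $\mathcal{M}(X,G)$ by Lemma~\ref{lem3.3}(2), so the barycentre formula gives $\mathrm{Asc}_\mu(G,\beta|Y)=\int_\Omega\mathrm{Asc}_{\mu_\omega}(G,\beta|Y)\,dm(\omega)\ge\int_\Omega\mathrm{Asc}_{\mu_\omega}^+(G,\mathcal{U}|Y)\,dm(\omega)$, and taking the infimum over $\beta$ finishes this direction.

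For ``$\le$'', fix $\epsilon>0$ and let $\Omega_j^0=\{\omega:\mathrm{Asc}_{\mu_\omega}(G,\beta^{(j)}|Y)<\mathrm{Asc}_{\mu_\omega}^+(G,\mathcal{U}|Y)+\epsilon\}$; disjointifying the $\Omega_j^0$ (which cover $\Omega$) gives a Borel partition $\Omega=\bigsqcup_{j\ge1}\Omega_j$ on which $\beta^{(j)}$ is within $\epsilon$ of optimal. Choose $k$ with $m(\bigcup_{j>k}\Omega_j)<\delta$ and put $\Omega_{k+1}:=\bigcup_{j>k}\Omega_j$, $\beta^{(k+1)}:=\beta^{(1)}$. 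Realizing the ergodic decomposition in its canonical form — $(\Omega,m)=(X,\mu)$ with $\mu_\omega$ the conditional measures over the $\sigma$-algebra $\mathcal{I}$ of $G$-invariant Borel sets — each $\Omega_j$ is a $G$-invariant-mod-$\mu$ Borel set; replacing it by a genuinely $G$-invariant set of the same $\mu$-measure yields a $G$-invariant finite partition $\eta=\{X_1,\dots,X_{k+1}\}$ of $X$ with $\mu(X_j)=m(\Omega_j)$ and $\mu_j:=\mu(\cdot\cap X_j)/\mu(X_j)=\frac1{m(\Omega_j)}\int_{\Omega_j}\mu_\omega\,dm(\omega)\in\mathcal{M}(X,G)$. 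Now glue: with $\beta^{(j)}=\{B_1^{(j)},\dots,B_d^{(j)}\}$, set $\gamma=\{C_1,\dots,C_d\}$, $C_i=\bigcup_{j=1}^{k+1}(B_i^{(j)}\cap X_j)$; then $\gamma\succeq\mathcal{U}$, and since each $X_j$ is $G$-invariant, for every $S\in\mathcal{F}(G)$ the partition $\gamma_S$ coincides with $\beta^{(j)}_S$ on $X_j$ while $\eta_S=\eta$. Hence, by the chain rule and the splitting of conditional entropy over the atoms of $\eta$,
\[
H_\mu(\gamma_S|Y)\le H_\mu(\gamma_S\vee\eta|Y)=H_\mu(\eta|Y)+\sum_{j=1}^{k+1}\mu(X_j)H_{\mu_j}(\beta^{(j)}_S|Y)\le\log(k+1)+\sum_{j=1}^{k+1}\mu(X_j)H_{\mu_j}(\beta^{(j)}_S|Y).
\]
Averaging against $c_S^{F_n}/|F_n|$ over $S\subseteq F_n$ and letting $n\to\infty$ (the constant term dies since $|F_n|\to\infty$) gives $\mathrm{Asc}_\mu^+(G,\mathcal{U}|Y)\le\mathrm{Asc}_\mu(G,\gamma|Y)\le\sum_{j=1}^{k+1}\mu(X_j)\mathrm{Asc}_{\mu_j}(G,\beta^{(j)}|Y)$. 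By affineness again $\mu(X_j)\mathrm{Asc}_{\mu_j}(G,\beta^{(j)}|Y)=\int_{\Omega_j}\mathrm{Asc}_{\mu_\omega}(G,\beta^{(j)}|Y)\,dm(\omega)$, which for $j\le k$ is $\le\int_{\Omega_j}\mathrm{Asc}_{\mu_\omega}^+(G,\mathcal{U}|Y)\,dm(\omega)+\epsilon\,m(\Omega_j)$ and for $j=k+1$ is $\le\tfrac12(\log d)\delta$; summing (using $\mathrm{Asc}_{\mu_\omega}^+\ge0$) and letting $\delta\to0$ then $\epsilon\to0$ yields $\mathrm{Asc}_\mu^+(G,\mathcal{U}|Y)\le\int_\Omega\mathrm{Asc}_{\mu_\omega}^+(G,\mathcal{U}|Y)\,dm(\omega)$.

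I expect the main obstacle to be the ``$\le$'' direction, namely replacing the $\omega$-dependent near-optimal partitions by a single partition of $X$. This requires (i) the uniform-in-$\nu$ countable family $\mathcal{D}$ and the Borel measurability of $\omega\mapsto\mathrm{Asc}_{\mu_\omega}^+(G,\mathcal{U}|Y)$, used to cut $\Omega$ into finitely many Borel pieces on each of which a single $\beta^{(j)}$ is nearly optimal, and (ii) realizing that cut by a genuinely $G$-invariant partition of $X$, which is what makes $\gamma_S$ agree with $\beta^{(j)}_S$ on each piece and legitimizes the conditional-entropy estimate above. The remaining ingredients — monotonicity and the chain rule for $H_\mu(\cdot|Y)$, affineness from Lemma~\ref{lem3.3}(2), and the vanishing of the $O(1)/|F_n|$ term — are routine.
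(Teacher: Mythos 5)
Your argument is correct and follows the same overall strategy as the paper's proof: construct a countable family of partitions subordinate to $\mathcal{U}$ realizing $\mathrm{Asc}^+_\nu(G,\mathcal{U}|Y)$ as a countable infimum simultaneously for all $\nu\in\mathcal{M}(X,G)$, get the inequality $\ge$ from the per-partition decomposition formula, and get $\le$ by cutting $\Omega$ into pieces on which one member of the family is $\epsilon$-optimal and gluing those members into a single partition finer than $\mathcal{U}$. The differences are only in bookkeeping: the paper uses countably many pieces and glues along the mutually singular supports of the averaged measures $\nu_j$, evaluating $\mathrm{Asc}_\mu(G,\alpha|Y)=\sum_j m(\Omega_j)\mathrm{Asc}_{\nu_j}(G,\alpha_{n_j}|Y)$ by (countable) affineness, and obtains the easy direction via Fatou; you instead use finitely many genuinely $G$-invariant sets $X_j$, an auxiliary invariant partition $\eta$, and the conditional chain rule with a $\log(k+1)$ error killed by dividing by $|F_n|$ — the same idea with a finite cut and a small $\delta$-tail instead of a countable cut. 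One point you should make explicit: the per-partition identity $\mathrm{Asc}_\mu(G,\beta|Y)=\int_\Omega\mathrm{Asc}_{\mu_\omega}(G,\beta|Y)\,dm(\omega)$, which you use both in the easy direction and (through the measures $\mu_j$) on each piece, does not follow from the finite affineness of Lemma~\ref{lem3.3} alone: concavity of $H_\cdot(\cdot|Y)$ yields only the inequality $\ge$ over an integral decomposition, and the reverse inequality is exactly what the paper proves in the first half of its argument via the chain-rule Lemma~\ref{lem3.5} and dominated convergence (a Jacobs-type theorem); so this step should be quoted or proved rather than attributed to an abstract barycentre formula. With that ingredient granted, the remaining steps (subordinate countable family via Proposition~\ref{equation}, measurability of $\omega\mapsto\mathrm{Asc}^+_{\mu_\omega}(G,\mathcal{U}|Y)$, the bound $\tfrac12\log d$ from the symmetry of the coefficient system, and the chain-rule estimate) are sound and at the same level of rigor as the paper's own treatment.
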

\begin{proof}
	First, we shall show that for each $\alpha\in\mathcal{P}(X)$, we have $\mathrm{Asc} _\mu (G,\alpha|Y )=\int_{\Omega }\mathrm{Asc}_{\mu _{\omega}}(G,\alpha|Y )dm(\omega )$. Let $\{\beta_j\}_{j\in\mathbb{N}}$ be an increasing sequence with $\beta_j\in\mathcal{P}(X)$ and $\lim\limits_{j\to\infty}diam(\beta_j)=0$.  Assume that $S=\{g_1,\cdots, g_{|S|}\}$ for each $S\subseteq F_n$. By Lemma \ref{lem3.5} and the dominated convergence theorem, one has
	$$
	\begin{aligned}
		&\mathrm{Asc} _\mu (G,\alpha|Y )=\lim_{j\to\infty}\lim_{n\to\infty}\frac{1}{|F_n|}\sum_{S\subseteq F_n}c^{F_n}_S\sum_{j=2}^{|S|}H_\mu\left(g_j^{-1}\alpha|\bigvee_{i\in \{g_1,\cdots,g_{j-1}\}}g_i^{-1}\alpha\vee\pi^{-1}(({\beta_j})_{S}) \right)\\
		=&\lim_{j\to\infty}[\mathrm{Asc} _\mu (G,\alpha^-\vee( \pi^{-1}(\beta_j) )-\mathrm{Asc} _\mu (G,\pi^{-1}(\beta_j) )]\\
		=&\lim_{j\to\infty}\int_\Omega[\mathrm{Asc} _{\mu_\omega}(G,\alpha^-\vee( \pi^{-1}(\beta_j) )-\mathrm{Asc} _{\mu_\omega}(G,\pi^{-1}(\beta_j) )]dm(\omega)\\
		=&\int\limits_\Omega\lim_{j\to\infty}\lim_{n\to\infty}\frac{1}{|F_n|}\sum_{S\subseteq F_n}c^{F_n}_S\sum_{j=2}^{|S|}H_{\mu_\omega}\left(g_j^{-1}\alpha|\bigvee_{i\in \{g_1,\cdots,g_{j-1}\}}g_i^{-1}\alpha\vee\pi^{-1}(({\beta_j})_{S}) \right)dm(\omega)\\
		=&\int_\Omega\mathrm{Asc} _{\mu_\omega} (T,\alpha|Y )dm(\omega).
	\end{aligned}
	$$
	Taking an increasing sequence  $\{\gamma_j\}_{j\in\mathbb{N}}\subseteq\mathcal{P}(X)$ such that $\lim\limits_{j\to\infty}diam(\gamma_j)=0$, then
	$$
	\begin{aligned}
		\mathrm{Asc} _\mu (G,X|Y )&=\lim_{j\to\infty}\mathrm{Asc} _\mu (G,\gamma_j|Y )=\lim_{j\to\infty}\int_\Omega\mathrm{Asc} _{\mu _\omega}(G,\gamma_j|Y )dm(y)\\
		&=\int_\Omega\lim_{j\to\infty}\mathrm{Asc} _{\mu _\omega}(G,\gamma_j|Y )dm(y)=\int_\Omega\mathrm{Asc} _{\mu _\omega}(G,X|Y )dm(y).
	\end{aligned}
	$$
	
	Now we shall prove that for each $\mathcal{U}\in\mathcal{C}_X$, we have $\mathrm{Asc} _\mu ^+(G,\mathcal{U}|Y )=\int_{\Omega }\mathrm{Asc}_{\mu _{\omega}} ^+(G,\mathcal{U}|Y )dm(\omega ).$ 
    
    Since $X$ is a compact metric space, there exist $\{\alpha_n\}_{n\in\mathbb{N}}\subseteq\mathcal{U}^*$ which is $L^1(X,\mathcal{B}(X),\nu)$-dense in $\mathcal{U}^*$ for each $\nu\in\mathcal{M}(X,G)$. So for each $\nu\in\mathcal{M}(X,G)$, 
	$
	\mathrm{Asc} _\nu ^+(G,\mathcal{U}|Y )=\inf_{n\in\mathbb{N}}\mathrm{Asc} _\nu ^+(G,\alpha_n|Y ).
	$
	Set $\alpha_n=\{A^n_1,\dots,A^n_d\}$, $n\in\mathbb{N}$. By Fatou's lemma, we have
	$$
	\begin{aligned}
		\mathrm{Asc} _\nu ^+(G,\mathcal{U}|Y )&=\inf_{n\in\mathbb{N}}\mathrm{Asc} _\nu ^+(G,\alpha_n|Y )=\inf_{n\in\mathbb{N}}\int_\Omega\mathrm{Asc} _{\nu_\omega} ^+(G,\alpha_n|Y )dm(\omega)\\
		&\ge\int_\Omega\inf_{n\in\mathbb{N}}\mathrm{Asc} _{\nu_\omega} ^+(G,\alpha_n|Y )dm(\omega)=\int_\Omega\mathrm{Asc} _{\nu_\omega} ^+(G,\mathcal{U}|Y )dm(\omega).
	\end{aligned}
	$$
	
	On the other hand, for each $\varepsilon>0$, $n\in\mathbb{N}$ and $S\subseteq F_n$, we define $$B_n^\varepsilon=\{\omega\in\Omega:\mathrm{Asc} _{\nu_\omega} ^+(G,\alpha_n|Y )\le\mathrm{Asc} _{\nu_\omega} ^+(G,\mathcal{U}|Y )+\varepsilon\}.$$ We note that $m(\Omega\Delta \bigcup_{n\in\mathbb{N}}B_n^\varepsilon)=0$. There exist $\{\Omega_j\}_{j\in\mathbb{N}}\in\mathcal{P}(\Omega)$ with $m(\Omega)>0$ and $\{\alpha_{n_j}\}_{j\in \mathbb{N}}\subseteq \{\alpha_{n}\}_{n\in \mathbb{N}}$ such that for $m$-a.e. $\omega\in\Omega_j$ and $j\in\mathbb{N}$, we have $\mathrm{Asc} _{\nu_\omega} ^+(G,\alpha_{n_j}|Y )\le\mathrm{Asc} _{\nu_\omega} ^+(G,\mathcal{U}|Y )+\varepsilon$. So for each $j\in\mathbb{N}$, we define $\nu_j\in\mathcal{M}(X,T)$ by $\nu_j=\frac{1}{m(\Omega_j)}\int_{\Omega_j}\nu_\omega dm(\omega)$. So 
	$$
	\begin{aligned}
		\mathrm{Asc} _{\nu} ^+(G,\alpha_{n_j}|Y )&=\frac{1}{m(\Omega_j)}\int_{\Omega_j}\mathrm{Asc} _{\nu_\omega} ^+(G,\alpha_{n_j}|Y ) dm(\omega)\le\frac{1}{m(\Omega_j)}\int_{\Omega_j}\mathrm{Asc} _{\nu_\omega} ^+(G,\mathcal{U}|Y ) dm(\omega)+\varepsilon.
	\end{aligned}
	$$
	Since $\{\nu_n\}$ is mutually singular, there exist $\{X_n\}_{n\in\mathbb{N}}$ such that $\nu_n(X_n)=1$ and $\nu_n(X_k)=0$ for $k\neq n$. For $1\le i\le d$, we define $A_i=\bigcup_{j\ge1}(X_j\cap A_i^{n_j})$, then $\alpha=\{A_1,\dots,A_d\}\in\mathcal{U}^*$ and 
	$$
	\begin{aligned}
		\mathrm{Asc} _{\nu} ^+(G,\mathcal{U}|Y )&\le\mathrm{Asc} _{\nu} ^+(G,\alpha|Y )=\sum_{j\in\mathbb{N}}m(\Omega_j)\mathrm{Asc} _{\nu_j} ^+(T,\alpha_{n_j}|Y )\le\int_{\Omega_j}\mathrm{Asc} _{\nu_\omega} ^+(T,\mathcal{U}|Y )dm(\omega)+\varepsilon .
	\end{aligned}
	$$
	Letting $\varepsilon\to0$, we obtain that $\mathrm{Asc} _{\nu} ^+(G,\mathcal{U}|Y )\le\int_\Omega\mathrm{Asc} _{\nu_\omega} ^+(G,\mathcal{U}|Y ) dm(\omega)$. 
\end{proof}

\begin{theorem}
	
	Let $\mu\in\mathcal{M}(X,T)$, $\mathcal{U}\in\mathcal{C}_X$ and $\{c^n_S=\frac{1}{2^n}:S\subseteq n^*\}$. If $\mu =\int_{\Omega }\mu _{\omega }dm(\omega )$ is the ergodic decomposition of $\mu$, then
	$$\mathrm{Asc} _\mu ^-(T,\mathcal{U}|Y )=\int_{\Omega }\mathrm{Asc}_{\mu _{\omega}} ^-(T,\mathcal{U}|Y )dm(\omega ).$$
\end{theorem}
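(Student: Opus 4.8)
The plan is to reduce the statement to the ergodic decomposition of $\mathrm{Asc}^{+}$ already obtained in Theorem \ref{thm3.4}, using the identity of Proposition \ref{prop3.2}(6) to express $\mathrm{Asc}_{\mu}^{-}$ through $\mathrm{Asc}^{+}$ of the powers $T^{n}$, and then to interchange limit and integral by dominated convergence. Throughout, $\mu=\int_{\Omega}\mu_{\omega}\,dm(\omega)$ is the ergodic decomposition of $\mu$ with respect to $T$, and for $\nu\in\mathcal{M}(X,T)$ I write
\[
g_{n}(\nu):=\frac{1}{n}\sum_{0\in S\subseteq n^{*}}\frac{1}{2^{n-1}}\,\mathrm{Asc}_{\nu}^{+}(T^{n},\mathcal{U}_{S}|Y).
\]
By Proposition \ref{prop3.2}(6), applied both to $\mu$ and to each $\mu_{\omega}\in\mathcal{M}(X,T)$, one has $\mathrm{Asc}_{\mu}^{-}(T,\mathcal{U}|Y)=\lim_{n\to\infty}g_{n}(\mu)$ and $\mathrm{Asc}_{\mu_{\omega}}^{-}(T,\mathcal{U}|Y)=\lim_{n\to\infty}g_{n}(\mu_{\omega})$ for $m$-a.e. $\omega$. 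Hence it suffices to establish: (a) for every fixed $n\in\mathbb{N}$ and every $S\subseteq n^{*}$,
\[
\mathrm{Asc}_{\mu}^{+}(T^{n},\mathcal{U}_{S}|Y)=\int_{\Omega}\mathrm{Asc}_{\mu_{\omega}}^{+}(T^{n},\mathcal{U}_{S}|Y)\,dm(\omega),\qquad(\star)
\]
so that $g_{n}(\mu)=\int_{\Omega}g_{n}(\mu_{\omega})\,dm(\omega)$; and (b) $0\le g_{n}(\nu)\le \log N(\mathcal{U}|Y)$ for all $n$ and $\nu$, which I would get from Lemma \ref{lem3.1}(1) and the subadditivity of $S\mapsto\log N(\mathcal{U}_{S}|Y)$. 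Granting these, together with the (routine) measurability of $\omega\mapsto g_{n}(\mu_{\omega})$ and $\omega\mapsto\mathrm{Asc}_{\mu_{\omega}}^{-}(T,\mathcal{U}|Y)$ — deduced from the upper semi-continuity in Proposition \ref{thm3.1} and the measurability of $\omega\mapsto\mu_{\omega}$ — dominated convergence gives
\begin{align*}
\mathrm{Asc}_{\mu}^{-}(T,\mathcal{U}|Y)&=\lim_{n\to\infty}\int_{\Omega}g_{n}(\mu_{\omega})\,dm(\omega)=\int_{\Omega}\lim_{n\to\infty}g_{n}(\mu_{\omega})\,dm(\omega)\\
&=\int_{\Omega}\mathrm{Asc}_{\mu_{\omega}}^{-}(T,\mathcal{U}|Y)\,dm(\omega),
\end{align*}
which is the assertion.

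It remains to prove $(\star)$, and here I would exploit that the ergodic decomposition of $\mu$ over $T$ refines the one over $T^{n}$. Regarding $\mu$ as an element of $\mathcal{M}(X,T^{n})$, let $\mu=\int\mu'_{\xi}\,dm'(\xi)$ be its ergodic decomposition with respect to $T^{n}$; Theorem \ref{thm3.4}, applied to the $\mathbb{Z}$-system generated by $T^{n}$ and the cover $\mathcal{U}_{S}$, gives $\mathrm{Asc}_{\mu}^{+}(T^{n},\mathcal{U}_{S}|Y)=\int\mathrm{Asc}_{\mu'_{\xi}}^{+}(T^{n},\mathcal{U}_{S}|Y)\,dm'(\xi)$. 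On the other hand, for $m$-a.e. $\omega$ the measure $\mu_{\omega}$, being $T$-ergodic and in particular $T^{n}$-invariant, has its own $T^{n}$-ergodic decomposition $\mu_{\omega}=\int\mu'_{\xi}\,dm'_{\omega}(\xi)$, a finite average of cyclic $T$-translates of a single $T^{n}$-ergodic measure; the assignment $\omega\mapsto m'_{\omega}$ is measurable, and by uniqueness of the ergodic decomposition the barycentre identity $\int_{\Omega}m'_{\omega}\,dm(\omega)=m'$ holds. Applying Theorem \ref{thm3.4} once more, now to each $\mu_{\omega}\in\mathcal{M}(X,T^{n})$, gives $\mathrm{Asc}_{\mu_{\omega}}^{+}(T^{n},\mathcal{U}_{S}|Y)=\int\mathrm{Asc}_{\mu'_{\xi}}^{+}(T^{n},\mathcal{U}_{S}|Y)\,dm'_{\omega}(\xi)$; integrating this over $\omega$ with respect to $m$ and using $\int_{\Omega}m'_{\omega}\,dm(\omega)=m'$ recovers exactly the right-hand side of $(\star)$.

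The main obstacle is precisely the compatibility statement behind $(\star)$: one must verify carefully that $\omega\mapsto m'_{\omega}$ is Borel measurable and that $m'=\int_{\Omega}m'_{\omega}\,dm(\omega)$, i.e. that disintegrating each $T$-ergodic component further into its $T^{n}$-ergodic pieces and then reassembling returns the $T^{n}$-ergodic decomposition of $\mu$. This is also where the hypotheses $G=\mathbb{Z}$ and the uniform coefficients $c^{n}_{S}=2^{-n}$ are genuinely used, since Proposition \ref{prop3.2}(6) — the bridge between $\mathrm{Asc}^{-}$ and the $T^{n}$-level $\mathrm{Asc}^{+}$ — is available only in that setting. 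Once $(\star)$ is in hand, everything else is bookkeeping with dominated convergence, Theorem \ref{thm3.4}, and Proposition \ref{prop3.2}(6); in particular no separate zero-dimensional reduction is needed, because Theorem \ref{thm3.4} already holds in full generality. (As a sanity check on one inequality, one can alternatively note that $\nu\mapsto\mathrm{Asc}_{\nu}^{-}(T,\mathcal{U}|Y)$ is concave, being an infimum of the concave functions $\nu\mapsto H_{\nu}(\mathcal{U}_{S}|Y)$ of Lemma \ref{lem3.3}(1), so that Jensen's inequality for the ergodic decomposition yields the direction $\mathrm{Asc}_{\mu}^{-}(T,\mathcal{U}|Y)\ge\int_{\Omega}\mathrm{Asc}_{\mu_{\omega}}^{-}(T,\mathcal{U}|Y)\,dm(\omega)$ with no reference to $T^{n}$.)
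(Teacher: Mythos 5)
Your proposal is correct and follows essentially the same route as the paper: express $\mathrm{Asc}_\mu^-(T,\mathcal{U}|Y)$ via Proposition \ref{prop3.2}(6) as a limit of averages of $\mathrm{Asc}^+_{\cdot}(T^n,\mathcal{U}_S|Y)$, apply the ergodic-decomposition formula of Theorem \ref{thm3.4} termwise, and exchange limit and integral by dominated convergence. The only difference is that you spell out the compatibility between the $T$-ergodic and $T^n$-ergodic decompositions needed to justify the termwise step $(\star)$, a point the paper's four-line computation passes over silently, so your extra argument is a refinement rather than a departure.
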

\begin{proof}
	By Proposition \ref{prop3.2} and Theorem \ref{thm3.4}, we have
	$$
	\begin{aligned}
		\mathrm{Asc} _\mu ^-(T,\mathcal{U}|Y )&=\lim_{n\to\infty}\frac{1}{n}\sum_{0\in S\subseteq n^*}\frac{1}{2^{n-1}}\mathrm{Asc} _{\mu}^+(T^n,\mathcal{U}_S|Y )=\lim_{n\to\infty}\sum_{0\in S\subseteq n^*}\frac{1}{2^{n-1}}\int_{\Omega}\mathrm{Asc} _{\mu_\omega}^+(T^n,\mathcal{U}_S|Y ) dm(\omega)\\
		&=\int_{\Omega}\lim_{n\to\infty}\frac{1}{n}\sum_{0\in S\subseteq n^*}\frac{1}{2^{n-1}}\mathrm{Asc} _{\mu_\omega}^+(T^n,\mathcal{U}_S|Y ) dm(\omega)=\int_{\Omega}\mathrm{Asc} _{\mu_\omega}^-(T,\mathcal{U}|Y ) dm(\omega).
	\end{aligned}
	$$\end{proof}

\section{Equivalence of two kinds of measure-theoretic average sample complexity }\label{sec4}
\subsection{The case of $\mathbb{Z}$-systems}

Let $\pi:(X,T)\to(Y,R)$ be a factor map between two $\mathbb{Z}$-systems. In this subsection, we shall prove that $\mathrm{Asc} _\mu ^+(T,\mathcal{U}|Y)= \mathrm{Asc} _\mu ^-(T,\mathcal{U}|Y)$ for each $\mu\in\mathcal{M}(X,T)$ and $\mathcal{U}\in\mathcal{C}_X$ when $\{c^n_S=2^{-n}:S\subseteq n^*\}$. 

\begin{lemma}\label{lem4.1}\cite[Rohlin's lemma]{glasner2003ergodic}
	Let $(X,T)$ be a $\mathbb{Z}$-systems and $\mu\in\mathcal{M}^e(X,T)$. Suppose $\mu$ is a non-atomic. Then for each $N\in\mathbb{N}$ and $\varepsilon>0$, there exists a Borel subset $D$ of $X$ such that $D,TD,\dots,T^{N-1}D$ are pairwise disjoint and $\mu \left ( \bigcup\limits_{i=0}^{N-1}T^iD  \right ) >1-\varepsilon $.
\end{lemma}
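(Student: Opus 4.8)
The plan is to prove the lemma by the classical Kakutani--Rokhlin skyscraper construction. If $N=1$ we may take $D=X$, so assume $N\ge 2$. First I would record that an ergodic non-atomic system is aperiodic: for each fixed period the set of points of that period is $T$-invariant, hence by ergodicity null or co-null, and in the co-null case $\mu$ would (again by ergodicity) be carried by a single finite orbit and so be atomic, a contradiction. Thus for $\mu$-a.e.\ $x$ the points $x,Tx,T^2x,\dots$ are pairwise distinct.

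Next, using non-atomicity I would pick a Borel set $A$ with $0<\mu(A)<\varepsilon/N$. By the Poincar\'e recurrence theorem the first-return time $r_A(x):=\min\{n\ge 1:T^nx\in A\}$ is finite for $\mu$-a.e.\ $x\in A$; put $A_k:=\{x\in A:r_A(x)=k\}$ for $k\ge1$. The structural heart of the argument is that the family $\{T^jA_k:k\ge1,\ 0\le j<k\}$ is pairwise disjoint with union equal to $X$ modulo a $\mu$-null set. Disjointness (within and across columns) follows directly from the definition of $r_A$. Co-nullity of the union is where ergodicity enters: the set $C:=\bigcup_{n\ge0}T^nA$ satisfies $C\subseteq T^{-1}C$ and $\mu(C)=\mu(T^{-1}C)$, so $\mu(C\,\triangle\,T^{-1}C)=0$ and hence $\mu(C)=1$; and for a.e.\ $x\in C$, taking the least $n\ge0$ with $T^{-n}x\in A$ locates $x$ in exactly one column $T^jA_k$.

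Finally comes the rounding step. For each $k\ge1$ write $k=q_kN+s_k$ with $0\le s_k<N$, set $D_k:=\bigcup_{i=0}^{q_k-1}T^{iN}A_k$ (empty when $k<N$), and $D:=\bigcup_{k\ge1}D_k$. The levels occupied by $D_k,TD_k,\dots,T^{N-1}D_k$ are precisely the bottom $q_kN$ levels $0,1,\dots,q_kN-1$ of the $k$-th column, so $D,TD,\dots,T^{N-1}D$ are pairwise disjoint and $\bigcup_{j=0}^{N-1}T^jD$ omits only the top $s_k<N$ levels of each column. Therefore $\mu\big(X\setminus\bigcup_{j=0}^{N-1}T^jD\big)=\sum_{k\ge1}s_k\,\mu(A_k)\le(N-1)\mu(A)<\varepsilon$, which is the claim.

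I expect the main obstacle to be the careful justification of the skyscraper decomposition -- that the columns exhaust $X$ modulo a null set and that each point sits in a unique column -- together with the (routine but essential) reduction showing ergodicity plus non-atomicity forces aperiodicity, so that $r_A$ is a.e.\ finite; once these are settled, the rounding construction and the final measure estimate are pure bookkeeping.
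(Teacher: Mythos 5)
Your proof is correct, and in fact the paper itself offers no argument for this statement: Lemma \ref{lem4.1} is quoted as Rohlin's lemma with a citation to Glasner's book, so your Kakutani--Rokhlin skyscraper construction supplies a self-contained proof where the paper relies on the literature. The construction goes through exactly as you describe: disjointness of the levels $T^jA_k$ uses only the definition of the first return time and the invertibility of $T$ (available here since $T$ is a homeomorphism of the compact metric space $X$), co-nullity of the tower uses ergodicity via $C\subseteq T^{-1}C$ and $\mu(C)=\mu(T^{-1}C)$, and the rounding estimate $\mu\bigl(X\setminus\bigcup_{j=0}^{N-1}T^jD\bigr)=\sum_{k\ge1}s_k\,\mu(A_k)\le (N-1)\mu(A)<\varepsilon$ is correct. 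Two minor remarks. First, your preliminary aperiodicity step is actually dispensable in this version of the argument: if some column has height $k<N$ it contributes $q_k=0$ and lies entirely in the error set, but its measure $k\,\mu(A_k)=s_k\,\mu(A_k)$ is still at most $(N-1)\mu(A_k)$, so the final bound is unaffected; non-atomicity is needed only to produce a Borel set $A$ with $0<\mu(A)<\varepsilon/N$ (e.g.\ small balls around a point of the support, whose measures decrease to $\mu(\{x\})=0$). Second, if you do keep the aperiodicity reduction, the assertion that an ergodic measure concentrated on periodic points is carried by a single finite orbit deserves a line of justification (for instance, via the zero--one law applied to $\bigcup_{i=0}^{n-1}T^{-i}A$ for a set $A$ with $0<\mu(A)<1/n$); as it stands that sentence is the only glossed point, and since the step is not needed it does not affect the validity of the proof.
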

By Lemma \ref{lem4.1} and Theorem \ref{thm3.3}, we shall present the following theorem which is the variational principle of the conditional local average sample complexity for $\mathbb{Z}$-systems. 

\begin{theorem}\label{thm4.1}
	Let $\pi:(X,T)\to(Y,R)$ be a factor map between two $\mathbb{Z}$-systems, $\mathcal{U}\in\mathcal{C}_X$. Then $\mathrm{Asc}_\mathrm{top}(T,\mathcal{U}|Y)\ge \mathrm{Asc}_\mu ^+(T,\mathcal{U}|Y)$. When $\mathcal{U}\in\mathcal{C}_X^o$, we have
	$$
	\mathrm{Asc}_\mathrm{top}(T,\mathcal{U}|Y)=\max_{\mu\in\mathcal{M}(X,T)}\mathrm{Asc}_\mu ^+(T,\mathcal{U}|Y).
	$$
\end{theorem}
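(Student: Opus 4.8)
The plan is to establish the two inequalities separately. The inequality $\mathrm{Asc}_\mathrm{top}(T,\mathcal{U}|Y)\ge \mathrm{Asc}_\mu^+(T,\mathcal{U}|Y)$ for every $\mu\in\mathcal{M}(X,T)$ is the "easy" direction: for any $\alpha\in\mathcal{P}_X$ with $\alpha\succeq\mathcal{U}$ and any finite $S\subseteq n^*$ one has $H_\mu(\alpha_S|Y)\le\log N(\alpha_S|Y)\le\log N(\mathcal{U}_S|Y)$ by Lemma \ref{lem3.1}(1) together with monotonicity of $N(\cdot|Y)$ under refinement. Multiplying by $c^n_S=2^{-n}$, summing over $S\subseteq n^*$, dividing by $n$ and passing to the limit gives $\mathrm{Asc}_\mu(T,\alpha|Y)\le\mathrm{Asc}_\mathrm{top}(T,\mathcal{U}|Y)$; taking the infimum over admissible $\alpha$ yields $\mathrm{Asc}_\mu^+(T,\mathcal{U}|Y)\le\mathrm{Asc}_\mathrm{top}(T,\mathcal{U}|Y)$. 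This holds for arbitrary covers $\mathcal{U}\in\mathcal{C}_X$, which is exactly the first assertion.

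For the reverse inequality when $\mathcal{U}\in\mathcal{C}_X^o$, I would invoke Theorem \ref{thm3.3}: there exists $\mu_0\in\mathcal{M}(X,T)$ with $\mathrm{Asc}_\mathrm{top}(T,\mathcal{U}|Y)=\mathrm{Asc}_{\mu_0}^-(T,\mathcal{U}|Y)$. Thus it suffices to show $\mathrm{Asc}_{\mu_0}^-(T,\mathcal{U}|Y)\le\mathrm{Asc}_{\mu_0}^+(T,\mathcal{U}|Y)$, since the opposite inequality is Proposition \ref{prop3.2}(1), and this would simultaneously give the identity $\mathrm{Asc}^-_{\mu_0}=\mathrm{Asc}^+_{\mu_0}$ for this particular measure and promote the supremum in $\max_{\mu}\mathrm{Asc}_\mu^+(T,\mathcal{U}|Y)$ to a genuine maximum attained at $\mu_0$. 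By the ergodic decomposition (Theorem \ref{thm3.4} for the $+$ version, and its $-$ analogue), and because both $\mathrm{Asc}_*^\pm(T,\mathcal{U}|Y)$ are affine/integrable over $\Omega$, it is enough to prove $\mathrm{Asc}_\mu^-(T,\mathcal{U}|Y)\le\mathrm{Asc}_\mu^+(T,\mathcal{U}|Y)$ for ergodic $\mu$. The atomic case is trivial (finitely many points, everything vanishes in the limit), so assume $\mu$ non-atomic.

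The core of the argument is the non-atomic ergodic case, handled via Rohlin's lemma (Lemma \ref{lem4.1}). Fix $\varepsilon>0$ and a large $N$; pick a Rohlin tower base $D$ with $D,TD,\dots,T^{N-1}D$ disjoint and $\mu(\bigcup_{i<N}T^iD)>1-\varepsilon$. Starting from the cover $\mathcal{U}$, I would build a partition $\alpha\succeq\mathcal{U}$ whose atoms are, roughly, constant along the tower columns: refine a measurable selection $\alpha_0\succeq\mathcal{U}$ on the base by pulling back through the tower, so that for a typical long orbit segment the $\mathcal{U}_S$-name and the $\alpha_S$-name nearly coincide, with the mismatch controlled by $N^{-1}$ and by $\varepsilon$ plus the $\log d$ cost on the (small-measure) error set. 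Using Proposition \ref{prop3.2}(4) to write $\mathrm{Asc}_\mu^-(T,\mathcal{U}|Y)=\lim_n \frac{1}{n}\sum_{0\in S\subseteq n^*}2^{-(n-1)}H_\mu(\mathcal{U}_S|Y)$ and estimating each $H_\mu(\mathcal{U}_S|Y)=H_\mu(\alpha_S|Y)$ after the refinement, one obtains $\mathrm{Asc}_\mu^-(T,\mathcal{U}|Y)\le\mathrm{Asc}_\mu(T,\alpha|Y)+o_\varepsilon(1)\le\mathrm{Asc}_\mu^+(T,\mathcal{U}|Y)+o_\varepsilon(1)$, and letting $\varepsilon\to0$ finishes the proof. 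The main obstacle I anticipate is precisely this step: carefully engineering the partition $\alpha$ out of a Rohlin tower so that the combinatorial average $\frac{1}{n}\sum_{S}2^{-(n-1)}H_\mu(\alpha_S|Y)$ over all subsets $S$ (not just over initial segments $\{0,\dots,n-1\}$, which is the classical entropy situation) stays within $\varepsilon$ of the corresponding $\mathcal{U}_S$-average uniformly in $n$ — the conditioning on $Y$ and the sum over all $2^n$ subsets both require the error estimates to be uniform, so one must control $H_\mu(\alpha_S\,|\,\mathcal{U}_S\vee\pi^{-1}\mathcal{B}(Y))$ by something like $|S\setminus S_{\mathrm{good}}|\log d$ with $S_{\mathrm{good}}$ the indices whose orbit lies in the tower interior, and then average that bound against the coefficients $2^{-(n-1)}$.
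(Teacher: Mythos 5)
Your first step contains the decisive gap. You treat $\mathrm{Asc}_\mathrm{top}(T,\mathcal{U}|Y)\ge \mathrm{Asc}_\mu^+(T,\mathcal{U}|Y)$ as the easy direction, but the chain $H_\mu(\alpha_S|Y)\le\log N(\alpha_S|Y)\le\log N(\mathcal{U}_S|Y)$ breaks at the second inequality: with the paper's convention, $\alpha\succeq\mathcal{U}$ means $\alpha$ refines $\mathcal{U}$, and refining \emph{increases} covering numbers, i.e. $N(\mathcal{U}_S|Y)\le N(\alpha_S|Y)$, not the reverse. Consequently your intermediate claim that $\mathrm{Asc}_\mu(T,\alpha|Y)\le\mathrm{Asc}_\mathrm{top}(T,\mathcal{U}|Y)$ for \emph{every} partition $\alpha\succeq\mathcal{U}$ is false; what comes cheaply from Lemma \ref{lem3.1}(1) is only $H_\mu(\mathcal{U}_S|Y)\le\log N(\mathcal{U}_S|Y)$, hence $\mathrm{Asc}_\mu^-\le\mathrm{Asc}_\mathrm{top}$. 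The inequality for $\mathrm{Asc}^+$ is exactly the hard content of the theorem, and the paper's proof is devoted entirely to it: by Theorem \ref{thm3.4} one reduces to non-atomic ergodic $\mu$; then one disintegrates $\mu$ over $\nu=\pi\mu$, chooses $\beta\succeq\mathcal{U}$ with $|\beta|_{\mu_y}\le N(\mathcal{U}|Y)$ for $\nu$-a.e. $y$, builds from $\beta$ restricted to the base of a Rohlin tower of height $N$ a partition $\alpha\succeq\mathcal{U}$, and bounds $H_\mu(\alpha_K|Y)$ by the fiberwise count of column names (of order $N(\mathcal{U}_S|Y)^{|K|/|S|+1}d^{\sqrt{\delta}|K|+|S|}$) together with a Stirling estimate on the number of return-time patterns; averaging against the coefficients yields $\mathrm{Asc}_\mu^+(T,\mathcal{U}|Y)\le\sum_{S\subseteq N^*}c^N_S\log N(\mathcal{U}_S|Y)+3\varepsilon$. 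None of this is present in your proposal at the point where it is needed.

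Your Rohlin-tower sketch is aimed at the wrong inequality. The chain $\mathrm{Asc}_\mu^-(T,\mathcal{U}|Y)\le\mathrm{Asc}_\mu(T,\alpha|Y)+o_\varepsilon(1)\le\mathrm{Asc}_\mu^+(T,\mathcal{U}|Y)+o_\varepsilon(1)$ only re-derives $\mathrm{Asc}^-\le\mathrm{Asc}^+$, which is Proposition \ref{prop3.2}(1) (you quote its direction backwards) and is trivial since $H_\mu(\mathcal{U}_S|Y)\le H_\mu(\alpha_S|Y)$ for every $\alpha\succeq\mathcal{U}$ and one then takes the infimum over $\alpha$; it gives no control of $\mathrm{Asc}_\mu^+$ by $\mathrm{Asc}_\mathrm{top}$. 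The genuinely hard equivalence $\mathrm{Asc}^+\le\mathrm{Asc}^-$ is Theorem \ref{thm4.3}, which in the paper is proved \emph{after} and \emph{using} Theorem \ref{thm4.1}, so it cannot be invoked here without circularity. Finally, the attainment half is simpler than you make it: Theorem \ref{thm3.3} produces $\mu_0$ with $\mathrm{Asc}_\mathrm{top}(T,\mathcal{U}|Y)=\mathrm{Asc}_{\mu_0}^-(T,\mathcal{U}|Y)$, and Proposition \ref{prop3.2}(1) immediately gives $\mathrm{Asc}_{\mu_0}^-\le\mathrm{Asc}_{\mu_0}^+$, so once the first inequality is proved for all $\mu$ the maximum is attained at $\mu_0$; no additional ergodic decomposition or tower construction is needed for that part.
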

\begin{proof}
	We are inspired by the approach presented in \cite{glasner2003ergodic,huang2004entropy,romagnoli2018local}. By Theorem \ref{thm3.4}, we need to proved that $\mathrm{Asc}_\mathrm{top}(T,\mathcal{U}|Y)\ge \mathrm{Asc}_\mu ^+(T,\mathcal{U}|Y)$ for each non-atomic measure $\mu\in\mathcal{M}^e(X,T)$.
	
	Let $\mu\in\mathcal{M}^e(X,T)$ be a non-atomic measure, $\nu=\pi\mu$ and $\mathcal{U}=\{U_1,\dots,U_d\}\in\mathcal{C}_X$. Then $\nu\in\mathcal{M}^e(Y,R)$. Let $\mu=\int_Y\mu_yd\nu(y)$ be the disintegration of $\mu$ with respect to $\nu$. It is not hard to show that we can take $\beta \succeq\mathcal{U}$ such that $|\beta|_{\mu_y}\le N(\mathcal{U}|Y)$ for $\nu$-a.e. where $|\beta|_{\mu_y}=|\{B\in\beta:\mu_y(B)>0\}|$ for each $y\in Y$. Then for each $S\subseteq N^*$, we have $|\beta_S|_{\mu_y}\le N(\mathcal{U}_S|Y)$ for $\nu$-a.e. $y\in Y$. By the definition of $\mathrm{Asc}_\mathrm{top}(T,\mathcal{U}|Y)$, for $\varepsilon>0$, we choose $N\in\mathbb{N}$ large enough such that 
	$$
	\sum_{S\subseteq N^*}c^N_S\log N(\mathcal{U}_S|Y)\le N\left(\mathrm{Asc}_\mathrm{top}(T,\mathcal{U}|Y)+\varepsilon\right).
	$$
	Let $0<\delta<1$ small enough such that $2\sqrt{\delta}\log k<\varepsilon $. By Lemma \ref{lem4.1}, there exists $D\subseteq \mathcal{B}(X)$ with $D, TD,\dots,T^{N-1}D$ pairwise disjoint such that $\mu\left(\bigcup\limits_{i=0}^{N-1}T^{-i}D\right)>1-\varepsilon$. Now we define the partition of $D$, $\beta_D=\{B\cap D:B\in\beta\}$. We use the partition $\beta_D$ to define a partition of $X$ by $\alpha=\{A_1,\dots,A_d\}$, which assigning the set $A_i$ all the sets of the form with
	$
	P=P_{i}\subseteq{U_i}\cap D,
	$
	where $P\in\beta_D$. On the remainder of the space $\alpha$ can be taken to be any partition refining $\mathcal{U}$. we note that $\alpha_S\cap D={(\beta_D)}_S$. Set ${(\beta_D)}_S=\beta_{S,D}$, then
	$$
	|\alpha_S\cap D|_{\mu_y}=|\beta_{S,D}|_{\mu_y}\le|\beta_{S}|_{\mu_y}\le N(\mathcal{U}_S|Y)\text{ for }\nu\text{-a.e.}\text{ } y\in Y.
	$$
	Fix $\mu\in\mathcal{M}^e(X,T)$, we shall show that $\mathrm{Asc}_\mu ^+(T,\mathcal{U}|Y)\le\mathrm{Asc}_\mathrm{top}(T,\mathcal{U}|Y)+3\varepsilon$. Let $E=\bigcup\limits_{i=0}^{N-1}T^{i}D$. Then $\mu(E)>1-\delta$. Let $n\gg N$ such that $\lim_{n\to \infty}\frac{N}{n}=0$ and $K\subseteq n^*$, then we set
	$
	G_{n,K}=\left\{x\in X:\frac{1}{|K|}\sum_{i\in K}1_E(T^ix)>1-\sqrt{\delta} \right\}.
	$ So
	$$
	\begin{aligned}
		\mu(G_{n,K})+(1-\sqrt{\delta})(1-\mu(G_{n,K}))&\ge\int\limits_{X}\frac{1}{|K|}\sum_{i\in K}1_E(T^ix)d\mu(x)=\mu(E)>1-\delta.
	\end{aligned}
	$$
	So we have $\mu(G_{n,K})>1-\sqrt\delta$. For each $x\in G_{n,K}$, we define $S_{n,K}(x)=\{i\in K:T^ix\in D\}$. $|\cup_{j\in S}(S_{n,K}(x)+j)|=\sum\limits_{i\in S}1_E(T^ix)>|K|(1-\sqrt\delta)$. By definition, fix $S\subseteq N^*$, $i\in\{i\in K:T^ix\notin \bigcup_{j\in S}T^{-j}D\}$ if and only if $i\notin S_{n,K}(x)+j$ for any $j\in S$. Thus,
	$
	\left |  K\setminus \bigcup_{j\in S}(S_{n,K}(x)+j)\right | \le|K|\sqrt\delta+|S|.
	$
	
	Let $\mathcal{F}_{n,K}=\{S_{n,K}(x):x\in G_{n,K}\}$ and $F=\{s_1,\dots,s_l\}\in\mathcal{F}_{n,K}$. It is not hard to show that $F\cap(F+i)=\emptyset$, $i\in\{0,\dots N-1\}$. Since $D, TD,\dots,T^{N-1}D$ are pairwise disjoint, we have $|F|\le\frac{|K|}{|S|}+1$. Set $a_{n,K}=[\frac{|K|}{|S|}]+1$, then
	$
	|\mathcal{F}_{n,K}|\le \sum_{j=1}^{a_{n,K}}\binom{|K|}{j} \le K\binom{|K|}{a_{n,K}}.
	$
	By Stirling's formula, we have 
	$\lim_{n\to\infty}\frac{1}{n}\log\left(|K|\binom{|K|}{a_{n,K}}\right)\le \varepsilon.$
	So we have 
	$$
	\limsup_{n\to\infty}\frac{1}{n}\log(|\mathcal{F}_{n,K}|+1)\le\lim_{n\to\infty}\frac{1}{n}\log\left(|K|\binom{|K|}{a_{n,K}}\right)\le\varepsilon.
	$$
	We define $B_F=\{x\in G_{n,K}:S_{n,K}(x)=F,F\in\mathcal{F}_{n,K}\}$ and $\gamma=\{B_F:F\in\mathcal{F}_{n,K}\}$. It is easy to show that $\gamma$ is the partition of $ G_{n,K}$. For $F\in\mathcal{F}_{n,K}$, let $F=\{f_1,\cdots,f_l\}$ and $H_{FS}=K\setminus\bigcup\limits_{j\in S}(F+j)$ with $|H_{FS}|\le|K|\sqrt\delta+|S|$. Since $B_F\subseteq  G_{n,K}\cap \bigcap\limits_{j\in S}
	T^{-j}D$, then
	$$
	\begin{aligned}
		|\alpha_K\cap B_F|_{\mu_y}&\le\prod _{j=1}^l\left | \alpha_S\cap D \right |_{T^{j}\mu_y} \prod_{r\in H_{FS}}|\alpha|_{\mu_y}\le N(\mathcal{U}_S|Y)^{l}d^{| H_{FS}|}\le d^{|K|\sqrt\delta+|S|}(N(\mathcal{U}_S|Y))^{|K|/|S|+1}.
	\end{aligned}
	$$
	Set $b_{n,K,S}=k^{|K|\sqrt\delta+|S|}(N(\mathcal{U}_S|Y))^{|K|/|S|+1}$, then we obtain that 
	\begin{equation}\label{111}
		\begin{aligned}
			&H_{\mu}(\alpha_K\cap B_F|Y)=\int\limits_YH_{\mu_y}(\alpha_K\cap B_F|Y)d\nu(y)\le\int\limits_Y\mu_y(B_F)(\log|\alpha_K\cap B_F|_{\mu_y}-\log\mu_y(B_F))d\nu(y)\\
			\le&\int\limits_Y\mu_y(B_F)(\log b_{n,K,S}-\log\mu_y(B_F))d\nu(y)=\mu(B_F)(\log b_{n,K,S})-\int\limits_Y\mu_y(B_F)\log(\mu_y(B_F))d\nu(y).
		\end{aligned}
	\end{equation}
	Moreover, we have 
	\begin{equation}\label{112}
		\begin{aligned}
			H_{\mu}(\alpha_K\cap( X\setminus G_{n,K})|Y)&\le\mu(X\setminus G_{n,K})(\log b_{n,K,S})-\int\limits_Y\mu_y(X\setminus G_{n,K})\log(\mu_y(X\setminus G_{n,K}))d\nu(y).
		\end{aligned}
	\end{equation}
	Let $\Delta=\{B_F\}_{F\in\mathcal{F}_{n,K}}\cup\{X\setminus G_{n,K}\}$. Combining (\ref{111}) and (\ref{112}), we obtain that %第二行改一些6.10
	$$
	\begin{aligned}
		H_{\mu}(\alpha_K|Y)
		&\le \log b_{n,K,S}+\sqrt{\delta}|K|\log k+\int\limits_y|\log\Delta|d\nu(y)=\log b_{n,K,S}+\sqrt{\delta}|K|\log k+\log (|\mathcal{F}_{n,K}|+1).
	\end{aligned}
	$$
	For each $S\subseteq N^*$ and $S\neq\emptyset $, %假设n远大于N使得 N/n极限趋于0
	this implies that
	$$
	\begin{aligned}
		\mathrm{Asc}_\mu ^+(T,\mathcal{U}|Y)&
		\le\limsup_{n\to\infty}\frac{1}{n}\sum_{K\subseteq n^*}c^n_K[\log(k^{|K|\sqrt\delta+|S|}N(\mathcal{U}_S|Y)^{|K|/ |S|+1})+\sqrt{\delta}|K|\log k+\log (|\mathcal{F}_{n,K}|+1]\\
		&\le\sum_{S\subseteq N^*}c^N_S\log N(\mathcal{U}_S|Y)+3\varepsilon.
	\end{aligned}
	$$
	Since $\varepsilon$ is arbitrary, letting $N\to\infty$,  $\mathrm{Asc}_\mu ^+(T,\mathcal{U}|Y)\le\mathrm{Asc}_\mathrm{top}(T,\mathcal{U}|Y)$.
\end{proof}

The following theorem shows that two kinds of measure-theoretic average sample complexity of $\mathbb{Z}$-systems are equal when $\{c^n_S=\frac{1}{2^n}: S\subseteq n^*\}$.
\begin{theorem}\label{thm4.2}
	Let $\pi:(X,T)\to(Y,R)$ be a factor map between two $\mathbb{Z}$-systems and $\{c^n_S=\frac{1}{2^n}:S\subseteq n^*\}$. If $(X,T)$ is uniquely ergodic, then the following holds:
	\item[$\mathrm{(1)}$] $\mathrm{Asc}_\mathrm{top}(T,\mathcal{U}|Y)=\mathrm{Asc}_\mu ^+(T,\mathcal{U}|Y)=\mathrm{Asc}_\mu ^-(T,\mathcal{U}|Y)$ for each $\mathcal{U}\in\mathcal{C}^o_X$.
	\item[$\mathrm{(2)}$] $\mathrm{Asc}_\mu ^+(T,\mathcal{U}|Y)=\mathrm{Asc}_\mu ^-(T,\mathcal{U}|Y)$ for each $\mathcal{U}\in\mathcal{C}_X$.
\end{theorem}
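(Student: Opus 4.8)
The plan is to read off (1) directly from the two variational principles already established, and then to bootstrap to (2) by approximating the Borel cover $\mathcal{U}$ from outside by open covers.

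For (1), fix $\mathcal{U}\in\mathcal{C}_X^o$; unique ergodicity means $\mathcal{M}(X,T)=\{\mu\}$. Theorem \ref{thm3.3} (with the uniform system of coefficients, which is exactly the hypothesis here) produces an invariant measure $\mu'$ with $\mathrm{Asc}_\mathrm{top}(T,\mathcal{U}|Y)=\mathrm{Asc}_{\mu'}^-(T,\mathcal{U}|Y)$, and necessarily $\mu'=\mu$, so $\mathrm{Asc}_\mathrm{top}(T,\mathcal{U}|Y)=\mathrm{Asc}_\mu^-(T,\mathcal{U}|Y)$. Likewise, the maximum in Theorem \ref{thm4.1} is attained over the one-point set $\mathcal{M}(X,T)$, giving $\mathrm{Asc}_\mathrm{top}(T,\mathcal{U}|Y)=\mathrm{Asc}_\mu^+(T,\mathcal{U}|Y)$. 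Concatenating the two identities (with Proposition \ref{prop3.2}(1), $\mathrm{Asc}_\mu^-\le\mathrm{Asc}_\mu^+$, as a sanity check) yields $\mathrm{Asc}_\mathrm{top}(T,\mathcal{U}|Y)=\mathrm{Asc}_\mu^+(T,\mathcal{U}|Y)=\mathrm{Asc}_\mu^-(T,\mathcal{U}|Y)$.

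For (2), let $\mathcal{U}=\{U_1,\dots,U_d\}\in\mathcal{C}_X$ and $\varepsilon>0$, and let $\delta>0$ be the constant furnished by Proposition \ref{equation}. By regularity of the Borel probability measure $\mu$ on the compact metric space $X$, pick open sets $V_i\supseteq U_i$ with $\mu(V_i\setminus U_i)<\delta/d$, and set $\mathcal{V}=\{V_1,\dots,V_d\}$. Then $\mathcal{V}\in\mathcal{C}_X^o$ (its union is still $X$), $\mathcal{U}\succeq\mathcal{V}$, and $\sum_{i=1}^d\mu(U_i\bigtriangleup V_i)<\delta$. Since $\mathcal{U}\succeq\mathcal{V}$ forces $\mathcal{U}_S\succeq\mathcal{V}_S$ for every $S$ and $\{\alpha\in\mathcal{P}_X:\alpha\succeq\mathcal{U}\}\subseteq\{\alpha\in\mathcal{P}_X:\alpha\succeq\mathcal{V}\}$, Lemma \ref{lem3.1}(2) and the definitions give $\mathrm{Asc}_\mu^-(T,\mathcal{V}|Y)\le\mathrm{Asc}_\mu^-(T,\mathcal{U}|Y)$ and $\mathrm{Asc}_\mu^+(T,\mathcal{V}|Y)\le\mathrm{Asc}_\mu^+(T,\mathcal{U}|Y)$. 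Applying part (1) to the open cover $\mathcal{V}$, Proposition \ref{equation} to the pair $(\mathcal{U},\mathcal{V})$, and Proposition \ref{prop3.2}(1) to $\mathcal{U}$, we obtain
$$\mathrm{Asc}_\mu^+(T,\mathcal{U}|Y)-\varepsilon<\mathrm{Asc}_\mu^+(T,\mathcal{V}|Y)=\mathrm{Asc}_\mu^-(T,\mathcal{V}|Y)\le\mathrm{Asc}_\mu^-(T,\mathcal{U}|Y)\le\mathrm{Asc}_\mu^+(T,\mathcal{U}|Y);$$
letting $\varepsilon\to0$ gives $\mathrm{Asc}_\mu^+(T,\mathcal{U}|Y)=\mathrm{Asc}_\mu^-(T,\mathcal{U}|Y)$.

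The main point to watch is in (2): the approximating open cover must be simultaneously $L^1(\mu)$-close to $\mathcal{U}$, so that Proposition \ref{equation} can be invoked, and coarser than $\mathcal{U}$, so that the monotonicity inequalities hold; the inclusions $U_i\subseteq V_i$ deliver both at once. Apart from that, the argument is a formal chaining of results from Sections \ref{sec3} and \ref{sec4}, together with measure regularity.
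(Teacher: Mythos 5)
Your part (1) is exactly the paper's argument: under unique ergodicity the measure produced by Theorem \ref{thm3.3} and the maximizer in Theorem \ref{thm4.1} must both be $\mu$, and together with $\mathrm{Asc}_\mu^-\le\mathrm{Asc}_\mu^+$ this closes the chain; nothing to add there.

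Your part (2) is correct but takes a genuinely different, and in fact shorter, route than the paper. The paper approximates the Borel cover $\mathcal{V}$ by an $L^1(\mu)$-close open cover with no comparability between the two, and therefore cannot transfer the conclusion back through $\mathrm{Asc}_\mu^-$ directly (Proposition \ref{equation} gives continuity only for $\mathrm{Asc}_\mu^+$); it compensates by passing to the power system via Proposition \ref{prop3.2}(5)--(6), applying Proposition \ref{equation} a second time to the covers $\mathcal{V}_S$, $\mathcal{U}_S$ under $T^M$ (with the $\delta_{2,S}/R^{|S|}$ bookkeeping), and then recombining. You instead exploit outer regularity of $\mu$ to choose the open sets with $V_i\supseteq U_i$, so the approximating open cover is simultaneously $L^1$-close (allowing one application of Proposition \ref{equation} to the $+$ quantities) and coarser than $\mathcal{U}$ (so monotonicity of $H_\mu(\cdot_S|Y)$ from Lemma \ref{lem3.1}(2) handles the $-$ quantities), and the chain
$\mathrm{Asc}_\mu^+(T,\mathcal{U}|Y)-\varepsilon<\mathrm{Asc}_\mu^+(T,\mathcal{V}|Y)=\mathrm{Asc}_\mu^-(T,\mathcal{V}|Y)\le\mathrm{Asc}_\mu^-(T,\mathcal{U}|Y)\le\mathrm{Asc}_\mu^+(T,\mathcal{U}|Y)$
closes without any recourse to $T^M$. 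What the paper's route buys is independence from the refinement relation (any $L^1$-close open cover works); what yours buys is economy: one use of Proposition \ref{equation}, no power-system identities, no $\delta_{2,S}$ estimates. The extra ingredient you use, outer regularity, is automatic for Borel probability measures on a compact metric space, and the paper implicitly needs the same fact to know that an $L^1$-close open cover exists at all, so your hypothesis load is not actually larger. One cosmetic point: some $V_i$ may coincide after enlarging, but Proposition \ref{equation} is stated for indexed families of $d$ sets, so this is harmless.
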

\begin{proof} (1) Combining Theorem \ref{thm3.3} and Theorem \ref{thm4.1}, we obtain the first result.
	
	(2)Fix $\mathcal{V}\in\mathcal{C}_X$ and $\varepsilon>0$. Put $|\mathcal{V}|=R$. By Proposition \ref{equation}, there exists $\delta_1>0$ such that for every $\mathcal{U}_1\in\mathcal{C}_X$ with $|\mathcal{U}_1|=R$, when $\mu(\mathcal{U}_1\bigtriangleup\mathcal{V})<\delta_1$, we have 
	\begin{equation}\label{eq6.1}
		\begin{aligned}
			\mathrm{Asc}_\mu ^+(T,\mathcal{V}|Y)\le  \mathrm{Asc}_\mu ^+(T,\mathcal{U}_1|Y)+\varepsilon.
		\end{aligned}
	\end{equation}
	
	By Proposition \ref{prop3.2}, there exists $M\in\mathbb{N}$ such that 
	\begin{equation}
		\begin{aligned}
			\frac{1}{M}\sum_{0\in S\subseteq M^*} \frac{1}{2^{M-1}}\mathrm{Asc}_\mu ^+(T^M,\mathcal{V}_S|Y)\le  \mathrm{Asc}_\mu ^-(T,\mathcal{V}|Y)+\frac{\varepsilon }{2} 
		\end{aligned}.
	\end{equation}
	
	By Proposition \ref{equation}, there exists $\delta_{2,S}>0$ such that for every $\mathcal{U}_{2,S}\in\mathcal{C}_X$ with $|\mathcal{U}_{2,S}|=|\mathcal{V}_S|$, when $\mu(\mathcal{U}_{2,S}\bigtriangleup\mathcal{V}_S)<\delta_{2,S}$, we have 
	\begin{equation}
		\begin{aligned}
			\frac{1}{M}\sum_{0\in S\subseteq M^*} \frac{1}{2^{M-1}} \mathrm{Asc}_\mu ^+(T^M,\mathcal{U}_{2,S}|Y)\le \frac{1}{M}\sum_{0\in S\subseteq M^*}  \frac{1}{2^{M-1}}\mathrm{Asc}_\mu ^+(T^M,\mathcal{V}_S|Y)+\frac{\varepsilon }{2} .
		\end{aligned}
	\end{equation}
	
	Now we consider $\mathcal{U}\in\mathcal{C}_X^o$ with $|\mathcal{U}|=R$ such that $\mu(\mathcal{U}\bigtriangleup\mathcal{V})<\min\{\delta_1,\frac{\delta_{2,S}}{R^{|S|}},S\in M^*\}$, by Equation (\ref{eq6.1}), one has 
	\begin{equation}\label{eq6.4}
		\begin{aligned}
			\mathrm{Asc}_\mu ^+(T,\mathcal{V}|Y)\le  \mathrm{Asc}_\mu ^+(T,\mathcal{U}Y)+\varepsilon.
		\end{aligned}
	\end{equation}
	Since $\mu(\mathcal{U}_S\bigtriangleup\mathcal{V}_S)\le |\mathcal{V}_S|\mu(\mathcal{U}\bigtriangleup\mathcal{V})<\delta_{2,S}$, so
	\begin{equation}\label{eq6.5}
		\begin{aligned}
			\frac{1}{M}\sum_{0\in S\subseteq M^*}  \frac{1}{2^{M-1}}\mathrm{Asc}_\mu ^+(T^M,\mathcal{U}_{S}|Y)&\le \frac{1}{M}\sum_{0\in S\subseteq M^*}  \frac{1}{2^{M-1}}\mathrm{Asc}_\mu ^+(T^M,\mathcal{V}_S|Y)+\frac{\varepsilon }{2}\le\mathrm{Asc}_\mu ^-(T,\mathcal{V}|Y)+ \varepsilon.
		\end{aligned}
	\end{equation}
	Thus, combining above equations, Theorem \ref{thm4.2} (1), we have
	$$
	\begin{aligned}
		\mathrm{Asc}_\mu ^+(T,\mathcal{V}|Y)&\le\mathrm{Asc}_\mu ^+(T,\mathcal{U}|Y)+\varepsilon=\mathrm{Asc}_\mu ^-(T,\mathcal{U}|Y)+\varepsilon\le\frac{1}{M}\sum_{0\in S\subseteq M^*}  \frac{1}{2^{M-1}}\mathrm{Asc}_\mu ^-(T^M,\mathcal{U}_S|Y)+\varepsilon\\
		&\le \frac{1}{M}\sum_{0\in S\subseteq M^*} \frac{1}{2^{M-1}} \mathrm{Asc}_\mu ^+(T^M,\mathcal{U}_S|Y)+\varepsilon\le\mathrm{Asc}_\mu ^-(T,\mathcal{V}|Y)+2 \varepsilon.
	\end{aligned}
	$$
	
	Since $\varepsilon$ is arbitrary, we have $\mathrm{Asc}_\mu ^+(T,\mathcal{V}|Y)=\mathrm{Asc}_\mu ^-(T,\mathcal{V}|Y)$. We complete the proof.
\end{proof}

The following theorem shall expand unique ergodic $\mathbb{Z}$-systems to the general case.
\begin{theorem}\label{thm4.3}
	Let $\pi:(X,T)\to(Y,R)$ be a factor map between $\mathbb{Z}$-systems, $\mu\in\mathcal{M}(X,T)$, $\{c^n_S=\frac{1}{2^n}:S\subseteq n^*\}$ and $\mathcal{U}\in\mathcal{C}_X$. Then
	$$\mathrm{Asc}_\mu ^+(T,\mathcal{U}|Y)=\mathrm{Asc}_\mu ^-(T,\mathcal{U}|Y).$$
\end{theorem}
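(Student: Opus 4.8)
The plan is to deduce the general case from the uniquely ergodic case already proved in Theorem~\ref{thm4.2}. By Proposition~\ref{prop3.2}(1) one always has $\mathrm{Asc}_\mu^-(T,\mathcal{U}|Y)\le\mathrm{Asc}_\mu^+(T,\mathcal{U}|Y)$, so only the reverse inequality $\mathrm{Asc}_\mu^+(T,\mathcal{U}|Y)\le\mathrm{Asc}_\mu^-(T,\mathcal{U}|Y)$ needs to be established.

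First I would reduce to ergodic measures. Writing $\mu=\int_\Omega\mu_\omega\,dm(\omega)$ for the ergodic decomposition of $\mu$, Theorem~\ref{thm3.4} gives $\mathrm{Asc}_\mu^+(T,\mathcal{U}|Y)=\int_\Omega\mathrm{Asc}_{\mu_\omega}^+(T,\mathcal{U}|Y)\,dm(\omega)$, while the ergodic-decomposition theorem for $\mathrm{Asc}^-$ stated just after Theorem~\ref{thm3.4} gives $\mathrm{Asc}_\mu^-(T,\mathcal{U}|Y)=\int_\Omega\mathrm{Asc}_{\mu_\omega}^-(T,\mathcal{U}|Y)\,dm(\omega)$. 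Hence it suffices to prove the asserted equality for an ergodic measure $\mu$.

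If such a $\mu$ is atomic it is equidistributed on a single periodic orbit of some period $p$, so its support carries at most $p$ atoms of positive measure; consequently $H_\mu(\alpha_S|Y)\le H_\mu(\alpha_S)\le\log p$ and $H_\mu(\mathcal{U}_S|Y)\le\log p$ for every finite $S$ and every $\alpha\in\mathcal{P}_X$, whence $\frac1n\sum_{S\subseteq n^*}c^n_SH_\mu(\alpha_S|Y)\le\frac{\log p}{n}\to 0$, forcing $\mathrm{Asc}_\mu^+(T,\mathcal{U}|Y)=\mathrm{Asc}_\mu^-(T,\mathcal{U}|Y)=0$. If $\mu$ is non-atomic and ergodic I would pass to a strictly ergodic topological model: by the (relative) Jewett--Krieger--Weiss strictly ergodic model theorem there exist a uniquely ergodic $\mathbb{Z}$-system $(\tilde X,\tilde T)$, a topological factor map $\tilde\pi:(\tilde X,\tilde T)\to(\tilde Y,\tilde R)$, a finite cover $\tilde{\mathcal{U}}\in\mathcal{C}_{\tilde X}$, and a measure isomorphism carrying the data $(X,\mu,T,\pi,\mathcal{U})$ onto $(\tilde X,\tilde\mu,\tilde T,\tilde\pi,\tilde{\mathcal{U}})$ and matching $\mathcal{U}$ with $\tilde{\mathcal{U}}$ modulo null sets. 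Since $\mathrm{Asc}_\mu^\pm(T,\mathcal{U}|Y)$ are built only from $\mu$, the factor $\sigma$-algebra $\pi^{-1}\mathcal{B}(Y)$, the conditional entropy functional $H_\mu(\cdot|Y)$, and the family of finite Borel partitions refining $\mathcal{U}$, and since none of these is affected when the members of $\mathcal{U}$ are altered by $\mu$-null sets, they are preserved by this isomorphism; hence $\mathrm{Asc}_\mu^\pm(T,\mathcal{U}|Y)=\mathrm{Asc}_{\tilde\mu}^\pm(\tilde T,\tilde{\mathcal{U}}|\tilde Y)$. Theorem~\ref{thm4.2}(2) applied to the uniquely ergodic system $(\tilde X,\tilde T)$ then gives $\mathrm{Asc}_{\tilde\mu}^+(\tilde T,\tilde{\mathcal{U}}|\tilde Y)=\mathrm{Asc}_{\tilde\mu}^-(\tilde T,\tilde{\mathcal{U}}|\tilde Y)$, and transporting the equality back finishes the proof.

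The hard part will be the non-atomic ergodic step. One must carefully verify that $\mathrm{Asc}_\mu^\pm(T,\mathcal{U}|Y)$ depend only on the measure system together with the factor and the cover taken modulo $\mu$-null sets --- in particular that the infimum over partitions $\alpha\succeq\mathcal{U}$ is not changed when each $U_i$ is modified by a null set --- and one must invoke the appropriate form of the strictly ergodic model theorem, which simultaneously realizes the prescribed factor $(Y,R)$ through a continuous factor map and the finite cover $\mathcal{U}$ in a uniquely ergodic topological model. Once these model-theoretic ingredients are secured, everything else is a routine transfer along the isomorphism combined with Theorem~\ref{thm4.2}.
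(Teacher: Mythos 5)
Your proposal is correct and follows essentially the same route as the paper: reduce to ergodic $\mu$ via the ergodic decomposition results of Section 3, then invoke the relative Jewett--Krieger (strictly ergodic model) theorem of Weiss to transfer $\mathrm{Asc}_\mu^\pm(T,\mathcal{U}|Y)$ to a uniquely ergodic model and conclude by Theorem \ref{thm4.2}(2). Your extra care with the atomic ergodic case and with the invariance of $\mathrm{Asc}_\mu^\pm$ under measure-theoretic isomorphism (mod null sets) only makes explicit what the paper asserts in one line when writing $\mathrm{Asc}_\mu^\pm(T,\mathcal{U}|Y)=\mathrm{Asc}_{\widehat{\mu}}^\pm(\widehat{T},\phi(\mathcal{U})|\widehat{Y})$.
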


\begin{proof}
	Now we consider an ergodic measure $\mu$. Let $\nu=\pi\circ\mu$. According to the relative Jewett-Krieger theorem \cite{weiss1985strictly}, there exists a unique ergodic model $\widehat{\pi}:(\widehat{X},\widehat{T},\widehat{\mu})\to(\widehat{Y},\widehat{S},\widehat{\nu})$ of $\pi:(X,T,\mu)\to(Y,S,\nu)$, i.e., there exist measure-theoretical isomorphisms $\phi :(X,T,\mu)\to(\widehat{X},\widehat{T},\widehat{\mu})$ and $\psi :(Y,S,\nu)\to(\widehat{Y},\widehat{S},\widehat{\nu})$ such that $\phi\widehat{\pi}=\pi\psi$. Then
	
	\begin{equation}
		\begin{aligned}
			\mathrm{Asc}_\mu ^\pm (T,\mathcal{U}|Y)=\mathrm{Asc}_{\widehat{\mu} }^{\pm} (\widehat{T},\phi(\mathcal{U})|\widehat{Y}).
		\end{aligned}
	\end{equation}
	By Theorem \ref{thm4.2}, we have
	\begin{equation}
		\begin{aligned}
			\mathrm{Asc}_{\widehat{\mu} }^- (\widehat{T},\phi(\mathcal{U})|\widehat{Y})=\mathrm{Asc}_{\widehat{\mu} }^+(\widehat{T},\phi(\mathcal{U})|\widehat{Y}).
		\end{aligned}
	\end{equation}
	Combining the above equations, we get 
	$\mathrm{Asc}_\mu ^+(T,\mathcal{U}|Y)=\mathrm{Asc}_\mu ^-(T,\mathcal{U}|Y).$
\end{proof}

\subsection{Equivalence of two kinds of measure-theoretic average sample complexity of amenable group}
In this subsection, we shall introduce an orbital approach to local average sample complexity for amenable group actions. We will show that two kinds of measure-theoretic average sample complexity of covers are equal for an amenable group $G$ by combining the orbital approach with the equivalence of measure-theoretic average sample complexity in the case of $G=\mathbb{Z}$.

Let $\varepsilon>0$ and $\mathcal{T}$, $\mathcal{S}\in I(\mathcal{R})$. We write $\mathcal{T}\subseteq_{\varepsilon}\mathcal{S}$ if there is $A\in \mathcal{B}_X$ such that $\mu(A)>1-\varepsilon$ and 
$$|\{y\in\mathcal{S}(x):\mathcal{T}(y)\subseteq \mathcal{S}(x)\}|>(1-\varepsilon)|\mathcal{S}(x)|\text{ for each } x\in A.$$
The following lemmas are proved in \cite{danilenko2001entropy}.

\begin{lemma}\label{lem4.2}\cite[Lemma 4.1] {huang2011local}
	Let $\varepsilon>0$ and $\mathcal{T}$, $\mathcal{S}\in I(\mathcal{R})$. Then $A_0=\{x\in A:\mathcal{T}(x)\subseteq\mathcal{S}(x)\}$ is $\mathcal{T}$-invariant, $\mu(A_0)>1-2\varepsilon$ and $|\mathcal{S}(x)\cap A_0|>(1-\varepsilon)|\mathcal{S}(x)|$ for each $x\in A_0$.
\end{lemma}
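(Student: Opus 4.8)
The plan is to reduce everything to the situation where the witness set $A$ is a union of $\mathcal{S}$-classes, and then to read off the three claims from the normalized counting measures carried by those classes. First I would note that the inequality defining $\mathcal{T}\subseteq_{\varepsilon}\mathcal{S}$, namely $|\{y\in\mathcal{S}(x):\mathcal{T}(y)\subseteq\mathcal{S}(x)\}|>(1-\varepsilon)|\mathcal{S}(x)|$, depends only on the class $\mathcal{S}(x)$; hence the set $\widetilde A$ of all points satisfying it is $\mathcal{S}$-invariant, contains $A$, and still satisfies $\mu(\widetilde A)>1-\varepsilon$. Replacing $A$ by $\widetilde A$, we may assume $A$ is $\mathcal{S}$-invariant. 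Observe also that for $y\in\mathcal{S}(x)$ we have $\mathcal{S}(y)=\mathcal{S}(x)$, so that $\mathcal{T}(y)\subseteq\mathcal{S}(x)$ is equivalent to $\mathcal{T}(y)\subseteq\mathcal{S}(y)$; consequently, for $x\in A$, the set $\mathcal{S}(x)\cap A_0$ coincides with $\{y\in\mathcal{S}(x):\mathcal{T}(y)\subseteq\mathcal{S}(y)\}$, which has more than $(1-\varepsilon)|\mathcal{S}(x)|$ points. Since $A_0\subseteq A$, this already yields the third assertion $|\mathcal{S}(x)\cap A_0|>(1-\varepsilon)|\mathcal{S}(x)|$ for every $x\in A_0$.

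Next I would verify the $\mathcal{T}$-invariance of $A_0$. If $x\in A_0$ and $(x,x')\in\mathcal{T}$, then $x'\in\mathcal{T}(x)\subseteq\mathcal{S}(x)\subseteq A$ because $A$ is $\mathcal{S}$-invariant, while $\mathcal{T}(x')=\mathcal{T}(x)\subseteq\mathcal{S}(x)=\mathcal{S}(x')$; hence $x'\in A_0$, and $A_0$ is $\mathcal{T}$-invariant.

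For the measure estimate I would use that $\mathcal{S}\in I(\mathcal{R})$ is a measure-preserving relation of type I: splitting $X$ according to the finite cardinality of the $\mathcal{S}$-classes and enumerating each class by elements of the full group $[\mathcal{S}]$ (which are $\mu$-preserving by definition), one sees that the conditional measures of $\mu$ along the $\sigma$-algebra $\mathcal{S}_{\mathrm{inv}}$ of $\mathcal{S}$-invariant sets are the uniform measures on the classes, i.e.\ $\mathbb{E}(1_{A_0}\mid\mathcal{S}_{\mathrm{inv}})(x)=|\mathcal{S}(x)\cap A_0|/|\mathcal{S}(x)|$. Since $A_0\subseteq A$ and $A$ is $\mathcal{S}$-invariant, this conditional expectation vanishes off $A$ and exceeds $1-\varepsilon$ on $A$, so $\mu(A_0)=\int_A\mathbb{E}(1_{A_0}\mid\mathcal{S}_{\mathrm{inv}})\,d\mu>(1-\varepsilon)\mu(A)>(1-\varepsilon)^{2}>1-2\varepsilon$.

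The single point that requires care --- and the only place the hypotheses that $\mathcal{S}$ is measure preserving and of type I are genuinely used --- is the identification of the conditional measures along $\mathcal{S}$ with the uniform measures on its classes. This is a standard ingredient of the orbital framework (see \cite{danilenko2001entropy,huang2011local}), and once it is available the remaining arguments are elementary combinatorics, so I do not expect any further obstacle.
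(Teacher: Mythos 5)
Your argument is correct, and it is worth noting that the paper itself gives no proof of this lemma at all: it is quoted from Huang--Ye--Zhang (Lemma 4.1 of \cite{huang2011local}), with the proofs attributed to Danilenko's orbital theory, so there is no in-paper argument to compare against; what you wrote is essentially the standard proof from that literature, carried out completely. Two points in your write-up deserve emphasis. First, your normalization step --- replacing the witness $A$ by the $\mathcal{S}$-invariant set $\widetilde A$ of all points satisfying the defining inequality (which is legitimate since that inequality depends only on the class $\mathcal{S}(x)$) --- is not merely convenient but necessary: for an arbitrary witness $A$ the set $A_0=\{x\in A:\mathcal{T}(x)\subseteq\mathcal{S}(x)\}$ need not be $\mathcal{T}$-invariant (delete a single point of a $\mathcal{T}$-class from $A$), so the statement is only true with this normalization, which is implicit in the source and which you correctly made explicit, just as you correctly supplied the implicit hypothesis $\mathcal{T}\subseteq_{\varepsilon}\mathcal{S}$. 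Second, the one non-elementary ingredient, the identification $\mathbb{E}(1_{A_0}\mid\mathcal{S}_{\mathrm{inv}})(x)=|\mathcal{S}(x)\cap A_0|/|\mathcal{S}(x)|$ for a measure-preserving type I relation, is exactly the fundamental-domain decomposition $X=\sqcup_i\sqcup_{g\in G_i}gB_i$, $\mathcal{S}\sim(B_i,G_i)$, which this paper itself records later in Section 4; citing that (or verifying it via measure-preserving enumerations of the finite classes) closes that step, and then your computation $\mu(A_0)=\int_A\mathbb{E}(1_{A_0}\mid\mathcal{S}_{\mathrm{inv}})\,d\mu>(1-\varepsilon)\mu(A)>(1-\varepsilon)^2>1-2\varepsilon$, together with the purely combinatorial verifications of $\mathcal{T}$-invariance and of the class-counting bound, is correct.
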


\begin{lemma}\label{lem4.3}\cite[Lemma 4.2] {huang2011local}
	Let $\varepsilon >0$ and $\mathcal{R} $ be hyperfinite with $\{\mathcal{R}_n\} _{n\in \mathbb{N} }$ a filtration of $\mathcal{R} $, then the following holds:
	\item[$\mathrm{(i)}$] When $\Gamma \subseteq [\mathcal{R}] $ is a countable subset with $|(\Gamma x)|<+\infty $ for $\mu$-a.e. $x\in X$, then there exists an $\mathcal{R}_n$-invariant subset $A_n$ such that $\mu(A_n)>1-\varepsilon$ and 
	$$\left | \left \{ y\in \mathcal{R}_n(x):\Gamma y\subseteq\mathcal{R}_n(x)  \right \}  \right | >(1-\varepsilon )|\mathcal{R}_n(x)|\text{ for each } x\in A_n \text{, as } n\to\infty.$$ 
	\item[$\mathrm{(ii)}$] If $\mathcal{S}\in I(\mathcal{R})$ then $\mathcal{S}\subseteq_\varepsilon\mathcal{R}_n$, if $n$ is large enough.
\end{lemma}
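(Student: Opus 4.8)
The plan is to obtain (i) and (ii) from one and the same mechanism, since in both cases one must produce, for all large $n$, a large $\mathcal{R}_n$-invariant set $A_n$ such that for every $x\in A_n$ a proportion $>1-\varepsilon$ of the points $y$ of the orbit $\mathcal{R}_n(x)$ have a prescribed finite set depending on $y$ (namely $\Gamma y$ in (i), $\mathcal{S}(y)$ in (ii)) contained in $\mathcal{R}_n(x)$, and the relevant condition on $y$ is monotone in $n$. First I would record the mechanism. Because each $\mathcal{R}_n$ is measure preserving and of type I, its full group $[\mathcal{R}_n]\subseteq Aut(X,\mu)$ acts transitively on $\mu$-a.e.\ orbit, so the conditional measures of $\mu$ over the sub-$\sigma$-algebra $\mathcal{B}_X^{\mathcal{R}_n}$ of $\mathcal{R}_n$-invariant sets are, orbit by orbit, normalized counting measures. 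Hence for any Borel set $B$ the $\mathcal{R}_n$-invariant function
\[
p_B^{(n)}(x):=\frac{|\mathcal{R}_n(x)\cap B|}{|\mathcal{R}_n(x)|}
\]
equals $\mathbb{E}\bigl(1_B\mid\mathcal{B}_X^{\mathcal{R}_n}\bigr)$, so $\int_X p_B^{(n)}\,d\mu=\mu(B)$. Consequently, given any sequence $\{B_n\}_{n\in\mathbb{N}}$ of Borel sets with $\mu(B_n)\to1$, Chebyshev's inequality applied to $1-p_{B_n}^{(n)}$ yields $\mu\bigl(\{x:p_{B_n}^{(n)}(x)\le1-\varepsilon\}\bigr)\le(1-\mu(B_n))/\varepsilon\to0$; thus for all large $n$ the set $A_n:=\{x:p_{B_n}^{(n)}(x)>1-\varepsilon\}$ is $\mathcal{R}_n$-invariant, has $\mu(A_n)>1-\varepsilon$, and satisfies $|\mathcal{R}_n(x)\cap B_n|>(1-\varepsilon)|\mathcal{R}_n(x)|$ for every $x\in A_n$. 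It then only remains to choose $B_n$ so that $\mathcal{R}_n(x)\cap B_n$ is exactly the set appearing in the statement.

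For (i) I would put $N(x):=\min\{n:\Gamma x\subseteq\mathcal{R}_n(x)\}$. As $\Gamma$ is countable with $\Gamma\subseteq[\mathcal{R}]$ and $\mathcal{R}=\bigcup_n\mathcal{R}_n$, for $\mu$-a.e.\ $x$ every pair $(x,\gamma x)$ with $\gamma\in\Gamma$ lies in some $\mathcal{R}_m$; since $|\Gamma x|<\infty$ for $\mu$-a.e.\ $x$, only finitely many of the points $\gamma x$ are distinct, so $N$ is finite $\mu$-a.e., and it is Borel because the $\mathcal{R}_n$ are Borel and $\Gamma$ is countable. Set $B_n:=\{x:N(x)\le n\}$, so $B_n\nearrow X$ mod $\mu$ and $\mu(B_n)\to1$. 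If $y\in\mathcal{R}_n(x)$ then $\mathcal{R}_n(y)=\mathcal{R}_n(x)$, hence $\Gamma y\subseteq\mathcal{R}_n(x)$ if and only if $N(y)\le n$; that is,
\[
\{y\in\mathcal{R}_n(x):\Gamma y\subseteq\mathcal{R}_n(x)\}=\mathcal{R}_n(x)\cap B_n,
\]
and the mechanism above finishes (i). For (ii) I would run the identical argument with $N(x):=\min\{n:\mathcal{S}(x)\subseteq\mathcal{R}_n(x)\}$: since $\mathcal{S}\in I(\mathcal{R})$ is of type I, $\mathcal{S}(x)$ is finite $\mu$-a.e., and $\mathcal{S}\subseteq\mathcal{R}=\bigcup_n\mathcal{R}_n$ forces $\mathcal{S}(x)\subseteq\bigcup_n\mathcal{R}_n(x)$, so again $N$ is finite $\mu$-a.e.\ and Borel; with $B_n:=\{x:N(x)\le n\}$ one has $\mu(B_n)\to1$ and, exactly as before, $\{y\in\mathcal{R}_n(x):\mathcal{S}(y)\subseteq\mathcal{R}_n(x)\}=\mathcal{R}_n(x)\cap B_n$, which is precisely $\mathcal{S}\subseteq_\varepsilon\mathcal{R}_n$ for $n$ large.

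The hard part will be the single analytic input in the mechanism, namely the identity $\mathbb{E}\bigl(1_B\mid\mathcal{B}_X^{\mathcal{R}_n}\bigr)=p_B^{(n)}$, i.e.\ the uniformity of the orbital conditional measures; this is where the measure-preserving hypothesis on $\mathcal{R}_n$ is genuinely used, and it can be established either from the transitivity of $[\mathcal{R}_n]$ on $\mu$-a.e.\ orbit together with $[\mathcal{R}_n]\subseteq Aut(X,\mu)$, or by passing to an $\mathcal{R}_n$-fundamental domain and invoking the $[\mathcal{R}_n]$-invariance of $\mu$. Everything else is bookkeeping with the filtration and a single use of Chebyshev's inequality; a complete treatment along these lines is carried out in \cite{danilenko2001entropy,huang2011local}.
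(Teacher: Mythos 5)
Your argument is correct. Note that the paper gives no proof of this lemma at all — it only cites \cite[Lemma 4.2]{huang2011local} (and \cite{danilenko2001entropy}) — and your mechanism, identifying the orbit average $p_B^{(n)}$ with $\mathbb{E}\bigl(1_B\mid\mathcal{B}_X^{\mathcal{R}_n}\bigr)$ for the finite measure-preserving relations $\mathcal{R}_n$ and then applying Markov's inequality to the increasing sets $B_n$, is essentially the standard argument carried out in those references, with both (i) and (ii) correctly reduced to the same monotone-set statement. The only detail left implicit is the Borel measurability of $B_n$ in case (ii), which follows from writing $\mathcal{S}$ as the orbit relation of a countable subgroup of $Aut(X,\mu)$ (or from Lusin--Novikov), so it is a minor omission rather than a gap.
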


The following definitions introduce the local average sample complexity with orbital approach.
\begin{definition}
	Let $(Y,\mathcal{B} _Y,\nu )$ be Lebesgue space and $\phi :\mathcal{R} \to Aut (Y,\nu )$ a cocycle. For $
	\mathcal{U} \in \mathcal{C} _{X\times Y}$, we define 
	$$\mathrm{Asc} _\nu ^-(\mathcal{S} ,\phi ,\mathcal{U})=\int_X\frac{1}{|\mathcal{S} (x)|} \sum _{B\subseteq\mathcal{S} (x)}c^{\mathcal{S} (x)}_BH_\nu \left(\bigvee_{y\in B}\phi (x,y)\mathcal{U}_y\right)d\mu(x)\text{, }$$
	
	$$\mathrm{Asc} _\nu^+ (\mathcal{S} ,\phi ,\mathcal{U})=\inf_{\alpha\in \mathcal{P} _{X\times Y},\alpha \succeq \mathcal{U}}\mathrm{Asc} _\nu^- (\mathcal{S} ,\phi ,\alpha), $$
	where $\{\mathcal{U}_y\}_{y\in X}\subseteq\mathcal{C}_Y$ with $\{y\}\times \mathcal{U}_y=\mathcal{U}\cap(\{y\}\times Y)$.
	
	Then we define the $\nu^-$-average sample complexity $\mathrm{Asc} _\nu^-(\phi ,\mathcal{U})$ and $\nu^+$-average sample complexity $\mathrm{Asc} _\nu^+(\phi ,\mathcal{U})$, respectively, by
	$$\mathrm{Asc} _\nu^-(\phi ,\mathcal{U})=\inf _{\mathcal{S}\in I(\mathcal{R})} \mathrm{Asc} _\nu^-(\mathcal{S} ,\phi ,\mathcal{U})\text{, }\mathrm{Asc} _\nu^+(\phi ,\mathcal{U})=\inf _{\mathcal{S}\in I(\mathcal{R})} \mathrm{Asc} _\nu^+(\mathcal{S} ,\phi ,\mathcal{U}).$$
	
\end{definition}
\begin{remark}Let $\beta \in\mathcal{P} _{X\times Y}$ and $\mathcal{U},\mathcal{V}\in \mathcal{C}  _{X\times Y}$.
	\item[$\mathrm{(1)}$] It is easy to obtain that $\mathrm{Asc} _\nu^-(\mathcal{S},\phi ,\beta)=\mathrm{Asc} _\nu^+(\mathcal{S},\phi ,\beta)\text{ and }\mathrm{Asc} _\nu^-(\phi ,\beta)=\mathrm{Asc} _\nu^+(\phi ,\beta).$
	\item[$\mathrm{(2)}$] If $\mathcal{U} \succeq \mathcal{V} $ then $\mathrm{Asc} _\nu^-(\mathcal{S},\phi ,\mathcal{U})\ge\mathrm{Asc} _\nu^-(\mathcal{S},\phi ,\mathcal{V})$ and $\mathrm{Asc} _\nu^+(\mathcal{S},\phi ,\mathcal{U})\ge\mathrm{Asc} _\nu^+(\mathcal{S},\phi ,\mathcal{V})$.
	\item[$\mathrm{(3)}$] It is not hard to show that $\mathrm{Asc} _\nu^+(\mathcal{S},\phi ,\mathcal{U})\ge\mathrm{Asc} _\nu^-(\mathcal{S},\phi ,\mathcal{U})$.
\end{remark}
\begin{proposition}\label{prop4.1}
	Let $(Y,\mathcal{B} _Y,\nu )$ be a Lebesgue space, $\beta :\mathcal{S} \to Aut (Z,\kappa )$ a cocycle, $\mathcal{S}\in I(\mathcal{R})$, $\sigma :Z\times X\to X\times Z,(x,z)\mapsto (z,x)$ the flip and $\mathcal{U}\in\mathcal{C}_{X\times Y}$.
	\item[$\mathrm{(i)}$] Let $\alpha':\sigma ^{-1}\mathcal{S} (\beta)\sigma \to Aut(Y,\nu)$ and $\alpha :\mathcal{S}\to Aut(Y,\nu ) $ be two cocycles with $\alpha '((z,x),(z',x'))=\alpha (x,x')$ when $((z,x),(z',x'))\in\sigma ^{-1}\mathcal{S} (\beta )\sigma$. Then $\mathrm{Asc} _\nu^-(\sigma ^{-1}\mathcal{S} (\beta )\sigma ,\alpha' ,Z\times\mathcal{U})=\mathrm{Asc} _\nu^-(\mathcal{S},\alpha ,\mathcal{U}).$
	\item[$\mathrm{(ii)}$] Let $\alpha'':\mathcal{S} (\beta )\to Aut(Y,\nu )$ and $\alpha :\mathcal{S}\to Aut(Y,\nu )$ be two cocycles with $\alpha ''((z,x),(z'',x''))=\alpha (x,x'')$ when $((z,x),(z',x''))\in\mathcal{S} (\beta )$. Then if $\mathcal{U}''\in\mathcal{C}_{X\times Z\times Y}$ with $\mathcal{U}''_{(x,z)}=\mathcal{U}_x$ for each $(x,z)\in X\times Z$, then $\mathrm{Asc} _\nu^-(\mathcal{S} (\beta ) ,\alpha'' ,\mathcal{U}'')=\mathrm{Asc} _\nu^-(\mathcal{S},\alpha ,\mathcal{U})$.
\end{proposition}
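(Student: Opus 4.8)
The plan is to prove part (ii) directly from the defining integral and then to deduce part (i) by transporting (ii) through the coordinate flip $\sigma$.

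For (ii), I would first record that, by the definition of the skew product, for $(\mu\times\kappa)$-a.e.\ $(x,z)\in X\times Z$ one has
\[
\mathcal{S}(\beta)(x,z)=\{(x',\beta(x,x')z):x'\in\mathcal{S}(x)\},
\]
so in particular $\mathcal{S}(\beta)$ is again of type I, $|\mathcal{S}(\beta)(x,z)|=|\mathcal{S}(x)|<+\infty$ a.e., and the first-coordinate projection $\Phi_{(x,z)}\colon\mathcal{S}(\beta)(x,z)\to\mathcal{S}(x)$, $(x',z')\mapsto x'$, is a bijection. The heart of the argument is then a term-by-term comparison of the two inner sums. If $\widetilde B\subseteq\mathcal{S}(\beta)(x,z)$ corresponds to $B:=\Phi_{(x,z)}(\widetilde B)\subseteq\mathcal{S}(x)$, then $|\widetilde B|=|B|$ and $|\mathcal{S}(\beta)(x,z)|=|\mathcal{S}(x)|$; since the system of coefficients $c^A_S=\int_{[0,1]}t^{|S|}(1-t)^{|A|-|S|}\lambda(dt)$ (or $c^A_S=2^{-|A|}$) depends only on the two cardinalities $|S|$ and $|A|$, it follows that $c^{\mathcal{S}(\beta)(x,z)}_{\widetilde B}=c^{\mathcal{S}(x)}_B$. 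Moreover, by the hypotheses $\mathcal{U}''_{(x',z')}=\mathcal{U}_{x'}$ and $\alpha''((x,z),(x',z'))=\alpha(x,x')$, one gets
\[
\bigvee_{(x',z')\in\widetilde B}\alpha''((x,z),(x',z'))\,\mathcal{U}''_{(x',z')}=\bigvee_{x'\in B}\alpha(x,x')\,\mathcal{U}_{x'},
\]
so the corresponding $H_\nu(\cdot)$ terms agree. Summing over $\widetilde B$ (equivalently over $B$) shows that the integrand of $\mathrm{Asc}_\nu^-(\mathcal{S}(\beta),\alpha'',\mathcal{U}'')$ at $(x,z)$ equals the integrand of $\mathrm{Asc}_\nu^-(\mathcal{S},\alpha,\mathcal{U})$ at $x$ and, in particular, does not depend on $z$. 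Since $\kappa$ is a probability measure, Fubini then gives $\mathrm{Asc}_\nu^-(\mathcal{S}(\beta),\alpha'',\mathcal{U}'')=\mathrm{Asc}_\nu^-(\mathcal{S},\alpha,\mathcal{U})$.

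For (i), I would observe that the coordinate flip $\sigma$ is a measure isomorphism carrying the relation $\sigma^{-1}\mathcal{S}(\beta)\sigma$ to $\mathcal{S}(\beta)$, the cover $Z\times\mathcal{U}$ to the cover $\mathcal{U}''$ of part (ii) (whose fibres are $\mathcal{U}''_{(x,z)}=\mathcal{U}_x$), and — because $\alpha'((z,x),(z',x'))=\alpha(x,x')=\alpha''((x,z),(x',z'))$ — the cocycle $\alpha'$ to $\alpha''$. Since $\mathrm{Asc}_\nu^-$ is invariant under such a relabelling of the base (which amounts to a change of variables in the defining integral, and which I would record as a brief preliminary remark), part (i) reduces immediately to part (ii). Alternatively, one can simply repeat the cardinality-and-Fubini argument of the previous paragraph verbatim, using the second-coordinate projection of the orbits of $\sigma^{-1}\mathcal{S}(\beta)\sigma$ in place of $\Phi_{(x,z)}$.

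I do not expect a genuine obstacle here: the main point requiring attention is the bookkeeping, namely checking that $\mathcal{S}(\beta)$ is of type I with the stated orbit description (immediate from the definition of the skew product) and that $x\mapsto\frac{1}{|\mathcal{S}(x)|}\sum_{B\subseteq\mathcal{S}(x)}c^{\mathcal{S}(x)}_B H_\nu(\bigvee_{x'\in B}\alpha(x,x')\mathcal{U}_{x'})$ is measurable and integrable, so that Fubini applies; the latter is part of the well-definedness of $\mathrm{Asc}_\nu^-$ and is handled exactly as in \cite{huang2011local}. The substantive content of the proposition is simply that the coefficient system depends only on the relevant cardinalities, and that under the stated hypotheses neither the cocycle nor the cover detects the auxiliary coordinate $Z$.
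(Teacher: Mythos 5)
Your proposal is correct and follows essentially the same route as the paper: a direct term-by-term comparison using the orbit description of the skew product, the fact that the coefficients $c^A_S$ depend only on the cardinalities $|S|$ and $|A|$, the agreement of the cocycle and cover fibres, and Fubini to integrate out the auxiliary coordinate. The only (cosmetic) difference is that you prove (ii) first and transport it to (i) through the flip $\sigma$, while the paper computes (i) directly and declares (ii) similar.
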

\begin{proof}
	Since the proof for (ii) is similar to (i), we prove (i). Let $\mathcal{U}\in\mathcal{C}_{X\times Y}$. Then
	$$
	\begin{aligned}
		&\mathrm{Asc} _\nu^-(\sigma ^{-1}\mathcal{S} (\beta )\sigma ,\alpha' ,Z\times \mathcal{U})=\int\limits_{Z\times X}\frac{1}{|\mathcal{S} (x)| }\sum \limits_{B\subseteq\mathcal{S} (\beta )(x,z)}c^{\mathcal{S} (\beta )(x,z)}_{B}H_\nu\left ( \bigvee \limits_{(x',z')\in B} \alpha(x,x')\mathcal{U}_{x'} \right )d\kappa \times \mu(z,x)\\
		=&\int\limits_{ X}\frac{1}{|\mathcal{S} (x)| }\sum \limits_{B\subseteq\mathcal{S} (x)}c^{\mathcal{S} (x)}_{B}H_\nu\left ( \bigvee _{x'\in B} \alpha(x,x')\mathcal{U}_{x'} \right )d\mu(x)
		=\mathrm{Asc} _\nu^-(\mathcal{S},\alpha ,\mathcal{U}).
	\end{aligned}
	$$\end{proof}
\begin{proposition}\label{prop4.2}
	Let $\varepsilon>0$ and $\mathcal{T},\mathcal{S}\in I(\mathcal{R})$. If $\mathcal{T}\subseteq_{\varepsilon}\mathcal{S}$, then
	$$\mathrm{Asc} _\nu^-(\mathcal{S},\phi ,\mathcal{U})\le \mathrm{Asc} _\nu^-(\mathcal{T},\phi ,\mathcal{U})+3\varepsilon \log N(\mathcal{U}),\mathrm{Asc} _\nu^+(\mathcal{S},\phi ,\mathcal{U})\le \mathrm{Asc} _\nu^+(\mathcal{T},\phi ,\mathcal{U})+3\varepsilon \log N(\mathcal{U}).$$
\end{proposition}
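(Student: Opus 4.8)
The plan is to use the relation $\mathcal{T}\subseteq_\varepsilon\mathcal{S}$ to split almost every $\mathcal{S}$-class into full $\mathcal{T}$-classes plus a small error set, and then to propagate the sub-additivity of the average-sample-complexity functional through the fibre integral defining $\mathrm{Asc}_\nu^{\pm}$. First I would apply Lemma \ref{lem4.2} to the set $A$ witnessing $\mathcal{T}\subseteq_\varepsilon\mathcal{S}$, obtaining a $\mathcal{T}$-invariant set $A_0$ with $\mu(A_0)>1-2\varepsilon$ such that $|\mathcal{S}(x)\cap A_0|>(1-\varepsilon)|\mathcal{S}(x)|$ for every $x\in A_0$. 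For such $x$, if $z\in\mathcal{S}(x)\cap A_0$ then $\mathcal{T}(z)\subseteq\mathcal{S}(z)=\mathcal{S}(x)$ and, by $\mathcal{T}$-invariance of $A_0$, $\mathcal{T}(z)\subseteq\mathcal{S}(x)\cap A_0$; hence $\mathcal{S}(x)\cap A_0$ is a disjoint union $\mathcal{T}(z_1)\sqcup\cdots\sqcup\mathcal{T}(z_m)$ of $\mathcal{T}$-classes, and $\mathcal{S}(x)=\mathcal{T}(z_1)\sqcup\cdots\sqcup\mathcal{T}(z_m)\sqcup R_x$ with $|R_x|<\varepsilon|\mathcal{S}(x)|$.

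Next, for fixed $x$ I would work with the set function $B\mapsto\Phi_x(B):=\sum_{B'\subseteq B}c^{B}_{B'}H_\nu\bigl(\bigvee_{y\in B'}\phi(x,y)\mathcal{U}_y\bigr)$ on subsets of $\mathcal{S}(x)$. Since $B'\mapsto H_\nu(\bigvee_{y\in B'}\phi(x,y)\mathcal{U}_y)$ is monotone and sub-additive (sub-additivity of $H_\nu$ of a cover under $\vee$), the combinatorial estimate underlying Remark \ref{rmk3.1}(1) (cf. \cite[Theorem 3.4]{xiao2024pressure}) shows that $\Phi_x$ is sub-additive; iterating over the above decomposition gives
$$\Phi_x(\mathcal{S}(x))\le\sum_{j=1}^{m}\Phi_x(\mathcal{T}(z_j))+\Phi_x(R_x),\qquad \Phi_x(R_x)\le|R_x|\log N(\mathcal{U})<\varepsilon|\mathcal{S}(x)|\log N(\mathcal{U}),$$
where the bound on $\Phi_x(R_x)$ uses $H_\nu(\bigvee_{y\in B'}\phi(x,y)\mathcal{U}_y)\le\log N(\bigvee_{y\in B'}\mathcal{U}_y)\le|B'|\log N(\mathcal{U})$ (each $N(\mathcal{U}_y)\le N(\mathcal{U})$) together with $\sum_{B'\subseteq R_x}c^{R_x}_{B'}|B'|\le|R_x|$. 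By the cocycle identity $\phi(x,y)=\phi(x,z)\phi(z,y)$ and the $\nu$-invariance of $\phi(x,z)\in Aut(Y,\nu)$, each $\Phi_x(\mathcal{T}(z))$ is independent of the base point and equals $\psi(z):=\sum_{B'\subseteq\mathcal{T}(z)}c^{\mathcal{T}(z)}_{B'}H_\nu\bigl(\bigvee_{y\in B'}\phi(z,y)\mathcal{U}_y\bigr)$, and $\psi$ is constant on $\mathcal{T}$-classes; hence $\sum_{j=1}^{m}\Phi_x(\mathcal{T}(z_j))=\sum_{w\in\mathcal{S}(x)\cap A_0}\psi(w)/|\mathcal{T}(w)|\le\sum_{w\in\mathcal{S}(x)}\psi(w)/|\mathcal{T}(w)|$.

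Then I would divide by $|\mathcal{S}(x)|$ and integrate. Since $\mathcal{S}\in I(\mathcal{R})$ is measure preserving and of type I, the orbit-averaging operator $g\mapsto\frac1{|\mathcal{S}(\cdot)|}\sum_{w\in\mathcal{S}(\cdot)}g(w)$ preserves $\mu$, so for $g\ge 0$ one has $\int_{A_0}\frac1{|\mathcal{S}(x)|}\sum_{w\in\mathcal{S}(x)}g(w)\,d\mu(x)\le\int_X g\,d\mu$; applying this with $g(w)=\psi(w)/|\mathcal{T}(w)|$ and noting $\int_X\psi(w)/|\mathcal{T}(w)|\,d\mu(w)=\mathrm{Asc}_\nu^-(\mathcal{T},\phi,\mathcal{U})$. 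On $X\setminus A_0$, which has measure $<2\varepsilon$, I bound $\Phi_x(\mathcal{S}(x))/|\mathcal{S}(x)|\le\log N(\mathcal{U})$ directly. Adding the three contributions gives $\mathrm{Asc}_\nu^-(\mathcal{S},\phi,\mathcal{U})\le\mathrm{Asc}_\nu^-(\mathcal{T},\phi,\mathcal{U})+3\varepsilon\log N(\mathcal{U})$. For the $+$ version, I would observe that by the monotonicity stated in the Remark following the definition of $\mathrm{Asc}_\nu^{\pm}$ the infimum over $\alpha\succeq\mathcal{U}$ can be restricted to partitions in $\mathcal{U}^*$ (atoms inside the members of $\mathcal{U}$, so $\alpha_y\in\mathcal{U}_y^*$ and the relevant cardinalities stay controlled by $N(\mathcal{U})$); choosing such an $\alpha$ nearly optimal for $\mathrm{Asc}_\nu^-(\mathcal{T},\phi,\cdot)$, applying the already-proved $-$ estimate to it, and sending the slack to $0$ yields $\mathrm{Asc}_\nu^+(\mathcal{S},\phi,\mathcal{U})\le\mathrm{Asc}_\nu^+(\mathcal{T},\phi,\mathcal{U})+3\varepsilon\log N(\mathcal{U})$.

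The step I expect to be the main obstacle is the middle one: checking that the decomposition $\mathcal{S}(x)\cap A_0=\bigsqcup_j\mathcal{T}(z_j)$ is measurable in $x$, that $\Phi_x(\mathcal{T}(z))$ is genuinely base-point independent (this is where the cocycle identity and the measure-preservation of the transformations $\phi(x,z)$ are used), and that after dividing by $|\mathcal{S}(x)|$ the resulting $\mathcal{S}$-orbit-average integrates against $\mu$ to reproduce $\mathrm{Asc}_\nu^-(\mathcal{T},\phi,\mathcal{U})$ exactly; by contrast, the sub-additivity of $\Phi_x$ and the existence of the set $A_0$ are supplied, respectively, by the combinatorics behind Remark \ref{rmk3.1}(1)/\cite{xiao2024pressure} and by Lemma \ref{lem4.2}.
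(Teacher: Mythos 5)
Your proposal is correct and follows essentially the same route as the paper: you invoke Lemma \ref{lem4.2} to get the $\mathcal{T}$-invariant set $A_0$, decompose each $\mathcal{S}$-class inside $A_0$ into full $\mathcal{T}$-classes plus a remainder of proportion at most $\varepsilon$, use the subadditivity/marginalization of the coefficient-weighted entropy sum (the paper's passage from $f(x)$ to $\mathbb{E}(g|\mathcal{S}\cap(A_0\times A_0))(x)$) together with the cocycle identity and $\nu$-invariance, and budget the errors as $\varepsilon\log N(\mathcal{U})$ from the in-class remainder plus $2\varepsilon\log N(\mathcal{U})$ from $X\setminus A_0$, finally deducing the $+$ inequality from the $-$ one by restricting the infimum to partitions with covering number at most $N(\mathcal{U})$, exactly as the paper does. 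Your only cosmetic deviation is working with the unnormalized functional $\Phi_x$ and the full $\mathcal{S}$-orbit average instead of the paper's normalized $f,g$ and conditional expectation, which yields the same estimates.
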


\begin{proof}
	Let $\alpha\in\mathcal{P}_{X\times Y}$. If the first inequation has been proved, then we have $ \mathrm{Asc} _\nu ^-(\mathcal{S} ,\phi ,\alpha)\le\mathrm{Asc} _\nu ^-(\mathcal{T} ,\phi ,\alpha)+3\varepsilon\log N(\alpha)$. Thus
	$$
	\begin{aligned}
		&\mathrm{Asc} _\nu^+(\mathcal{S},\phi ,\mathcal{U})=\inf_{\alpha\in\mathcal{P}_{X\times Y},\alpha \succeq \mathcal{U} }\{\mathrm{Asc} _\nu(\mathcal{S},\phi ,\alpha),N(\alpha)\le N(\mathcal{U})\}\\
		\le&\inf_{\alpha\in\mathcal{P}_{X\times Y},\alpha \succeq \mathcal{U} }\{\mathrm{Asc} _\nu(\mathcal{S},\phi ,\alpha)+3\varepsilon \log N(\mathcal{U}),N(\alpha)\le N(\mathcal{U})\}= \mathrm{Asc} _\nu^+(\mathcal{T},\phi ,\mathcal{U})+3\varepsilon \log N(\mathcal{U}).
	\end{aligned}
	$$
	
	So we only need to show the first inequation. Let $A_0=\{x\in A:\mathcal{T}(x)\subseteq\mathcal{S}(x)\}$. By Lemma \ref{lem4.2}, we have $\mu(A_0)>1-2\varepsilon$ and $A_0$ is $\mathcal{T}$-invariant. We define the maps $f,g:A_0\to\mathbb{R}$ by
	$$f(x)=\frac{1}{|\mathcal{S} (x)\cap A_0|} \sum _{B\subseteq \mathcal{S} (x)\cap A_0}c^{\mathcal{S} (x)\cap A_0}_{B}H_\nu \left ( \bigvee _{y\in B} \phi(x,y)\mathcal{U}_y \right ),$$$$g(x)=\frac{1}{|\mathcal{T} (x)|} \sum _{B\subseteq \mathcal{T} (x)}c^{\mathcal{T} (x)}_{B}H_\nu \left ( \bigvee _{y\in B} \phi(x,y)\mathcal{U}_y \right ). $$
	Since $A_0$ is $\mathcal{T}$-invariant, for each $x\in A_0$, there exist $x_1,\dots,x_k\in X$ such that $\mathcal{S} (x)\cap A_0= \bigsqcup_{i=1}^{k}\mathcal{T}(x_i) $. For each $B\subseteq\mathcal{S} (x)\cap A_0$, we define $B_i=B\cap\mathcal{T}(x_i)$ with $B=\bigsqcup_{i=1}^{k}B_i$. Then
	$$
	\begin{aligned}
		f(x)
		&\le  \frac{1}{|\mathcal{S} (x)\cap A_0|} \sum _{B\subseteq \bigsqcup_{i=1}^{k}\mathcal{T}(x_i) }\sum_{i=1}^kc^{\mathcal{T}(x_i) }_{B_i}H_\nu \left ( \bigvee _{y\in B_i} \phi(x,y)\mathcal{U}_y \right )=\frac{1}{|\mathcal{S} (x)\cap A_0|} \sum_{i=1}^k|\mathcal{T}(x_i)|\cdot g(x_i)\\
		&=\frac{1}{|\mathcal{S} (x)\cap A_0|} \sum_{i=1}^k\sum_{y\in\mathcal{T}(x_i)} g(y)=\mathbb{E}(g|\mathcal{S}\cap (A_o\times A_o))(x).
	\end{aligned}
	$$
	So
	$$
	\begin{aligned}
		\mathrm{Asc} _\nu ^-(\mathcal{S} ,\phi ,\mathcal{U})
		&\le \int\limits_{A_o}\left(f(x)+ \sum _{B\subseteq\mathcal{S} (x)}c^{\mathcal{S} (x)}_B\frac{|B\setminus A_0|}{|\mathcal{S} (x)|}\log N(\mathcal{U})\right)d\mu(x)+2\varepsilon\log N(\mathcal{U})\\
		&\le \int\limits_{A_o}\mathbb{E}(g|\mathcal{S}\cap (A_o\times A_o))(x)d\mu(x)+3\varepsilon\log N(\mathcal{U})\le\mathrm{Asc} _\nu ^-(\mathcal{T} ,\phi ,\mathcal{U})+3\varepsilon\log N(\mathcal{U}).
	\end{aligned}
	$$
	We finish the proof.
\end{proof}
By Lemma \ref{lem4.1} and Proposition \ref{prop4.2}, we have the following remark.
\begin{remark}\label{rmk4.2}
	Let $\mathcal{R} $ be hyperfinite with $\left \{ \mathcal{R} _n \right \} _{n\in \mathbb{N} }$ a filtration of $\mathcal{R}$. Then 
	$$
	\lim_{n \to \infty} \mathrm{Asc} _\nu ^-(\mathcal{R} _n,\phi ,\mathcal{U})=\mathrm{Asc} _\nu ^-(\phi ,\mathcal{U}) \text{ }\mathrm{ , } \lim_{n \to \infty} \mathrm{Asc} _\nu ^+(\mathcal{R} _n,\phi ,\mathcal{U})=\mathrm{Asc} _\nu ^+(\phi ,\mathcal{U}).
	$$
\end{remark}
Let $\mathcal{R}$ be generated by a free $G$-measure preserving system $(X,\mathcal{B}_X,\mu,G)$. Then $\mathcal{R}$ is hyperfinite and  conservative. Let $\mathcal{S}\in I(\mathcal{R})$, so there exists $B\in\mathcal{B}_X$ an $\mathcal{S}$-fundamental i.e. $|B\cap\mathcal{S}(x)|=1 $. Then there is a measurable map $\theta :B\to \mathcal{F}(G)$ with $x\mapsto G_x$ and $G_xx=\mathcal{S}(x)$. Since $\mathcal{F}(G)$ is a countable set, we can show that $X=\sqcup _i\sqcup_{g\in G_i}gB_i$ where $\sqcup $ denotes the disjoint union and $\{G_i\}_i \subseteq\mathcal{F}(G)$ with $G_ix=S(x)$ and $B=\sqcup_i B_i$ for each $x\in B_i$. We write it as  $\mathcal{S}\sim(B_i,G_i)$. So we have

$$
\begin{aligned}
	\mathrm{Asc} _\nu ^-(\mathcal{S},\phi ,\mathcal{U})&=\sum \limits_i\int\limits_{B_i}\sum\limits _{D\subseteq G_i} c^{G_i}_{D}H_\nu\left( \bigvee\limits _{g\in D}\phi(x,gx)\mathcal{U}_{gx}\right)d\mu(x).
\end{aligned}
$$

\begin{definition}
	Let $(Y,\mathcal{B}_Y,\nu,G)$ be a $G$-measure preserving system, $\mathcal{U}\in\mathcal{C}_Y$, $\prod _g\in Aut(Y,\nu)$ the action of $g\in G$ on $(Y,\mathcal{B}_Y,\nu,G)$ and $\phi _G:\mathcal{R}\to   Aut(Y,\nu)$ a cocycle with $\phi _G(gx,x)=\prod _g$ for any $x\in X, g\in G$. We define \textit{the $\nu^+$$(\nu^-)$-virtual average sample complexity of $\mathcal{U}$} by 
	$$
	\widehat{\mathrm{Asc}}_\nu ^-(G ,\mathcal{U}):=\mathrm{Asc} _\nu ^-(\phi_G ,X\times\mathcal{U})\text{ }
	\left(\widehat{\mathrm{Asc}}_\nu ^+(G ,\mathcal{U}):=\mathrm{Asc} _\nu ^+(\phi_G ,X\times\mathcal{U})\right).
	$$
\end{definition}

\begin{remark}
	It is easy to show that $\widehat{\mathrm{Asc}}_\nu ^-(G,\alpha)=\widehat{\mathrm{Asc}}_\nu ^+(G,\alpha)$ for each $\alpha\in\mathcal{P}_Y$ and $\widehat{\mathrm{Asc}}_\nu ^+(G,\mathcal{U})=\inf_{\alpha\in\mathcal{P}_{Y},\alpha \succeq \mathcal{U} }\widehat{\mathrm{Asc}}_\nu ^-(G ,\alpha)$ for each $\mathcal{U}\in\mathcal{C}_Y$.
	The proof of the well-defined $\nu^+$ and $(\nu^-)$-virtual average sample complexity of $\mathcal{U}$ can be referred to \cite[Proposition 4.8]{huang2011local}.
\end{remark}
\begin{lemma}\label{lem n}
	Let $(Y,\mathcal{B}_Y,\nu,G)$ be a $G$-measure preserving system, $\mathcal{U}\in\mathcal{C}_Y$ and $\varepsilon>0$. There exist $E\in \mathcal{F}(G)$ and $0<\varepsilon'<\varepsilon$ such that if $F$ is $(E,\varepsilon')$-invariant, then $$\left|\frac{1}{|F|} \sum _{S\subseteq F}c^F_SH_\nu(\mathcal{U} _S)-\mathrm{Asc} _\nu (G,\mathcal{U})\right|<\varepsilon.$$
\end{lemma}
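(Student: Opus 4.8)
The plan is to deduce the lemma from the Ornstein--Weiss convergence theorem (Theorem \ref{ow}) by a contradiction argument, in the standard way that one upgrades the existence of the limit defining a sub-additive invariant to a uniform estimate over sufficiently invariant sets. First I would record that the set function $\varphi:\mathcal{F}(G)\to[0,\infty)$ defined by $\varphi(F)=\sum_{S\subseteq F}c^F_SH_\nu(\mathcal{U}_S)$ is monotone, non-negative, $G$-invariant and sub-additive; this is the observation underlying Remark \ref{rmk3.1}(1) and Remark \ref{rmk3.2}(1), proved exactly as in \cite[Theorem 3.4]{xiao2024pressure} (monotonicity, for instance, comes from $\mathcal{U}_{S\cup\{g\}}\succeq\mathcal{U}_S$ together with Lemma \ref{lem3.1}(2) and the symmetry of the coefficients). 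Hence Theorem \ref{ow} applies and produces a constant $\lambda\ge 0$, which by definition is $\mathrm{Asc}_\nu(G,\mathcal{U})$, such that $\tfrac{1}{|F_n|}\varphi(F_n)\to\lambda$ for \emph{every} F{\o}lner sequence $\{F_n\}_{n\in\mathbb{N}}$ of $G$.

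Next I would argue by contradiction. Suppose the conclusion fails for some $\varepsilon>0$. Since $G$ is countable, fix an increasing exhaustion $E_1\subseteq E_2\subseteq\cdots$ of $G$ with $e_G\in E_1$ and $\bigcup_{n}E_n=G$, and set $\varepsilon'_n=\min\{1/n,\varepsilon/2\}\in(0,\varepsilon)$. Applying the negation of the conclusion to $E_n$ and $\varepsilon'_n$, for each $n$ we obtain an $(E_n,\varepsilon'_n)$-invariant set $F_n\in\mathcal{F}(G)$ with $\bigl|\tfrac{1}{|F_n|}\varphi(F_n)-\lambda\bigr|\ge\varepsilon$.

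I would then verify that $\{F_n\}_{n\in\mathbb{N}}$ is a F{\o}lner sequence. Given $E\in\mathcal{F}(G)$ and $\delta>0$, pick $N$ with $E\subseteq E_N$ and $1/N<\delta$. For every $n\ge N$ we have $E\subseteq E_n$, hence $Es\subseteq E_ns$ for all $s\in G$, which gives $\partial_EF_n\subseteq\partial_{E_n}F_n$; therefore $|\partial_EF_n|\le|\partial_{E_n}F_n|\le\varepsilon'_n|F_n|\le\tfrac1n|F_n|\le\delta|F_n|$, so $F_n$ is $(E,\delta)$-invariant. Thus $\{F_n\}_{n\in\mathbb{N}}$ is a F{\o}lner sequence, and Theorem \ref{ow} forces $\tfrac{1}{|F_n|}\varphi(F_n)\to\lambda$, contradicting the choice of the $F_n$. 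This contradiction establishes the lemma, with the $E$ and $\varepsilon'$ being the ones that exist by the (now justified) conclusion.

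The only genuine point requiring care is the reduction step: one must be sure that the sets $F_n$ produced by negating the statement really do form a F{\o}lner sequence, and this rests precisely on the monotonicity $\partial_EA\subseteq\partial_{E'}A$ for $E\subseteq E'$ together with letting $\varepsilon'_n\to 0$ along the exhaustion. Everything else is a direct appeal to tools already available (the m.n.i.s.a. property of $\varphi$ and Theorem \ref{ow}). A more self-contained alternative would be to extract the estimate directly from the $\varepsilon$-quasitiling machinery of Definition \ref{def2.6} used in the proof of Theorem \ref{ow}, but the contradiction argument above is shorter and fully sufficient here.
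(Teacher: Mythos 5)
Your argument is correct. Two small sanity checks both pass: the monotonicity $\partial_E A\subseteq\partial_{E'}A$ for $E\subseteq E'$ does follow directly from Definition \ref{def2.1}(2)(i), so the extracted sets $F_n$ really do form a F{\o}lner sequence in the sense of Definition \ref{def2.1}(3); and the quantity $\lambda$ produced by Theorem \ref{ow} for $\varphi(F)=\sum_{S\subseteq F}c^F_S H_\nu(\mathcal{U}_S)$ is exactly $\mathrm{Asc}_\nu^-(G,\mathcal{U})$, which is what $\mathrm{Asc}_\nu(G,\mathcal{U})$ means here, so the contradiction with the sequential convergence statement is genuine.

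Your route differs from the paper's in an instructive way: the paper gives no argument at all, only a citation to \cite{huang2011local}, where the corresponding statement is obtained by checking that the relevant set function is m.n.i.s.a.\ and then invoking the Ornstein--Weiss convergence theorem in its stronger form, namely convergence of $\varphi(F)/|F|$ as $F$ becomes more and more invariant (i.e.\ uniformly over sufficiently invariant sets), which yields the lemma immediately. You instead start from the weaker, sequential form actually stated as Theorem \ref{ow} in this paper and upgrade it to the uniform statement by the contradiction/F{\o}lner-extraction argument. The mathematical core (m.n.i.s.a.\ of $\varphi$, which the paper itself asserts in Remarks \ref{rmk3.1}(1) and \ref{rmk3.2}(1) via \cite{xiao2024pressure}, plus Ornstein--Weiss) is the same; what your version buys is a self-contained proof using only what is literally stated in this paper, at the cost of being non-constructive in $(E,\varepsilon')$, whereas the cited approach packages the uniformity into the convergence theorem itself. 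Either is acceptable; yours fills a gap the paper leaves to the reader.
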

\begin{proof}
	The detailed proof can be refer to \cite{huang2011local}.
\end{proof}
\begin{theorem}\label{thm4.4}
	Let $(Y,\mathcal{B}_Y,\nu,G)$ be a $G$-measure preserving system and $\mathcal{U}\in\mathcal{C}_Y$. Then
	$$\mathrm{Asc}_\nu ^-(G,\mathcal{U})=\widehat{\mathrm{Asc}}_\nu ^-(G,\mathcal{U})\text{ }\mathrm{and } \text{ }\mathrm{Asc}_\nu ^+(G,\mathcal{U})=\widehat{\mathrm{Asc}}_\nu ^-(G,\mathcal{U}).$$
	
\end{theorem}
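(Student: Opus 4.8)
The plan is to reduce the assertion to the $\mathbb{Z}$-case equivalence of Theorem~\ref{thm4.3} through the orbital machinery. The starting point is that, for a representative $\mathcal{S}\sim(B_i,G_i)$ of an element of $I(\mathcal{R})$ and any $\mathcal{W}\in\mathcal{C}_Y$, since $\phi_G(gx,x)=\prod_g$ and $\nu$ is $G$-invariant the term $H_\nu(\bigvee_{g\in D}\phi_G(x,gx)\mathcal{W})$ does not depend on $x$ and equals $H_\nu(\mathcal{W}_D)$; hence, using $\sum_i|G_i|\mu(B_i)=1$,
$$\mathrm{Asc}_\nu^-(\mathcal{S},\phi_G,X\times\mathcal{W})=\sum_i\mu(B_i)\sum_{D\subseteq G_i}c_D^{G_i}H_\nu(\mathcal{W}_D)=\sum_i|G_i|\mu(B_i)\cdot\frac{1}{|G_i|}\sum_{D\subseteq G_i}c_D^{G_i}H_\nu(\mathcal{W}_D)$$
is a convex combination of the tile averages $\frac{1}{|F|}\sum_{D\subseteq F}c_D^F H_\nu(\mathcal{W}_D)$, the quantities that for $F=F_n$ define $\mathrm{Asc}_\nu^-(G,\mathcal{W})$.

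\emph{Stage 1 (partitions).} For $\alpha\in\mathcal{P}_Y$ I would first show $\mathrm{Asc}_\nu(G,\alpha)=\widehat{\mathrm{Asc}}_\nu^-(G,\alpha)=\widehat{\mathrm{Asc}}_\nu^+(G,\alpha)$, the last equality being the Remark following the definition of the virtual average sample complexity. By Remark~\ref{rmk4.2} it is enough to compute $\lim_n\mathrm{Asc}_\nu^-(\mathcal{R}_n,\phi_G,X\times\alpha)$ along a single filtration $\{\mathcal{R}_n\}$ of $\mathcal{R}$, and I would choose one whose tiles are $(E_n,\varepsilon_n)$-invariant with $E_n\nearrow G$ and $\varepsilon_n\searrow 0$, which is possible because $\mathcal{R}$ is the orbit relation of a free action of an amenable group (the Ornstein--Weiss Rokhlin/quasi-tiling theory). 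Then by the displayed identity and Lemma~\ref{lem n}, each tile average of $\mathcal{R}_n$ eventually lies within $\varepsilon$ of $\mathrm{Asc}_\nu(G,\alpha)$, hence so does the convex combination; letting $n\to\infty$ gives $\widehat{\mathrm{Asc}}_\nu^-(G,\alpha)=\mathrm{Asc}_\nu(G,\alpha)$.

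\emph{Stage 2 (the two ``outer'' identities).} The same argument, now with a cover $\mathcal{U}$ in place of $\alpha$ (Lemma~\ref{lem n} is already stated for covers, and ``$\mathrm{Asc}_\nu(G,\mathcal{U})$'' there is $\mathrm{Asc}_\nu^-(G,\mathcal{U})$ of Definition~\ref{def3.3}), gives $\widehat{\mathrm{Asc}}_\nu^-(G,\mathcal{U})=\mathrm{Asc}_\nu^-(G,\mathcal{U})$, which is the first assertion. For the other one, Stage~1 yields $\widehat{\mathrm{Asc}}_\nu^-(G,\alpha)=\mathrm{Asc}_\nu(G,\alpha)$ for every partition $\alpha\succeq\mathcal{U}$, so by the Remark
$$\widehat{\mathrm{Asc}}_\nu^+(G,\mathcal{U})=\inf_{\alpha\in\mathcal{P}_Y,\ \alpha\succeq\mathcal{U}}\widehat{\mathrm{Asc}}_\nu^-(G,\alpha)=\inf_{\alpha\succeq\mathcal{U}}\mathrm{Asc}_\nu(G,\alpha)=\mathrm{Asc}_\nu^+(G,\mathcal{U}).$$
Thus everything reduces to the single equality $\widehat{\mathrm{Asc}}_\nu^-(G,\mathcal{U})=\widehat{\mathrm{Asc}}_\nu^+(G,\mathcal{U})$.

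\emph{Stage 3 (collapsing $\pm$ in the orbital picture), and the main obstacle.} Choose the free $G$-system generating $\mathcal{R}$ to be ergodic. As $\mathcal{R}$ is measure preserving, discrete and hyperfinite, it is also the orbit equivalence relation of a single $\widehat{T}\in Aut(X,\mu)$, and the $\phi_G$-skew product extension $\widehat{T}_{\phi_G}$ is a $\mathbb{Z}$-system on $(X\times Y,\mu\times\nu)$ for which the coordinate projection $\pi_X\colon(X\times Y,\widehat{T}_{\phi_G})\to(X,\widehat{T})$ is a factor map. One computes that the $x$-fibre of $(X\times\mathcal{U})_S=\bigvee_{i\in S}\widehat{T}_{\phi_G}^{-i}(X\times\mathcal{U})$ equals $\bigvee_{i\in S}\phi_G(x,\widehat{T}^i x)\mathcal{U}$; then, using a filtration of $\mathcal{R}$ built from Rokhlin towers of $\widehat{T}$ of increasing heights and the tower bookkeeping from the proof of Theorem~\ref{thm4.1} (in which the leftover and boundary contributions are $O(\varepsilon\log N(\mathcal{U}))$), one identifies
$$\widehat{\mathrm{Asc}}_\nu^{\pm}(G,\mathcal{U})=\mathrm{Asc}_{\mu\times\nu}^{\pm}(\widehat{T}_{\phi_G},\,X\times\mathcal{U}\mid X).$$
Theorem~\ref{thm4.3}, whose proof via the relative Jewett--Krieger theorem and ergodic decomposition uses no topology and therefore applies to this $\mathbb{Z}$-action on a Lebesgue space (passing, exactly as there, to a topological model over a topological model of the factor), then equates the two right-hand sides, whence $\widehat{\mathrm{Asc}}_\nu^-(G,\mathcal{U})=\widehat{\mathrm{Asc}}_\nu^+(G,\mathcal{U})$; with Stage~2 (which already gave $\mathrm{Asc}_\nu^-(G,\mathcal{U})=\widehat{\mathrm{Asc}}_\nu^-(G,\mathcal{U})$ and $\mathrm{Asc}_\nu^+(G,\mathcal{U})=\widehat{\mathrm{Asc}}_\nu^+(G,\mathcal{U})$) both assertions follow. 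I expect Stage~3 to be the main obstacle: one has to verify that the infimum over $I(\mathcal{R})$ is realized by towers of the single transformation $\widehat{T}$ and that the boundary corrections are negligible in the $c_S^{F_n}$-weighted averages --- the amenable-to-$\mathbb{Z}$ reduction, modelled on Theorem~\ref{thm4.1} and on \cite{huang2011local} --- and, secondarily, one must justify applying Theorem~\ref{thm4.3}, phrased for compact metric systems, to the abstract skew product $\widehat{T}_{\phi_G}$ over $(X,\widehat{T})$.
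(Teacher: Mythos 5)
Your Stages 1--2 are essentially the paper's own proof of this theorem: the convex-combination identity $\mathrm{Asc}^-_\nu(\mathcal{S},\phi_G,X\times\mathcal{W})=\sum_i|G_i|\mu(B_i)\cdot\frac{1}{|G_i|}\sum_{D\subseteq G_i}c^{G_i}_D H_\nu(\mathcal{W}_D)$, Lemma~\ref{lem n} to pin down the tile averages, Remark~\ref{rmk4.2} to pass to the limit along a filtration, and the Remark on virtual average sample complexity to obtain the $+$ identity by taking the infimum over partitions $\alpha\succeq\mathcal{U}$. The one genuine gap is your tiling step: you assert that one may \emph{choose} a filtration $\{\mathcal{R}_n\}$ of $\mathcal{R}$ all of whose classes are $(E_n,\varepsilon_n)$-invariant, citing Ornstein--Weiss quasi-tiling. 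Quasi-tiling only covers a $(1-\varepsilon)$-fraction of each orbit by $\varepsilon$-disjoint Følner-shaped tiles; the leftover points must be placed in some classes (singletons, or absorbed into neighbouring tiles), and neither option lets you guarantee that \emph{every} class of an increasing sequence of finite subrelations is nearly invariant --- that would require exact, nested, measurable Følner tilings, which is far stronger than anything cited here. The paper sidesteps this: it takes an \emph{arbitrary} filtration, invokes Lemma~\ref{lem4.3} to produce an $\mathcal{R}_n$-invariant set $A_n$ with $\mu(A_n)>1-\varepsilon'$ on which every class $G^{(n)}_i$ is $(E,\varepsilon')$-invariant (so Lemma~\ref{lem n} applies to those tiles), and bounds the contribution of $X\setminus A_n$ by $(1-\mu(A_n))\log N(\mathcal{U})$ using $f\le\log N(\mathcal{U})$. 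Your argument becomes correct once the ``all-Følner filtration'' claim is replaced by this Lemma~\ref{lem4.3} step; the rest of Stages 1--2 then coincides with the paper.

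Stage 3 is not part of the paper's proof of this theorem and is not needed for its intended content. The second equality in the printed statement is a misprint: it should read $\mathrm{Asc}^+_\nu(G,\mathcal{U})=\widehat{\mathrm{Asc}}^+_\nu(G,\mathcal{U})$, which is exactly what the proof establishes and what is used afterwards. The further equality $\widehat{\mathrm{Asc}}^-_\nu(G,\mathcal{U})=\widehat{\mathrm{Asc}}^+_\nu(G,\mathcal{U})$ --- equivalently $\mathrm{Asc}^-_\nu(G,\mathcal{U})=\mathrm{Asc}^+_\nu(G,\mathcal{U})$ --- is precisely what the paper defers to the skew-product identification of Theorem~\ref{thm4.5} and the reduction to the $\mathbb{Z}$-case Theorem~\ref{thm4.3} carried out in Theorem~\ref{thm4.6}; your Stage 3 reproduces that later argument (including the caveat, which you rightly flag, about applying the topologically stated Theorem~\ref{thm4.3} to the abstract skew product). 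So Stage 3 is sound in outline but imports the content of Theorems~\ref{thm4.5} and~\ref{thm4.6} into a statement whose proof in the paper consists only of your Stages 1--2.
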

\begin{proof}
	Let $\{\mathcal{R}_n\}_{n\in\mathbb{N}}$ be a filtration of $\mathcal{R}$ with $\mathcal{R}_n\sim (B_i^{(n)},G_i^{(n)})$ for each $n\in\mathbb{N}$. By the definition of $\mathrm{Asc}_\nu ^-(\phi_G,X\times \mathcal{U})$ and Lemma \ref{lem4.3}, for each $n$ large enough there exists a measurable $\mathcal{R}_n$-invariant subset $A_n\subseteq X$ such that $\mu(A_n)>1-\varepsilon'$ and 
	$$
	\left | \left \{ y\in \mathcal{R}_n(x):Ey\subseteq\mathcal{R}_n(x)  \right \}  \right | >(1-\varepsilon ')|\mathcal{R}_n(x)|\text{ for each } x\in A_n\text{, }E\in \mathcal{F}(G).
	$$ 
	Now we prove that
	$
	|\mathrm{Asc} _\nu ^-(\mathcal{R}_n,\phi_G, X\times \mathcal{U})-\mu(A_n)\mathrm{Asc} _\nu ^-(G,\mathcal{U})|\to0\text{, as }n\to\infty\text{, }\varepsilon\to0.
	$
	
    By Lemma \ref{lem n}, for each $\varepsilon>0$ there exist $E\in \mathcal{F}(G)$ and $0<\varepsilon'<\varepsilon$ such that if $F$ is $(E,\varepsilon')$-invariant, then $$|\frac{1}{|F|} \sum _{S\subseteq F}c^F_SH(\mathcal{U} _S)-\mathrm{Asc} _\nu (G,\mathcal{U})|<\varepsilon.$$ Since $A_n$ is $\mathcal{R}_n$-invariant, i.e. $A_n=\sqcup _{i\in J}G_i^{(n)}C_i^{(n)}$ for some subset $J\subseteq\mathbb{N}$ and measurable subsets $C_i^{(n)}\subseteq B_i^{(n)}$ satisfying $\mu(C_i^{(n)})>0$, $i\in J$. If $i\in J$, $x\in C_i^{(n)}$ and $g\in G_i^{(n)}$, we have
	$$
	(1-\varepsilon ')|\mathcal{R}_n(gx)|<\left | \left \{ y\in \mathcal{R}_n(gx):Ey\subseteq\mathcal{R}_n(gx)  \right \}  \right | =\left | \left \{ y\in \mathcal{R}_n(x):Ey\subseteq\mathcal{R}_n(x)  \right \}  \right | .
	$$ 
	So $(1-\varepsilon' )|G_i^{(n)}|<|\{g\in G_i^{(n)}:Eg\subseteq G_i^{(n)}\}|$, that is $G_i^{(n)}$ is $(E,\varepsilon')$-invariant. We define
	$$
	f(x)=\frac{1}{|\mathcal{R}_n(x)|} \sum _{B\subseteq \mathcal{R}_n(x)}c_B^{\mathcal{R}_n(x)}H_\nu \left(\bigvee _{y\in B}\phi_G(x,y)\mathcal{U}\right)\text{ for each } x\in X.
	$$
	It is easy to show that $f_n(x)\le \log N(\mathcal{U})$. So
	$$
	\int\limits_{A_n}f(x)d\mu (x)=\sum_{j\in J}\int\limits_{C_j^{(n)}}\sum_{D\subseteq {G_i}}c^{G_i}_DH_\nu \left(\bigvee _{g\in D} {\textstyle \prod_{g}^{-1}} \mathcal{U}\right)d\mu(x).
	$$
	So we have 
	$$
	\begin{aligned}
		&\left|\mathrm{Asc} _\nu ^-(\mathcal{R}_n,\phi_G, X\times \mathcal{U})-\mu(A_n)\mathrm{Asc} _\nu ^-(G,\mathcal{U})\right|
		\le\left|\int\limits_{A_n}\left(f(x)-\mathrm{Asc} _\nu ^-(G,\mathcal{U})\right)d\mu (x)+\int\limits_{X\setminus{A_n}}f(x)d\mu (x)\right|\\
		&\le\left( \sum\limits_{j\in J}|C_j^{(n)}|\mu(C_j^{(n)})\right)\varepsilon+(1-\mu(A_n))\log N(\mathcal{U})\to 0,\text{ as } n\to \infty \text{ and } \varepsilon\to0.
	\end{aligned}
	$$
	We obtain that $\mathrm{Asc}_\nu ^-(G,\mathcal{U})=\widehat{\mathrm{Asc}}_\nu ^-(G,\mathcal{U})$.
	Additionally, 
	$$
	\widehat{\mathrm{Asc}}_\nu ^+(G,\mathcal{U})=\inf_{\alpha\in\mathcal{P}(X)\alpha\succeq\mathcal{U}}\widehat{\mathrm{Asc}}_\nu (G,\alpha)=\inf_{\alpha\in\mathcal{P}(X)\alpha\succeq\mathcal{U}}\mathrm{Asc}_\nu (G,\alpha)=\mathrm{Asc}_\nu ^+(G,\mathcal{U}).
	$$
	The proof is finished.
\end{proof}

\begin{theorem}\label{thm4.5}
	Let $\gamma$ be an invertible measure-preserving transformation on $(X,\nu)$ generating $\mathcal{R}$, $\phi:\mathcal{R}\to Aut(Y,\nu)$ a cocycle, $\mathcal{U}\in\mathcal{C}_{X\times Y}$ and $\gamma_\phi$ be the $\phi$-skew product extension of $\gamma$. Then 
	$$
	\mathrm{Asc}_\nu ^-(\phi,\mathcal{U})=\mathrm{Asc}_{\mu\times\nu} ^-(\gamma_\phi,\mathcal{U}|\mathcal{B}_X\otimes \{\emptyset ,Y\})\text{, }\mathrm{Asc}_\nu ^+(\phi,\mathcal{U})=\mathrm{Asc}_{\mu\times\nu}^+(\gamma_\phi,\mathcal{U}|\mathcal{B}_X\otimes \{\emptyset ,Y\}).
	$$
\end{theorem}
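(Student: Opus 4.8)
The plan is to push both identities to the fibrewise picture over the base factor $(X,\gamma ,\mu )$ — here $\mu$ is the $\gamma$-invariant measure generating $\mathcal R$ — and then match the two resulting weighted averages, one indexed by F{\o}lner sets of $\mathbb Z$ and the other by type-I subrelations $\mathcal S\in I(\mathcal R)$; this follows the scheme used for conditional entropy in \cite{huang2011local} (and \cite{danilenko2001entropy}). First I would dispose of the $+$-identity: since $\mathrm{Asc}_\nu^+(\mathcal S,\phi ,\mathcal U)=\inf_{\alpha\in\mathcal P_{X\times Y},\,\alpha\succeq\mathcal U}\mathrm{Asc}_\nu^-(\mathcal S,\phi ,\alpha)$ and the infimum over $\mathcal S$ commutes with the one over $\alpha\succeq\mathcal U$, one gets $\mathrm{Asc}_\nu^+(\phi ,\mathcal U)=\inf_{\alpha\succeq\mathcal U}\mathrm{Asc}_\nu(\phi ,\alpha)$, while $\mathrm{Asc}_{\mu\times\nu}^+(\gamma_\phi ,\mathcal U|X)=\inf_{\alpha\succeq\mathcal U}\mathrm{Asc}_{\mu\times\nu}(\gamma_\phi ,\alpha|X)$ by Definition \ref{def3.3}; so it suffices to prove the $-$-identity $\mathrm{Asc}_\nu^-(\phi ,\mathcal U)=\mathrm{Asc}_{\mu\times\nu}^-(\gamma_\phi ,\mathcal U|X)$ for an arbitrary cover $\mathcal U\in\mathcal C_{X\times Y}$, partitions being the special case that then yields the $+$-identity.

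Next I would disintegrate the skew-product side. Since $\gamma_\phi$ acts on the $X$-coordinate by $\gamma$, the algebra $\mathcal B_X\otimes\{\emptyset ,Y\}$ is $\gamma_\phi$-invariant and $\mu\times\nu=\int_X(\delta_x\otimes\nu)\,d\mu(x)$, so $H_{\mu\times\nu}(\eta\,|\,\mathcal B_X\otimes\{\emptyset ,Y\})=\int_X H_\nu(\eta^{(x)})\,d\mu(x)$ for partitions $\eta$, and a measurable-selection argument as in \cite[Lemma 2.2]{HUANG_YE_ZHANG_2006} upgrades this to $H_{\mu\times\nu}(\mathcal V\,|\,\mathcal B_X\otimes\{\emptyset ,Y\})=\int_X H_\nu(\mathcal V^{(x)})\,d\mu(x)$ for covers, where $\mathcal V^{(x)}$ is the fibre cover and $H_\nu(\mathcal V^{(x)})=\inf_{\beta\succeq\mathcal V^{(x)}}H_\nu(\beta)$. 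Using $\gamma_\phi^{\,i}(x,y)=(\gamma^ix,\phi(\gamma^ix,x)y)$ and the cocycle identity $\phi(x,\gamma^ix)=\phi(\gamma^ix,x)^{-1}$ one gets $(\mathcal U_S)^{(x)}=\bigvee_{i\in S}\phi(x,\gamma^ix)\mathcal U^{(\gamma^ix)}$, so with $\psi(S):=H_{\mu\times\nu}(\mathcal U_S|X)=\int_X H_\nu\big(\bigvee_{i\in S}\phi(x,\gamma^ix)\mathcal U^{(\gamma^ix)}\big)d\mu(x)$ we obtain $\mathrm{Asc}_{\mu\times\nu}^-(\gamma_\phi ,\mathcal U|X)=\lim_n|F_n|^{-1}\sum_{S\subseteq F_n}c^{F_n}_S\,\psi(S)$. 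As in Remark \ref{rmk3.1}(1), the function $F\mapsto\sum_{S\subseteq F}c^F_S\psi(S)$ on $\mathcal F(\mathbb Z)$ is m.n.i.s.a. (monotonicity and non-negativity from Lemma \ref{lem3.1}, $\mathbb Z$-invariance from $\gamma_\phi$-invariance of $\mu\times\nu$ and of the base factor, subadditivity of the conditional entropy of covers), so by Theorem \ref{ow} this limit does not depend on the F{\o}lner sequence and may be computed along intervals $F_n=\{0,\dots ,n-1\}$.

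It remains to compare this limit with $\mathrm{Asc}_\nu^-(\phi ,\mathcal U)=\inf_{\mathcal S\in I(\mathcal R)}\mathrm{Asc}_\nu^-(\mathcal S,\phi ,\mathcal U)$, using the $(B_i,G_i)$-decomposition $\mathcal S\sim(B_i,G_i)$ recalled above, for which $\mathrm{Asc}_\nu^-(\mathcal S,\phi ,\mathcal U)=\sum_i\int_{B_i}\sum_{D\subseteq G_i}c^{G_i}_D H_\nu\big((\mathcal U_D)^{(x)}\big)d\mu(x)$. For $\le$: fix $\mathcal S$ (truncated to finitely many types at a cost $<\varepsilon$ by Proposition \ref{prop4.2}); along any orbit the $\mathcal S$-classes partition $\mathbb Z$, so for each $x$ a large interval $F_n$ is tiled by the classes meeting it up to bounded overhang, and subadditivity of $H_\nu$ with the cocycle identity and the marginalization property of the coefficient system give $\sum_{S\subseteq F_n}c^{F_n}_S H_\nu\big((\mathcal U_S)^{(x)}\big)\le\sum_{j\in F_n}h(\gamma^jx)+o(|F_n|)$ with $h(y)=\sum_i 1_{B_i}(y)\sum_{D\subseteq G_i}c^{G_i}_D H_\nu\big((\mathcal U_D)^{(y)}\big)$; integrating, using $\gamma$-invariance of $\mu$, dividing by $|F_n|$ and letting $n\to\infty$ gives $\mathrm{Asc}_{\mu\times\nu}^-(\gamma_\phi ,\mathcal U|X)\le\int_X h\,d\mu=\mathrm{Asc}_\nu^-(\mathcal S,\phi ,\mathcal U)$, hence $\le\mathrm{Asc}_\nu^-(\phi ,\mathcal U)$. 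For $\ge$: fix $M$ with $\tfrac1M\sum_{D\subseteq M^*}c^M_D H_{\mu\times\nu}(\mathcal U_D|X)\le\mathrm{Asc}_{\mu\times\nu}^-(\gamma_\phi ,\mathcal U|X)+\varepsilon$, take a Rokhlin tower (Lemma \ref{lem4.1}) of height $N\gg M$ with base $D$ and $\mu(\bigsqcup_{j<N}\gamma^jD)>1-\varepsilon$, let $\mathcal S$ be the same-column subrelation, tile each orbit-interval $\mathcal S(x)$ by $\lfloor N/M\rfloor$ blocks of length $M$, and use subadditivity plus the cocycle identity plus the fact that a $c^N$-weighted choice of $S$ induces independent $c^M$-weighted choices on the blocks to get $\mathrm{Asc}_\nu^-(\mathcal S,\phi ,\mathcal U)\le\int_{\bigsqcup_{k<\lfloor N/M\rfloor}\gamma^{kM}D}\Xi_M\,d\mu+O(\varepsilon\log N(\mathcal U))$ with $\Xi_M(y)=\sum_{D'\subseteq M^*}c^M_{D'}H_\nu\big((\mathcal U_{D'})^{(y)}\big)$; a suitable choice of the base $D$ and the ergodic theorem for $\gamma^M$ make the right-hand side tend, as $N\to\infty$, to $\tfrac1M\int_X\Xi_M\,d\mu=\tfrac1M\sum_{D'\subseteq M^*}c^M_{D'}H_{\mu\times\nu}(\mathcal U_{D'}|X)\le\mathrm{Asc}_{\mu\times\nu}^-(\gamma_\phi ,\mathcal U|X)+\varepsilon$, so taking the infimum over $\mathcal S$ and then $\varepsilon\to0$ finishes. (In this step one may first pass to ergodic $\mu$ by the ergodic decomposition, Theorem \ref{thm3.4} and its $-$-analogue, so that $\mathcal I_{\gamma^M}$ is a finite factor and $D$ can be chosen independent of it.)

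The main obstacle is the $\ge$ direction of the last step: one must control the \emph{weighted} sums $\sum_S c^F_S H_{\mu\times\nu}(\mathcal U_S|X)$ — not merely the single quantity $H_{\mu\times\nu}(\mathcal U_F|X)$ — and then arrange that the fibrewise average of $\Xi_M$ over the ``every $M$-th column'' set $\bigsqcup_k\gamma^{kM}D$ of a tall Rokhlin tower reproduces the global average $\tfrac1M\int_X\Xi_M\,d\mu$; this requires a careful choice of the tower base together with the (mean) ergodic theorem, and is where the subadditivity bookkeeping of \cite[Theorem 3.4]{xiao2024pressure} and \cite{li2021dynamical} and the $\subseteq_\varepsilon$-monotonicity of Proposition \ref{prop4.2} are reused. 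The disintegration, the Ornstein--Weiss limit, and the $\le$ inequality are all routine.
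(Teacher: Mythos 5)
Your proposal is correct in outline and is essentially the argument the paper intends: the paper's own proof is only the remark that it is ``similar to \cite{huang2011local}'', and you reconstruct exactly that orbital scheme (disintegration of the conditional quantities over the base factor, the $(B_i,G_i)$-formula, tiling F{\o}lner intervals by $\mathcal{S}$-classes for one inequality, Rokhlin-tower subrelations for the other), with the marginalization property of the coefficient system $c^A_S$ supplying the extra bookkeeping needed for the weighted sums. Two small repairs: the reduction to relations with finitely many types should be justified by the direct estimate that replacing the rare or large classes by singletons changes $\mathrm{Asc}_\nu^-(\mathcal{S},\phi,\mathcal{U})$ by at most $\mu(\text{bad set})\log N(\mathcal{U})$ (Proposition \ref{prop4.2} gives an inequality in the opposite direction), and in the tower step you can avoid the ergodic theorem, the special choice of base, and the non-ergodicity of $\gamma^M$ altogether by averaging the $M$-block decomposition over its $M$ possible offsets inside each column, which bounds the column sum by $\frac{1}{M}\sum_{j<N}\Xi_M(\gamma^j x)+O(M\log N(\mathcal{U}))$ and hence, after integrating over the base, by $\frac{1}{M}\int_X\Xi_M\,d\mu$ directly.
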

\begin{proof}
    The proof is similar to \cite{huang2011local}.
\end{proof}
Combining above theorems in this section, we will obtain the following theorem.
\begin{theorem}\label{thm4.6}
	Let $(Y,\mathcal{B}_Y,\nu,G)$ be a $G$-measure preserving system with $(Y,\mathcal{B}_Y,\nu)$ a Lebesgue space and $\mathcal{U}\in\mathcal{C}_Y$. Then $\mathrm{Asc}_\nu ^-(G,\mathcal{U})=\mathrm{Asc}_\nu ^+(G,\mathcal{U})$.
\end{theorem}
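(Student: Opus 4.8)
The plan is to transport the identity down a chain of reductions that ends in the $\mathbb{Z}$-case already established in Theorem~\ref{thm4.3}. The mechanism is the virtual average sample complexity, which encodes $\mathrm{Asc}_\nu^{\pm}(G,\mathcal{U})$ through the orbit equivalence relation $\mathcal{R}$ of an auxiliary \emph{free} measure-preserving $G$-system $(X,\mathcal{B}_X,\mu,G)$ on a Lebesgue space, together with the cocycle $\phi_G\colon\mathcal{R}\to Aut(Y,\nu)$ determined by $\phi_G(gx,x)=\prod_g$. Since $G$ is amenable, $\mathcal{R}$ is hyperfinite; being also measure-preserving and discrete, it is generated by a single invertible measure-preserving transformation $\gamma$ of $(X,\mu)$, so the $\phi_G$-skew product extension $\gamma_{\phi_G}$ lives on $(X\times Y,\mu\times\nu)$, and together with the projection onto $(X,\gamma)$ it forms a $\mathbb{Z}$-system over a factor --- exactly the setting of Theorem~\ref{thm4.3}.

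First I would record, from Theorem~\ref{thm4.4} and the definition of the virtual average sample complexity, that
$$\mathrm{Asc}_\nu^-(G,\mathcal{U})=\widehat{\mathrm{Asc}}_\nu^-(G,\mathcal{U})=\mathrm{Asc}_\nu^-(\phi_G,X\times\mathcal{U}),\qquad\mathrm{Asc}_\nu^+(G,\mathcal{U})=\widehat{\mathrm{Asc}}_\nu^+(G,\mathcal{U})=\mathrm{Asc}_\nu^+(\phi_G,X\times\mathcal{U}).$$
Applying Theorem~\ref{thm4.5} with $\phi=\phi_G$ and the cover $X\times\mathcal{U}\in\mathcal{C}_{X\times Y}$ then gives
$$\mathrm{Asc}_\nu^{\pm}(\phi_G,X\times\mathcal{U})=\mathrm{Asc}_{\mu\times\nu}^{\pm}\!\left(\gamma_{\phi_G},\,X\times\mathcal{U}\,\bigm|\,\mathcal{B}_X\otimes\{\emptyset,Y\}\right).$$
Finally, since the projection $p\colon X\times Y\to X$ is a factor map from $(X\times Y,\gamma_{\phi_G})$ onto $(X,\gamma)$ with $p^{-1}(\mathcal{B}_X)=\mathcal{B}_X\otimes\{\emptyset,Y\}$, Theorem~\ref{thm4.3} (for the uniform system of coefficients) shows the two right-hand sides coincide, and chaining the three displays yields $\mathrm{Asc}_\nu^-(G,\mathcal{U})=\mathrm{Asc}_\nu^+(G,\mathcal{U})$.

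The step I expect to require the most care is the very first reduction: to invoke Theorems~\ref{thm4.4} and~\ref{thm4.5} in the stated form one must be free to take $(X,\mathcal{B}_X,\mu,G)$ to be a \emph{free} $G$-system, so that $\mathcal{R}$ is genuinely the orbit relation of a free action. This is legitimate because $\widehat{\mathrm{Asc}}_\nu^{\pm}(G,\mathcal{U})$ does not depend on the auxiliary choice (the well-definedness noted after its definition), so one may take, e.g., a Bernoulli $G$-shift. A second, more technical, point will be keeping the coefficient normalizations straight: the passage from the $G$-coefficients $\{c_S^F\}$ to the $\mathbb{Z}$-coefficients $\{c_S^n=2^{-n}\}$ is mediated by identifying orbit segments of $\gamma$ with intervals of $\mathbb{Z}$ along the Ornstein--Weiss filtration $\{\mathcal{R}_n\}$ (cf.\ Remark~\ref{rmk4.2}), which is exactly why the uniform coefficients are fixed throughout; one should also be prepared to pass to a uniquely ergodic topological model of $\gamma_{\phi_G}$ via the relative Jewett--Krieger theorem in order to apply Theorem~\ref{thm4.3} verbatim.
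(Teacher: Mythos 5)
Your proposal is correct and follows essentially the same route as the paper: reduce via Theorem \ref{thm4.4} to the virtual average sample complexity $\mathrm{Asc}_\nu^{\pm}(\phi_G,X\times\mathcal{U})$, transfer via Theorem \ref{thm4.5} to the conditional quantities for the skew product $\gamma_{\phi_G}$ relative to $\mathcal{B}_X\otimes\{\emptyset,Y\}$, and conclude by the $\mathbb{Z}$-case equivalence (Theorem \ref{thm4.3}); you even correct what is evidently a typo in the paper, which cites Theorem \ref{thm4.5} where Theorem \ref{thm4.3} is meant, and your cautionary remarks about freeness of the auxiliary system and the uniform coefficients match the paper's implicit assumptions.
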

\begin{proof}
	Let $(X,\mathcal{B}_X,\mu,G)$ be a free $G$-measure preserving system with $\mathcal{R}\subseteq X\times X$ the $G$-orbit equivalence relation and $\gamma$ an invertible measure-preserving transformation on $(X,\mathcal{B}_X,\mu)$ generating $\mathcal{R}$. Let $\phi_G: \mathcal{R}\to Aut(Y,\nu)$ be a cocycle satisfying $\phi_G(gx,x)=\prod _g$, where $\prod _g\in Aut(Y,\nu)$ the action of $g\in G$ on $(Y,\mathcal{B}_Y,\nu,G)$. By Theorem \ref{thm4.4}, we have
	
	$$\mathrm{Asc}_\nu ^-(G,\mathcal{U})=\widehat{\mathrm{Asc}}_\nu ^-(G,\mathcal{U})=\mathrm{Asc} _\nu ^-(\phi_G ,X\times\mathcal{U})\text{, }\mathrm{Asc}_\nu ^+(G,\mathcal{U})=\widehat{\mathrm{Asc}}_\nu ^-(G,\mathcal{U})=\mathrm{Asc} _\nu ^+(\phi_G ,X\times\mathcal{U}).$$
	By Theorem \ref{thm4.5}, we have 
	$$
	\mathrm{Asc}_\nu ^-(\phi,\mathcal{U})=\mathrm{Asc}_{\mu\times\nu} ^-(\gamma_\phi,\mathcal{U}|\mathcal{B}_X\otimes \{\emptyset ,Y\})\text{, }\mathrm{Asc}_\nu ^+(\phi,\mathcal{U})=\mathrm{Asc}_{\mu\times\nu}^+(\gamma_\phi,\mathcal{U}|\mathcal{B}_X\otimes \{\emptyset ,Y\}).
	$$
	As $\mathcal{B}_X\otimes \{\emptyset ,Y\}$ is $T$-invariant, combining above two equations and Theorem \ref{thm4.5}, we have
	$$
	\mathrm{Asc}_{\mu\times\nu} ^-(\gamma_\phi,\mathcal{U}|\mathcal{B}_X\otimes \{\emptyset ,Y\})=\mathrm{Asc}_{\mu\times\nu}^+(\gamma_\phi,\mathcal{U}|\mathcal{B}_X\otimes \{\emptyset ,Y\}).
	$$
	So $\mathrm{Asc}_\nu ^-(G,\mathcal{U})=\mathrm{Asc}_\nu ^+(G,\mathcal{U})$. We finish the proof.
\end{proof}

\section{A local variational principle of average sample complexity}\label{sec5}

In this section, we aim to present a local variational principle of average sample complexity of amenable group actions. First, we review the theorem on variational principle of average sample complexity.

\begin{theorem}
	(Variational principle of average sample complexity) Let $\mathcal{U}\in\mathcal{C}^o_X$. Then 
	$$
	\mathrm{Asc}_{top} (G,X)=\sup_{\mu\in\mathcal{M}(X,G)}\mathrm{Asc}_\mu (G,X)=\sup_{\mu\in\mathcal{M}^e(X,G)}\mathrm{Asc}_\mu (G,X).
	$$
\end{theorem}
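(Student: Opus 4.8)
The plan is to prove the two inequalities
$$\mathrm{Asc}_{\mathrm{top}}(G,X)\ \le\ \sup_{\nu\in\mathcal{M}^e(X,G)}\mathrm{Asc}_\nu(G,X)\qquad\text{and}\qquad\sup_{\mu\in\mathcal{M}(X,G)}\mathrm{Asc}_\mu(G,X)\ \le\ \mathrm{Asc}_{\mathrm{top}}(G,X);$$
together with the inclusion $\mathcal{M}^e(X,G)\subseteq\mathcal{M}(X,G)$ these squeeze the three quantities in the statement and force them to coincide. The hard analytic work is already available: the first inequality uses only Theorem \ref{thm3.2} and the ergodic decomposition of Theorem \ref{thm3.4}, while the second is a short monotonicity estimate together with the variational principle for the entropy of amenable group actions and the identity $\mathrm{Asc}_{\mathrm{top}}(G,X)=h_{\mathrm{top}}(G,X)$ (established for $\mathbb{Z}$-actions in \cite{petersen2018dynamical} and for amenable groups in \cite{li2021dynamical}). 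I work throughout with the uniform system of coefficients, which is the setting of Theorem \ref{thm3.2}, and I use that, with $Y$ the trivial one-point system, the conditional quantities reduce to their unconditional versions (Remarks \ref{rmk3.1} and \ref{rmk3.2}).

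For the first inequality, fix $\mathcal{U}\in\mathcal{C}^o_X$. By Theorem \ref{thm3.2} there is $\mu\in\mathcal{M}(X,G)$ with $\mathrm{Asc}_{\mathrm{top}}(G,\mathcal{U})\le\mathrm{Asc}_\mu^+(G,\mathcal{U})$. This $\mu$ need not be ergodic, so I would pass to its ergodic decomposition $\mu=\int_\Omega\mu_\omega\,dm(\omega)$ and apply Theorem \ref{thm3.4}, which gives
$$\mathrm{Asc}_\mu^+(G,\mathcal{U})=\int_\Omega\mathrm{Asc}_{\mu_\omega}^+(G,\mathcal{U})\,dm(\omega)\ \le\ \sup_{\nu\in\mathcal{M}^e(X,G)}\mathrm{Asc}_\nu^+(G,\mathcal{U}).$$
Since $\mathrm{Asc}_\nu^+(G,\mathcal{U})=\inf_{\alpha\in\mathcal{P}_X,\ \alpha\succeq\mathcal{U}}\mathrm{Asc}_\nu(G,\alpha)\le\mathrm{Asc}_\nu(G,X)$, taking the supremum over $\mathcal{U}\in\mathcal{C}^o_X$ yields $\mathrm{Asc}_{\mathrm{top}}(G,X)=\sup_{\mathcal{U}}\mathrm{Asc}_{\mathrm{top}}(G,\mathcal{U})\le\sup_{\nu\in\mathcal{M}^e(X,G)}\mathrm{Asc}_\nu(G,X)$.

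For the second inequality, fix $\mu\in\mathcal{M}(X,G)$ and a finite partition $\alpha\in\mathcal{P}_X$. If $S\subseteq F_n$ then $\alpha_{F_n}\succeq\alpha_S$, so by monotonicity of $H_\mu$ under refinement $H_\mu(\alpha_S)\le H_\mu(\alpha_{F_n})$; summing against the coefficients and using $\sum_{S\subseteq F_n}c^{F_n}_S=1$ gives
$$\frac{1}{|F_n|}\sum_{S\subseteq F_n}c^{F_n}_S H_\mu(\alpha_S)\ \le\ \frac{1}{|F_n|}H_\mu(\alpha_{F_n})\ \xrightarrow[\,n\to\infty\,]{}\ h_\mu(G,\alpha),$$
the limit on the right existing by the Ornstein--Weiss lemma (Theorem \ref{ow}). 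Hence $\mathrm{Asc}_\mu(G,\alpha)\le h_\mu(G,\alpha)$; taking the supremum over $\alpha$ gives $\mathrm{Asc}_\mu(G,X)\le h_\mu(G,X)$, and therefore $\sup_{\mu\in\mathcal{M}(X,G)}\mathrm{Asc}_\mu(G,X)\le\sup_{\mu\in\mathcal{M}(X,G)}h_\mu(G,X)=h_{\mathrm{top}}(G,X)=\mathrm{Asc}_{\mathrm{top}}(G,X)$, using the entropy variational principle for amenable groups (e.g.\ \cite{kerr2016ergodic}) and the identity recalled above.

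The only genuinely delicate point is this last inequality, and it is the reason I would not simply copy the ``easy'' direction of the classical entropy variational principle (cf.\ \cite{walters1982introduction}): approximating $\alpha$ from inside by compacts $B_j\subseteq A_j$ and replacing it by an open cover $\mathcal{V}$ introduces, on each $S$, an error of order $|S|\log|\alpha|$, and since the average sample complexity sums over \emph{all} $S\subseteq F_n$ this error is not killed by the factor $1/|F_n|$ and leaves an irreducible positive slack. The monotonicity bound above sidesteps this by reducing directly to the entropy variational principle. If one prefers a self-contained Misiurewicz-type proof, the fix is to observe that the information ``which coordinates of $S$ fall into $B_0$'' is governed by $\mathrm{Asc}_\mu\big(G,\{B_0,X\setminus B_0\}\big)$, which by the same monotonicity is at most $H(\mu(B_0))=-\mu(B_0)\log\mu(B_0)-(1-\mu(B_0))\log(1-\mu(B_0))$ and hence negligible once $\mu(B_0)$ is small; one then runs the argument with the open cover $\{U_j\cup B_0:1\le j\le k\}$, where the $U_j\supseteq B_j$ are open with $U_j\cap B_i=\emptyset$ for $i\ne j$. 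I would present whichever route fits the paper's conventions; all remaining steps are routine.
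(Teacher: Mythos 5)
Your proof is correct, but it takes a partially different route from the paper. The paper's own argument is a two-line reduction to quoted results: by Li--Tu \cite{li2021dynamical} one has both $\mathrm{Asc}_{\mathrm{top}}(G,X)=h_{\mathrm{top}}(G,X)$ and $\mathrm{Asc}_\mu(G,X)=h_\mu(G,X)$ for every $\mu\in\mathcal{M}(X,G)$, and then the variational principle for amenable topological entropy \cite{huang2011local} gives the statement at once. You instead establish the inequality $\mathrm{Asc}_{\mathrm{top}}(G,X)\le\sup_{\nu\in\mathcal{M}^e(X,G)}\mathrm{Asc}_\nu(G,X)$ from the paper's own local machinery, namely Theorem \ref{thm3.2} (for each $\mathcal{U}\in\mathcal{C}^o_X$ some $\mu$ satisfies $\mathrm{Asc}_{\mathrm{top}}(G,\mathcal{U})\le\mathrm{Asc}^+_\mu(G,\mathcal{U})$) combined with the ergodic decomposition of Theorem \ref{thm3.4} and the trivial bound $\mathrm{Asc}^+_\nu(G,\mathcal{U})\le\mathrm{Asc}_\nu(G,X)$, and you obtain the reverse inequality from the elementary monotonicity estimate $\mathrm{Asc}_\mu(G,\alpha)\le\lim_n\frac{1}{|F_n|}H_\mu(\alpha_{F_n})=h_\mu(G,\alpha)$ together with the entropy variational principle and only the topological half of Li--Tu's identity, $\mathrm{Asc}_{\mathrm{top}}(G,X)=h_{\mathrm{top}}(G,X)$. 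What your route buys: the measure-theoretic identity $\mathrm{Asc}_\mu(G,X)=h_\mu(G,X)$ is not needed (replaced by a one-line inequality), and the lower bound comes from results proved inside the paper, giving the extra information that for each fixed open cover the topological quantity is dominated by $\mathrm{Asc}^+_\nu(G,\mathcal{U})$ of some measure, which the paper only exploits later for the local version. What it costs: the appeal to Theorem \ref{thm3.2} ties your argument to the uniform system of coefficients, whereas the paper's reduction holds for whatever coefficient class the quoted identities of \cite{li2021dynamical} cover, and you still import two external ingredients (the identity $\mathrm{Asc}_{\mathrm{top}}=h_{\mathrm{top}}$ and the amenable entropy variational principle), so neither proof is self-contained. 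Your closing discussion of why a naive Misiurewicz-type approximation by open covers fails is a fair observation, but it is not needed once the upper bound is reduced, as you do, to the entropy variational principle.
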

\begin{proof}
	Li et al.(Theorem 3.6, \cite{li2021dynamical}) showed that for each $\mu\in\mathcal{M}(X,G)$, 
	\begin{equation}\label{eq6.4}
		\mathrm{Asc}_{top} (G,X)=h_{top} (G,X)\text{ and }\mathrm{Asc}_\mu (G,X)=h_\mu (G,X).
	\end{equation}
	Huang et al. (Theorem 5.1, \cite{huang2011local}) give the variational principle of topological entropy. That is,
	\begin{equation}\label{eq6.2}
		h _\mathrm{top}(G,X)=\sup_{\mu\in\mathcal{M}(X,G)}h_\mu (G,X)=\sup_{\mu\in\mathcal{M}^e(X,G)}h_\mu (G,X).
	\end{equation}
	Combining with Equations \eqref{eq6.4} and \eqref{eq6.2}, we prove the theorem.
	
\end{proof}

By Theorem \ref{thm4.6}, we define $\mathrm{Asc}_\nu (G,\mathcal{U}):=\mathrm{Asc}_\nu ^-(G,\mathcal{U})=\mathrm{Asc}_\nu ^+(G,\mathcal{U})$.
\begin{theorem}
	(Local variational principle of average sample complexity) Let $\mathcal{U}\in\mathcal{C}^o_X$ and $\{c^{F_n}_S=2^{-|F_n|}: S\subseteq F_n\}$ a system of coefficients. Then 
	$$
	\mathrm{Asc}_{top} (G,\mathcal{U})=\max_{\mu\in\mathcal{M}(X,G)}\mathrm{Asc}_\mu (G,\mathcal{U})=\max_{\mu\in\mathcal{M}^e(X,G)}\mathrm{Asc}_\mu (G,\mathcal{U}).
	$$
\end{theorem}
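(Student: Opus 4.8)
The plan is to obtain the statement by chaining together facts already established: the bound in Lemma~\ref{lem3.1}, the ``hard direction'' of Theorem~\ref{thm3.2}, the equivalence $\mathrm{Asc}_\nu^-=\mathrm{Asc}_\nu^+$ of Theorem~\ref{thm4.6}, and the ergodic decomposition of Theorem~\ref{thm3.4}. Throughout one takes $(Y,G)$ to be the trivial system, so that $\mathrm{Asc}_{*}(G,\mathcal{U}|Y)=\mathrm{Asc}_{*}(G,\mathcal{U})$, and one works with the uniform coefficients $\{c^{F_n}_S=2^{-|F_n|}\}$ prescribed in the hypothesis, which is precisely the regime in which Theorems~\ref{thm3.2} and~\ref{thm4.6} apply.

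First I would prove the easy inequality $\mathrm{Asc}_\mu(G,\mathcal{U})\le \mathrm{Asc}_{\mathrm{top}}(G,\mathcal{U})$ for every $\mu\in\mathcal{M}(X,G)$. By Lemma~\ref{lem3.1}(1) one has $H_\mu(\mathcal{U}_S)\le \log N(\mathcal{U}_S)$ for each $S\subseteq F_n$; multiplying by $c^{F_n}_S$, summing over $S\subseteq F_n$, dividing by $|F_n|$ and letting $n\to\infty$ yields $\mathrm{Asc}_\mu^-(G,\mathcal{U})\le\mathrm{Asc}_{\mathrm{top}}(G,\mathcal{U})$. Since $(X,\mathcal{B}(X),\mu,G)$ is a $G$-measure preserving system on a Lebesgue space, Theorem~\ref{thm4.6} gives $\mathrm{Asc}_\mu(G,\mathcal{U})=\mathrm{Asc}_\mu^-(G,\mathcal{U})$, and the bound follows. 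Hence $\sup_{\mu\in\mathcal{M}(X,G)}\mathrm{Asc}_\mu(G,\mathcal{U})\le\mathrm{Asc}_{\mathrm{top}}(G,\mathcal{U})$, and a fortiori the same holds with the supremum restricted to $\mathcal{M}^e(X,G)$.

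Next I would apply Theorem~\ref{thm3.2} (with $Y$ trivial) to produce $\mu_0\in\mathcal{M}(X,G)$ with $\mathrm{Asc}_{\mathrm{top}}(G,\mathcal{U})\le\mathrm{Asc}_{\mu_0}^+(G,\mathcal{U})$, and then Theorem~\ref{thm4.6} once more to rewrite the right-hand side as $\mathrm{Asc}_{\mu_0}(G,\mathcal{U})$. Combined with the previous paragraph this forces $\mathrm{Asc}_{\mu_0}(G,\mathcal{U})=\mathrm{Asc}_{\mathrm{top}}(G,\mathcal{U})$, so the supremum over $\mathcal{M}(X,G)$ is attained at $\mu_0$ and the first equality with ``$\max$'' holds. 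For the ergodic version, let $\mu_0=\int_\Omega(\mu_0)_\omega\,dm(\omega)$ be the ergodic decomposition of $\mu_0$. Theorem~\ref{thm3.4} gives $\mathrm{Asc}_{\mu_0}^+(G,\mathcal{U})=\int_\Omega\mathrm{Asc}_{(\mu_0)_\omega}^+(G,\mathcal{U})\,dm(\omega)$, and Theorem~\ref{thm4.6} applied to $(\mu_0)_\omega$ identifies each $\mathrm{Asc}_{(\mu_0)_\omega}^+(G,\mathcal{U})$ with $\mathrm{Asc}_{(\mu_0)_\omega}(G,\mathcal{U})$. Since every ergodic component satisfies $\mathrm{Asc}_{(\mu_0)_\omega}(G,\mathcal{U})\le\mathrm{Asc}_{\mathrm{top}}(G,\mathcal{U})$ by the second paragraph, while the integral equals $\mathrm{Asc}_{\mathrm{top}}(G,\mathcal{U})$, one gets $\mathrm{Asc}_{(\mu_0)_\omega}(G,\mathcal{U})=\mathrm{Asc}_{\mathrm{top}}(G,\mathcal{U})$ for $m$-a.e.\ $\omega$. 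Any such component is ergodic and attains the maximum, which (together with the trivial inequality $\sup_{\mathcal{M}^e}\le\sup_{\mathcal{M}}$ and its reverse coming from the same decomposition) yields the second equality.

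The main obstacle is not in this final assembly but in lining up the hypotheses of the cited results: one must make sure Theorem~\ref{thm3.2} is genuinely available for the uniform coefficient system used here, that the equivalence of Theorem~\ref{thm4.6}---which rests on the orbital and virtual average sample complexity machinery---applies both to $\mu_0$ and to its ergodic components (all of which are $G$-invariant Borel probability measures on the compact metric space $X$, hence carry Lebesgue structure), and that Theorem~\ref{thm3.4} is being combined with Theorem~\ref{thm4.6} without circularity. Once these compatibility points are checked, the variational principle drops out by the chain of inequalities above.
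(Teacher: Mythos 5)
Your proposal is correct and follows essentially the same route as the paper: the upper bound via $H_\mu(\mathcal{U}_S)\le\log N(\mathcal{U}_S)$ together with Theorem \ref{thm4.6}, the attainment via Theorem \ref{thm3.2} (with $Y$ trivial and uniform coefficients), and the ergodic version via the decomposition formula of Theorem \ref{thm3.4}. Your write-up is in fact slightly more explicit than the paper's (which cites Theorem \ref{thm4.6} for the easy inequality without spelling out Lemma \ref{lem3.1}(1)), but the argument is the same.
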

\begin{proof}
	By Theorem \ref{thm4.6}, we have $ \mathrm{Asc}_{top} (G,\mathcal{U})\ge\mathrm{Asc}_\mu (G,\mathcal{U})$ for each $\mu\in\mathcal{M}(X,G)$. By Theorem \ref{thm3.2}, there exists $\mu\in\mathcal{M}(X,G)$ such that $ \mathrm{Asc}_{top} (G,\mathcal{U})\le\mathrm{Asc}_\mu (G,\mathcal{U})$.
	So $$\mathrm{Asc}_{top} (G,\mathcal{U})=\max_{\mu\in\mathcal{M}(X,G)}\mathrm{Asc}_\mu (G,\mathcal{U}) .$$
	Now we  prove  that there exists $\nu\in\mathcal{M}^e(X,G)$ such that $ \mathrm{Asc}_{top} (G,\mathcal{U})\le\mathrm{Asc}_\nu (G,\mathcal{U})$.
	
	Let $\mu=\int_{\mathcal{M}^e(X,G)}\mu_\omega dm(\omega )$ be the ergodic decomposition of $\mu$. By Theorem \ref{thm3.4}, we have 
	$$\int\limits_{\mathcal{M}^e(X,G)}\mathrm{Asc}_{\mu_\omega }(G,\mathcal{U})dm(\omega )=\mathrm{Asc}_\mu (G,\mathcal{U})\ge \mathrm{Asc}_{top} (G,\mathcal{U}).$$
	So $\mathrm{Asc}_{top} (G,\mathcal{U})\le\mathrm{Asc}_{\mu_\omega }(G,\mathcal{U})$ for some ${\mu_\omega }\in\mathcal{M}^e(X,G)$. Then the proof is completed.

\end{proof}

%The acknowledgments section should not be numbered.
\section*{Acknowledgments}
We would like to thank Prof. Zhao Yun for his useful suggestions and instructions. The corresponding author's research was supported by the National Natural Science Foundation of China (12201120) and the visiting fellowships supported by Fujian Alliance of Mathematics.
\section*{\textbf{Conflict of interests}}%
On behalf of all authors, the corresponding author states that there is no conflict of interests.

%%%%%%%%%%%%%%%%%%%%%%%%%%%%%%%%%%%%%%%%%%%%%%%%%%%%%%
%          7. REFERENCES SECTION
%%%%%%%%%%%%%%%%%%%%%%%%%%%%%%%%%%%%%%%%%%%%%%%%%%%%%%

%       READ THIS SECTION CAREFULLY

% Each of the references below MUST be cited in your article above. Do not include references that are not cited in your article.

% Follow the examples below carefully. We strongly suggest that you copy and paste your reference information directly into our examples.

% List all references in alphabetical order according to the first author's last name.

% Verify each URL works correctly and can be accessed properly. Your URL links should be to reputable websites. The command line for a website link begins with: \url{ }

% Do not add MR or DOI numbers to your references. AIMS production staff will add this information.

% Using BibTex is not recommended but can be handled.

\end{document}